\theoremstyle{plain}
\newtheorem{thm}{Theorem}[section]
\newtheorem{lemma}[thm]{Lemma}
\newtheorem{prop}[thm]{Proposition}
\newtheorem{cor}[thm]{Corollary}
\theoremstyle{definition}
\newtheorem{dfn}[thm]{Definition}
\newtheorem{eg}[thm]{Example}
\newtheorem{notation}[thm]{Notation}
\newtheorem{obs}[thm]{Observation}
\def\Z{{\mathbb Z}}
\def\O{{\mathcal O}}
\def\P{{\mathcal P}}
\def\2{{\boldsymbol 2}}
\def\q{{\mathbf q}}
\newcommand{\union}{\cup}
\newcommand{\intersect}{\cap}
\newcommand{\ov}{\overline}
\newcommand{\inv}{^{-1}}
\newcommand{\spec}{\operatorname{spec}}
\newcommand{\Aut}{\operatorname{Aut}}
\newcommand{\drank}{\delta\operatorname{-rank}}
\begin{document}

\title[Poset structure of torus-invariant prime spectra of CGL extensions]
{Poset structure of torus-invariant prime spectra of CGL extensions}

\author[Christopher Nowlin]{Christopher Nowlin}
\begin{abstract}
A key theorem of Yakimov's proves that the torus-invariant prime spectra of De Concini-Kac-Procesi algebras are isomorphic as partially ordered sets to corresponding Bruhat order intervals of Weyl groups.  We present examples of more general Cauchon-Goodearl-Letzter (CGL) extensions which exhibit this same phenomenon.  To accomplish this, we develop a procedure for iteratively constructing poset isomorphisms between torus-invariant prime spectra of CGL extensions and Bruhat order intervals of Coxeter groups.
\end{abstract}
\maketitle

\section*{Introduction}

Fix a field $k$, a positive integer $n$, and $q$ a nonzero, non-root of unity in $k$.

The algebra of quantum matrices, denoted $\O_q(M_n(k))$ has been the subject of much study as one of the primary examples of quantized coordinate rings.  In particular, a description of its prime spectrum remains a subject of interest.  The quantum matrix algebra is a $q$-skew polynomial algebra.  Loosely, this means that it resembles an algebra of polynomials over a field, but the variables do not commute; rather, they ``almost commute'' in such a way that every polynomial can be expressed uniquely as a linear combination of ordered monomials.

Moreover, $\O_q(M_n(k))$ comes equipped with a suitable action of an algebraic torus for which this algebra is a Cauchon-Goodearl-Letzter (CGL) extension (see \cite{LLR06}).  We concern ourself here with the general study of CGL extensions.

Additionally, $\O_q(M_n(k))$, along with many other important examples of CGL extensions, is  realizable as a De Concini-Kac-Procesi \cite{DKP95} algebra.  Given a word $w$ in a Weyl group $W$, one defines a De Concini-Kac-Procesi algebra as a subalgebra of the associated quantized enveloping algebra $U_q^{w,\pm}\subseteq U_q(\mathfrak g)$.  We are not here concerned with whether we look in the positive or negative quantum Borel algebra, and so use the simplified notation $U_q^w$.  All De Concini-Kac-Procesi algebras are CGL extensions (see \cite{M09}), but the reverse does not hold.  We discusss examples of such CGL extensions in this paper.

The stratification theory of Goodearl-Letzter \cite{GL00} tells us that the prime and primitive spectra of many ``quantum algebras'', including CGL extensions, can be understood through understanding the collection of prime ideals invariant under the action of an algebraic torus.  In particular, given any CGL extension, the collection of torus-invariant prime ideals is a finite partially ordered set, and we demonstrate the underlying poset is an invariant of the filtered algebra (see Propositions~\ref{prop:independentoftorus} and~\ref{prop:independt2}).

For the algebra of $\O_q(M_n(k))$, Cauchon used the deleting-derivation homomorphism to demonstrate a bijection between its invariant prime spectrum and combinatorial objects known as Cauchon diagrams \cite{C03}.  Launois gave an explicit description of the spectrum as a partially ordered set \cite{L07}.  This work was generalized by Yakimov; for any word $w\in W$, a Weyl group, Yakimov proved $\spec_HU_q^w\cong W^{\leq w}$ \cite{Y09}.  Moreover, he gave an explicit description of every torus-invariant prime ideal of $U_q^w$.

Cauchon and Meriaux take a more combinatorial approach to the problem, using Cauchon's derivation-deleting algorithm to line up torus-invariant prime ideals in De Concini-Kac-Procesi algebras with more general Cauchon diagrams which could be realized as subsets of the positive roots corresponding to the chosen word \cite{MC09}.

In this paper, we establish a combinatorial procedure for producing poset isomorphisms between torus-invariant prime spectra of CGL extensions and Bruhat order intervals of Coxeter groups.

Weyl groups are the finite Kac-Moody groups, which in turn fall under the general heading of Coxeter groups, a primary object of study in geometric group theory.  It is in the context of Coxeter groups that we most naturally see the Bruhat ordering.  Coxeter groups are realizable as groups generated by reflections of a real inner-product space.  Once a generating set has been fixed, the length of an element is defined; a reflection in a Coxeter group is a conjugate of a generator, and given an element $w$ and reflection $t$, we consider the length of $wt$, and write $w<wt$ if the length of $wt$ is greater than the length of $w$.  By closing this relation under transitivity and reflexivity, we construct a partial ordering, the Bruhat ordering, and consider Bruhat order intervals $W^{\leq w}:=[1,w]$.

Let $S=R[x;\tau,\delta]$ be a Cauchon extension, as defined in \cite{LLR06}(see Definition~\ref{dfn:CauchonExt}) and $L=S[x\inv]$.  Cauchon constructed a deleting-derivation map \cite{C03}, an isomorphism $C:R[y^{\pm1};\tau]\to L$.  This map induces a poset isomorphism $\tilde C:\spec_HR\to(\spec_HS)_{\not\ni x}$, the subset of the $H$-prime spectrum of $S$ consisting of the ideals which do not contain $x$.  We consider the following subsets of $\spec_HR$: $$(\spec_HR)_{\supseteq\delta(R)}:=\{P\in\spec_HR:\delta(R)\subseteq P\}$$ and $$(\spec_HR)_{\delta}:=\{P\in\spec_HR:\delta(P)\subseteq P\}.$$  We then see the following chain of inclusions:

$$(\spec_HR)_{\supseteq\delta(R)}\subseteq(\spec_HR)_\delta\subseteq\spec_HR\hookrightarrow\spec_HS.$$

We set $$\P_3=(\spec_HR)_{\supseteq\delta(R)},\ \P_2=(\spec_HR)_\delta\setminus\P_3,\text{ and }\P_1=\spec_HR\setminus(\P_2\cup\P_3).$$

Let $W$ be a Coxeter group and $\ov w\in W$ such that there exists an isomorphism $\nabla:\spec_HR\cong W^{\leq\ov w}$.  Suppose $a$ is a generator of $W$ such that $\ov w\leq\ov wa$.  Set $$W_a=\{w\in W:wa\leq w\}\text{ and }W_a'=\{w\in W:w\leq wa\}.$$

Set $W_3=\{w\in W^{\leq\ov w}:wa\not\leq\ov w\}$.  Consider the following chain of inclusions:

$$W_3\subseteq W_a'\cap W^{\leq\ov w}\subseteq W^{\leq\ov w}\subseteq W^{\leq\ov wa}.$$

Lining up the partially ordered sets arising from Coxeter groups and invariant prime spectra amounts to lining up the two chains of inclusions above.

Set $$W_2=W_a'\cap W^{\leq\ov w}\setminus W_3\text{ and }W_1=W_a\cap W^{\leq\ov w}.$$

We see in Theorem~\ref{thm:main} the following:

\begin{quote}
If $\nabla(W_i)=\P_i$ for $i=1,2,3$, then there exists an isomorphism $$\tilde\nabla:W^{\leq\ov wa}\to\spec_HS.$$
\end{quote}

We give an explicit construction of this isomorphism and demonstrate its compatibility with the following two projection maps.

There is a canonical contraction map $\tilde\Psi:\spec_HS\to\spec_HR$ given by $P\mapsto P\cap R$, and a projection map $\Phi:W^{\leq\ov wa}\to W^{\leq w}$ given by $$w\mapsto\begin{cases}
w,&w\leq wa\\
wa,&wa\leq w.
\end{cases}.$$

We see the following diagram commutes.

\begin{center}
\begin{tikzpicture}
\node(1) at (0,0){$W^{\leq\ov wa}$};
\node(2) at (3,0){$\spec_HS$};
\node(3) at (0,-2){$W^{\leq\ov w}$};
\node(4) at (3,-2){$\spec_HR$};
\draw[->](1)--node[auto]{$\tilde\nabla$}(2);
\draw[->](1)--node[auto]{$\Phi$}(3);
\draw[->](2)--node[auto]{$\tilde\Psi$}(4);
\draw[->](3)--node[auto]{$\nabla$}(4);
\end{tikzpicture} 

\end{center}

In Section~\ref{section:applications}, we apply Theorem~\ref{thm:main} to many classical examples of De Concini-Kac-Procesi algebras, as well as more general CGL extensions.

This work formed a portion of the author's PhD dissertation at the University of California, Santa Barbara.

\vspace{.5 cm}

\noindent{\bf Acknowledgements.}
The author would like to acknowledge Ken Goodearl for helpful suggestions.

\section{Background and notation}

Fix a field $k$ and $q\in k^\times$.  Set $\hat q=q-q\inv$.  We use $n$ to denote a positive integer.

A \emph{Coxeter group} is a group with presentation $\langle s_1,\dots,s_n:(s_is_j)^{m_{ij}}=1\rangle$ where $m_{ii}=1$ for all $i$ and $2\leq m_{ij}=m_{ji}$ for all $i\neq j$.  We refer to the matrix $m=(m_{ij})$ as the \emph{Coxeter matrix}.  We will abuse notation and refer to a Coxeter group as the group with respect to a fixed generating set.

There is a standard ordering on Coxeter groups known as the Bruhat order, which we will denote $\leq$.  We recall some key properties.  The \emph{chain property} tells us that Bruhat posets are graded by length.  The \emph{subword property} tells us that $w'<w$ in the Bruhat order precisely when $w'$ can be expressed as a (not necessarily contiguous) subword of $w$.  The \emph{lifting property} tells us the following: Let $a$ be a generator for $W$ and $w< w'\in W$ with $w<wa$ and $w'a<w'$; then $w\leq w'a$ and $wa\leq w'$.  We will need many technical lemmas regarding Bruhat order intervals, most of which are straightforward applications of the above properties.  See for example \cite{BB05} for details.

Weyl groups are a special subset of Coxeter groups indexed by Dynkin diagrams.  We recall and fix a presentation of the groups of types $A_n$ and $D_{n+1}$ here, as they will be useful in this paper.  The Coxeter matrix is encoded in the Dynkin diagram by letting $m_{ij}=2$ when the nodes corresponding to $s_i$ and $s_j$ are not connected by an edge and letting $m_{ij}=3$ when they are.  The Coxeter group of type $A_n$ will have generating set denoted $s_1,\dots,s_n$ and relations encoded by the Dynkin diagram:\begin{center}
\begin{tikzpicture}
[vertex/.style={circle,draw,inner sep=0 pt, minimum size=5}]
\node(1) [vertex]{};
\node(2) [vertex] [right=of 1]{};
\node(3) [vertex] [right=of 2]{};
\node(n-1) [vertex][right=1.5 cm of 3]{};
\node(n) [vertex][right=of n-1]{};
\draw(1)--(2);
\draw(2)--(3);
\draw(n-1)--(n);
\draw[loosely dashed] (3)--(n-1);
\end{tikzpicture}
\end{center}  The Coxeter group of type $D_{n+1}$ will have generating set $s_1,\dots,s_{n+1}$ and relations encoded by:
\begin{center}
\begin{tikzpicture}
[vertex/.style={circle,draw,inner sep=0 pt, minimum size=5}]
\node(2) [vertex][label=above:$s_3$]{};
\node(0) [vertex] [above left=of 2,label=right:$s_1$]{};
\node(1) [vertex] [below left=of 2,label=right:$s_2$]{};
\node(3) [vertex][right=of 2,label=above:$s_4$]{};
\node(n) [vertex][right= 1.5 cm of 3,label=above:$s_{n+1}$]{};
\
\draw(0)--(2);
\draw(1)--(2);
\draw(2)--(3);
\draw[loosely dashed] (3)--(n);
\end{tikzpicture} 
\end{center}

We assume familiarity with noetherian ring theory and quantum algebras as can be found in \cite{GW89} and Chapters I and II of \cite{BG02}.  We summarize briefly important later results and definitions extending the work summarized in \cite{BG02}, specifically from \cite{C03} and \cite{LLR06}.

\begin{dfn}Given a noetherian algebra $R$, we say an Ore extension $S=R[x;\tau,\delta]$ is a \emph{Cauchon extension} of $R$ if the following are satisfied:
\begin{enumerate}[(i)]
\item $\delta$ is locally nilpotent.
\item $\delta\tau=q\tau\delta$ with $q$ not a root of unity.
\item There is an abelian group $H$ acting on $S$
by algebra automorphisms.
\item $R$ is $H$-stable and $x$ is an $H$-eigenvector.
\item There exists $h_0\in H$ such that $h_0|_R=\tau$ and the
$h_0$-eigenvalue of $x$ is not a root of unity.
\item Every $H$-prime ideal of $R$ is completely prime.
\end{enumerate}\label{dfn:CauchonExt}
\end{dfn}

\begin{dfn} A \emph{Cauchon-Goodearl-Letzter (CGL) extension} is an iterated Ore extension of the form
$S=k[x_1][x_2;\tau_2,\delta_2]\cdots[x_n;\tau_n,\delta_n]$ satisfying the following conditions, where $R_1=k[x_1]$, and $R_j=R_{j-1}[x_j;\tau_j,\delta_j]$:\begin{enumerate}[(a)]
\item Each $\delta_j$ is locally nilpotent and there exist nonroots of unity $q_j\in k^\times$ with $\delta_j\tau_j=q_j\tau_j\delta_j$.
\item There is a torus $H$ acting rationally on $S$ by algebra automorphisms.
\item For $i=1\dots n$, $x_i$ is an $H$-eigenvector.
\item There exist elements $h_1,\dots,h_n\in H$ such that $h_j(x_i)=\tau_j(x_i)$ for $i<j$ and the $h_j$-eigenvalue of $x_j$ is not a root of unity.
\end{enumerate}

\label{dfn:CGL}
\end{dfn}

\begin{eg}Given a word $w$ in a Weyl group $W$, there exists a De Concini-Kac-Procesi algebra denoted $U_q^w$, a subalgebra of the quantized enveloping algebra corresponding to $W$.  Every $U_q^w$ is a CGL-extension.\cite{M09}

\end{eg}

\begin{thm}[\cite{C03}]Let $S=R[x;\tau,\delta]$ be a Cauchon extension.  Then
there exist nonzero scalars $q_0,q_1,\dots, $ given by $$q_l=q^{l(l+1)/2}\frac{(q-1)^{-l}}{(\ov l)!_q},$$ so that there exists an algebra homomorphism $C:R\to S[x\inv]$ given by
$$r\mapsto\sum_{l=0}^\infty q_l\delta^l\tau^{-l}(r)x^{-l},$$ which extends to an isomorphism $R[y^{\pm 1};\tau]\to
S[x\inv]$ where $y\mapsto x.$

\label{thm:Cauchon}\end{thm}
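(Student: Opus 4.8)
The plan is to prove existence of $C$ and the isomorphism property separately, relying only on conditions (i) and (ii) of Definition~\ref{dfn:CauchonExt}: local nilpotence of $\delta$ and the commutation $\delta\tau=q\tau\delta$ with $q$ not a root of unity. First I would record the elementary consequences of (ii): iterating the relation gives $\tau\delta^l=q^{-l}\delta^l\tau$ and $\delta^l\tau^{-m}=q^{-lm}\tau^{-m}\delta^l$, so in particular $\delta^l\tau^{-l}(r)=q^{-l^2}\tau^{-l}\delta^l(r)$. Since $\delta$ is locally nilpotent this vanishes for $l\gg 0$, so the series defining $C(r)$ is a finite sum in $S[x\inv]$; the hypothesis that $q$ is not a root of unity guarantees each $q$-factorial $(\ov l)!_q$ is nonzero, so each $q_l$ is a well-defined nonzero scalar (and $q_0=1$).

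Second, I would establish the $q$-Leibniz rule
$$\delta^n(ab)=\sum_{k=0}^n\binom{n}{k}_q\bigl(\tau^k\delta^{n-k}(a)\bigr)\bigl(\delta^k(b)\bigr),\qquad \binom{n}{k}_q=\frac{(\ov n)!_q}{(\ov k)!_q\,(\ov{n-k})!_q},$$
by induction on $n$, the inductive step using $\delta\tau^k=q^k\tau^k\delta$ together with the $q$-Pascal identity $\binom{n+1}{k}_q=q^k\binom{n}{k}_q+\binom{n}{k-1}_q$. This is the combinatorial engine behind the whole statement.

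Third comes the heart of the argument: showing that $C$ is an algebra homomorphism, $C(ab)=C(a)C(b)$, and that it is compatible with the skew relation, $xC(r)=C(\tau(r))x$. For the latter I would expand $xC(r)$ using $xa=\tau(a)x+\delta(a)$ and reindex; matching coefficients against $C(\tau(r))x$ forces the recursion $q_{m-1}=q_m(1-q^{-m})$, equivalently $q_m/q_{m-1}=q^m/(q^m-1)$, which is exactly the ratio produced by the closed form for $q_l$. For multiplicativity I would substitute the $q$-Leibniz expansion into $C(ab)=\sum_n q_n\delta^n\tau^{-n}(ab)x^{-n}$, move every $\tau$ and $\delta$ to the correct side via the commutation relations, and regroup into a product of two series; the Gaussian binomials $\binom{n}{k}_q$ are then precisely absorbed by the ratios $q_iq_j/q_{i+j}$, leaving $C(a)C(b)$. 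I expect this bookkeeping — tracking the $q$-powers generated by the various commutations so that the $q$-binomials cancel — to be the main obstacle. Granting these two facts, the universal property of the skew-Laurent ring $R[y^{\pm1};\tau]$ yields a unique algebra homomorphism $\tilde C$ with $\tilde C|_R=C$ and $\tilde C(y)=x$, since $x$ is a unit of $S[x\inv]$ satisfying the required relation.

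Finally I would prove $\tilde C$ bijective by a triangularity argument. Writing $S[x\inv]=\bigoplus_{m\in\Z}Rx^m$, one has $\tilde C(\sum_i r_iy^i)=\sum_{i}\sum_{l\ge0}q_l\delta^l\tau^{-l}(r_i)x^{i-l}$, and the coefficient of the top power $x^N$, where $N$ is the largest index with $r_N\neq0$, is exactly $q_0r_N=r_N$; hence a nonzero element has nonzero image, giving injectivity. For surjectivity it suffices to show $R\subseteq\operatorname{im}\tilde C$, since $x^{\pm1}=\tilde C(y^{\pm1})$ lie in the image and the image is a subring. I would induct on the least $m$ with $\delta^m(r)=0$: the base case $\delta(r)=0$ gives $C(r)=r$, and in general the identity $r=C(r)-\sum_{l\ge1}q_l\delta^l\tau^{-l}(r)x^{-l}$ expresses $r$ through $C(r)=\tilde C(r)$ and terms whose left factors $\delta^l\tau^{-l}(r)=q^{-l^2}\tau^{-l}\delta^l(r)$ have strictly smaller $\delta$-degree, hence lie in the image by the inductive hypothesis. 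This closes the argument.
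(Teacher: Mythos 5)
Your proposal targets a statement the paper never proves: Theorem~\ref{thm:Cauchon} is quoted as background, with the proof deferred entirely to Cauchon's paper \cite{C03}, so there is no internal argument to compare against. What you have written is in effect a reconstruction of Cauchon's original argument, and it is essentially correct. The key computations check out: equating $xC(r)$ with $C(\tau(r))x$ does force $q_{l-1}=q_l(1-q^{-l})$, which together with $q_0=1$ is solved by the stated closed form (all $q_l$ nonzero because $q$ is not a root of unity); the $q$-Leibniz rule with the Pascal identity $\binom{n+1}{k}_q=q^k\binom{n}{k}_q+\binom{n}{k-1}_q$ is the right engine; the identity that makes multiplicativity work, namely $q_{n-k}\,q_k=q_n\binom{n}{k}_q\,q^{-k(n-k)}$, does hold for these scalars; the universal property of $R[y^{\pm1};\tau]$ gives the extension; and your leading-coefficient injectivity argument and induction-on-$\delta$-nilpotence surjectivity argument are both sound (the latter is exactly how one recovers $R$ inside the subalgebra generated by $C(R)$ and $x^{\pm1}$).

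One step needs more care than your sketch gives it. In the multiplicativity argument you propose to ``move every $\tau$ and $\delta$ to the correct side via the commutation relations, and regroup into a product of two series,'' but the relation $\delta\tau=q\tau\delta$ alone does not let you move $x^{-i}$ past an arbitrary element of $R$: in $S[x\inv]$ one has only $x\inv r=\tau\inv(r)x\inv-x\inv\delta(\tau\inv(r))x\inv$, which spawns correction terms. The fix is to exploit the order you already chose: having proved the skew relation first, apply it to the whole series to get $x^{-i}C(b)=C(\tau^{-i}(b))x^{-i}$, whence $C(a)C(b)=\sum_{i,j}q_iq_j\,\delta^i\tau^{-i}(a)\,\delta^j\tau^{-i-j}(b)\,x^{-(i+j)}$, and then compare coefficients with the Leibniz expansion of $C(ab)$ using the identity above (noting $\tau^k\delta^{n-k}\tau^{-n}=q^{-k(n-k)}\delta^{n-k}\tau^{-(n-k)}$). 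With that made explicit, your outline closes.
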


\begin{thm}[\cite{Y09}]Let $W$ be a Weyl group.  For $w\in W$, $\spec_H(U_q^w)\cong W^{\leq w}$
as partially ordered sets.\label{thm:Yaki}
\end{thm}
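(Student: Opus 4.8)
The plan is to prove the isomorphism by induction on the length $N=\ell(w)$, using the CGL extension structure of $U_q^w$ together with Cauchon's deleting-derivation isomorphism (Theorem~\ref{thm:Cauchon}) to peel off one generator at a time, and then the gluing mechanism of Theorem~\ref{thm:main} to reassemble the poset isomorphism at each stage. The guiding idea is that one step of the induction adjoins exactly one De Concini-Kac-Procesi root vector to the algebra while, on the Coxeter side, appending exactly one generator to the Weyl group element, so the two constructions run in lockstep.

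First I would fix a reduced expression $w=s_{i_1}\cdots s_{i_N}$ and recall the standard fact (\cite{M09}) that the corresponding convex ordering of De Concini-Kac-Procesi root vectors presents $U_q^w$ as an iterated Ore extension whose top variable $x$ is the last root vector; writing $w'=s_{i_1}\cdots s_{i_{N-1}}$, this exhibits $U_q^w=U_q^{w'}[x;\tau,\delta]$. One checks that this is a Cauchon extension in the sense of Definition~\ref{dfn:CauchonExt}: local nilpotence of $\delta$ and the $q$-commutation $\delta\tau=q\tau\delta$ are part of the CGL data of Definition~\ref{dfn:CGL}, the torus action restricts appropriately, and every $H$-prime of $U_q^{w'}$ is completely prime. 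The base case $N=0$ is trivial, since $U_q^1=k$ has the single $H$-prime $0$ and $W^{\leq1}=\{1\}$ is a point.

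For the inductive step, assume $\nabla\colon W^{\leq w'}\cong\spec_H U_q^{w'}$ and set $a=s_{i_N}$, so that $w=w'a$ with $w'\leq w'a$. Theorem~\ref{thm:Cauchon} provides the deleting-derivation isomorphism $U_q^w[x^{-1}]\cong U_q^{w'}[y^{\pm1};\tau]$, whose induced map $\tilde C$ identifies $\spec_H U_q^{w'}$ with the $H$-primes of $U_q^w$ not containing $x$; the remaining $H$-primes, those containing $x$, must be matched with the elements of $W^{\leq w}\setminus W^{\leq w'}$. I would then partition $\spec_H U_q^{w'}$ into the pieces $\P_1,\P_2,\P_3$ recording whether a prime $P$ is not $\delta$-stable, is $\delta$-stable but does not contain $\delta(U_q^{w'})$, or contains $\delta(U_q^{w'})$; dually I partition $W^{\leq w'}$ into $W_1,W_2,W_3$ recording whether $wa<w$, whether $w<wa$ with $wa\leq w'$, or whether $wa\not\leq w'$. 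The entire inductive step then reduces to verifying the three identities $\nabla(W_i)=\P_i$, after which Theorem~\ref{thm:main} delivers the isomorphism $\spec_H U_q^w\cong W^{\leq w}$, which is automatically compatible with contraction and with the Bruhat projection $\Phi$.

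The main obstacle is precisely the verification of $\nabla(W_i)=\P_i$, and this is where the representation theory of $U_q^w$ enters. The derivation $\delta$ records the commutators of the new root vector $x$ with the lower root vectors, so the conditions ``$P$ is $\delta$-stable'' and the stronger ``$\delta(U_q^{w'})\subseteq P$'' measure how $x$ interacts with a given prime $P$; one must show that these algebraic dichotomies coincide, under $\nabla$, with the Bruhat-theoretic dichotomy between $wa<w$ and $w<wa$ together with the condition $wa\leq w'$. I expect to control this with the subword and lifting properties recalled above: the lifting property is exactly what decides whether appending $a$ keeps an element inside $[1,w']$ or pushes it up into $[1,w'a]\setminus[1,w']$, the combinatorial shadow of whether $x$ survives in the corresponding prime, while the subword property pins down the order relations among the newly created primes. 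By contrast, Yakimov's original proof \cite{Y09} builds the prime attached to each $y\leq w$ explicitly from quantum minors and computes its inclusions directly; the inductive route above trades those global computations for the three local identities $\nabla(W_i)=\P_i$, which I view as the delicate heart of the argument.
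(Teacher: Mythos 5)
The paper does not prove this statement at all: Theorem~\ref{thm:Yaki} is quoted from \cite{Y09}, and the machinery of Theorem~\ref{thm:main} is only ever applied to specific examples where the sets $\P_1,\P_2,\P_3$ can be computed by hand. Your proposal, by contrast, tries to re-derive Yakimov's theorem by induction on a reduced expression, and it has a genuine gap exactly where you acknowledge the ``delicate heart'' lies: the identities $\nabla(W_i)=\P_i$ are never established, and the tools you propose for establishing them cannot do the job. The subword and lifting properties are statements purely internal to the Coxeter group; they can tell you which elements of $W^{\leq w'}$ lie in $W_1$, $W_2$ or $W_3$, but they carry no information whatsoever about whether a given $H$-prime of $U_q^{w'}$ is $\delta$-stable or contains $\delta(U_q^{w'})$. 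That is an algebraic question about the commutators of the new root vector with $U_q^{w'}$, and answering it for every $H$-prime is essentially equivalent in difficulty to Yakimov's theorem itself; in \cite{Y09} (and in the combinatorial approach of \cite{MC09}) it is resolved only through an explicit description of the $H$-primes via quantum minors and Demazure-type data. So your reduction is not a proof: it relocates the entire content of the theorem into an unverified hypothesis.

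There is also a structural defect: your induction hypothesis is too weak to even begin the inductive step. Knowing merely that \emph{some} poset isomorphism $\nabla:W^{\leq w'}\to\spec_HU_q^{w'}$ exists does not determine $\nabla(W_i)$, because an abstract isomorphism of posets knows nothing about $\delta$; Theorem~\ref{thm:main} requires a \emph{particular} $\nabla$ satisfying $\nabla(W_i)=\P_i$, and a given pair of posets can admit isomorphisms that fail this even when others satisfy it. Compare the paper's own use of the machinery in Proposition~\ref{eg:app2weylalg}: there the induction carries extra data --- explicit images such as $\nabla_l(s_l)=\langle\Omega_l\rangle$ and the containment $\nabla_l(w)\subseteq Y_{l-1}$ --- precisely so that the conditions $\nabla(W_i)=\P_i$ can be verified at the next stage. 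To make your argument work you would have to strengthen the induction hypothesis to carry an explicit description of every $H$-prime of $U_q^{w'}$ (generating sets whose behavior under $\delta$ is controlled), at which point you are reconstructing Yakimov's proof rather than bypassing it.
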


We conclude this section by fixing some standard poset notation we will need

Given disjoint partially ordered sets $\P_1$ and $\P_2$, we denote by $\P=\P_1\sqcup\P_2$ the disjoint union of posets, with poset structure given by the rule: $$x\leq_\P x'\Leftrightarrow\exists i,\  x,x'\in\P_i\text{ and }x\leq_{\P_i}x'.$$

Given partially ordered sets $\P_1$ and $\P_2$, we denote by $\P=\P_1\times\P_2$ the cartesian product of posets with ordering given by $$(x,y)\leq_{\P}(x',y')\Leftrightarrow x\leq_{\P_1}x'\text{ and }x\leq_{\P_2}x'.$$

\section{Bruhat order intervals}\label{section:bruhat}

\begin{notation}
For this section, $W$ refers to a Coxeter group with respect to a fixed generating set.  Bruhat order intervals refer to intervals in $W$ of the form $[1,w]$.
\end{notation}

In this section, we give some combinatorial properties of posets and Coxeter groups in the hopes of employing a combinatorial approach to the problem of finding an isomorphism of posets between Bruhat order intervals and torus-invariant prime spectra.

\begin{notation}Let $W$ be a Coxeter group and $a$ a generator.  Denote by $m_a:W\to W$ the right multiplication map $w\mapsto wa.$  Note that $m_a$ is a bijection but does not preserve Bruhat order.  Denote by $W_a=\{w\in W:wa<w )\}$ and $W_a'=W\setminus W_a.$\label{notation:Wa}

\end{notation}

We next collect some standard equivalences it will be useful to have handy.

\begin{obs}The following statements are equivalent for $w\in W$ and $a$ a generator of $W$.
\begin{enumerate}[(a)]
\item $wa\leq w$
\item $wa<w$
\item $l(wa)<l(w)$
\item $l(wa)\leq l(w)$
\item There exists $w'\in W$ with $w'\leq w$ such that $w=w'a$.
\end{enumerate}
\label{prop:DRw}
\end{obs}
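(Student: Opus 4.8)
The plan is to prove the equivalence of the five statements (a)--(e) by establishing a cycle of implications, leaning on the three Bruhat-order properties recalled in the background section: the chain property (gradedness by length), the subword property, and the lifting property. The key observation that drives everything is that $a$ is an involution ($a^2=1$), so $m_a$ pairs each element $w$ with its partner $wa$, and exactly one of $w, wa$ is longer than the other. Concretely, $l(wa) = l(w) \pm 1$ always, since multiplying by a single generator changes length by exactly one. This single fact immediately collapses several of the stated conditions together.

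First I would dispose of the length conditions (c) and (d) and their relation to (a) and (b). Because $l(wa) = l(w) \pm 1$, we can never have $l(wa) = l(w)$; hence $l(wa) < l(w)$ and $l(wa) \leq l(w)$ are the same statement, giving (c) $\Leftrightarrow$ (d) trivially. Next, the equivalence (a) $\Leftrightarrow$ (b) follows the same way: if $wa \leq w$ in the Bruhat order then either $wa < w$ or $wa = w$; the latter forces $a = 1$, which is impossible for a generator, so $wa \leq w$ already means $wa < w$. The bridge between the order relations and the length relations is the chain property: since the Bruhat order is graded by length, $wa < w$ implies $l(wa) < l(w)$, giving (b) $\Rightarrow$ (c). For the reverse direction (c) $\Rightarrow$ (b), I would invoke the standard fact that for a single generator $a$, comparability of $w$ and $wa$ is automatic and is determined by length: one uses the subword property, observing that whichever of $w, wa$ is shorter is obtained from the longer by deleting an occurrence of $a$ from a reduced word, hence is a subword and therefore strictly below it. This establishes the block (a) $\Leftrightarrow$ (b) $\Leftrightarrow$ (c) $\Leftrightarrow$ (d).

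It remains to fold in (e). For (a) $\Rightarrow$ (e), suppose $wa \leq w$; then taking $w' = wa$ we have $w' \leq w$ and $w'a = wa\cdot a = w$, which is exactly the content of (e). Conversely, for (e) $\Rightarrow$ (a), assume there is some $w' \leq w$ with $w = w'a$; then $wa = w'a\cdot a = w'$, so $wa = w' \leq w$, which is (a). Both directions are purely formal, using only $a^2 = 1$ and the definitions, so no Bruhat-order machinery beyond the hypotheses is needed here.

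Assembling these, the cycle (a) $\Rightarrow$ (e) $\Rightarrow$ (a) together with (a) $\Leftrightarrow$ (b) $\Leftrightarrow$ (c) $\Leftrightarrow$ (d) yields the full equivalence. The main obstacle, such as it is, lies in the direction (c) $\Rightarrow$ (b): one must argue that a length inequality actually forces Bruhat comparability, rather than merely incomparability-with-smaller-length. This is where the subword property does the real work, and it is worth stating carefully rather than waving at, since it is the only step that is not a one-line formal manipulation. Everything else reduces to the fact that a generator is an involution of length one and that the Bruhat order is graded by length.
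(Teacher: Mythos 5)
Your proposal is correct, and it is exactly the kind of argument the paper intends: the paper states this as an unproved Observation, deferring to the standard properties (chain, subword, lifting) recalled in its background section and to \cite{BB05}. Your chain of implications --- using $l(wa)=l(w)\pm 1$ to collapse (a)/(b) and (c)/(d), the subword property for (c) $\Rightarrow$ (b), and the formal involution argument for (a) $\Leftrightarrow$ (e) --- is the standard verification and fills in precisely the details the paper leaves to the reader.
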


\begin{obs}The restrictions $m_a:W_a\to W_a'$ and $m_a:W_a'\to W_a$ are mutually inverse isomorphisms of partially ordered sets.\label{prop:ma}\end{obs}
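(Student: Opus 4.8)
The plan is to check three things in turn: that $m_a$ restricts to mutually inverse bijections $W_a\to W_a'$ and $W_a'\to W_a$, that each restriction preserves order, and that therefore each is a poset isomorphism. The bijection part is formal. Since $a$ is a generator we have $a^2=1$, so $m_a\circ m_a=\mathrm{id}_W$; thus it suffices to see that $m_a$ carries $W_a$ into $W_a'$ and conversely. If $w\in W_a$ then $wa<w$, and since $(wa)a=w>wa$, Observation~\ref{prop:DRw} identifies $wa$ as an element of $W_a'$; the same computation read backwards shows $m_a(W_a')\subseteq W_a$. Being its own inverse on $W$, $m_a$ then restricts to mutually inverse bijections on the two pieces.

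The substance is order-preservation, and the engine throughout is the lifting property. First I would show $m_a\colon W_a\to W_a'$ is order-preserving. Given $v,w\in W_a$ with $v\leq w$, the case $v=w$ is trivial, so assume $v<w$; then $va<v<w$, so $va<w$ strictly. Now $va\in W_a'$ (that is, $va<(va)a$) while $w\in W_a$ (that is, $wa<w$), so the lifting property applied to the pair $va<w$ delivers $va\leq wa$. For the companion map $m_a\colon W_a'\to W_a$, take $p,q\in W_a'$ with $p<q$; since $q<qa$ we get $p<q<qa$, hence $p<qa$ strictly, with $p\in W_a'$ and $qa\in W_a$ (because $(qa)a=q<qa$), so the lifting property applied to the pair $p<qa$ gives $pa\leq qa$.

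These two implications assemble into the claimed isomorphisms. For $v,w\in W_a$ the first step gives $v\leq w\Rightarrow va\leq wa$, while the second step, applied to the images $va,wa\in W_a'$ and using $(va)a=v$, gives $va\leq wa\Rightarrow v\leq w$; hence $v\leq w\Leftrightarrow va\leq wa$, so $m_a\colon W_a\to W_a'$ is a poset isomorphism, with inverse $m_a\colon W_a'\to W_a$. I expect the only real care to be in matching the asymmetric hypotheses of the lifting property, which demands that the smaller argument lie in $W_a'$ and the larger in $W_a$: in each direction the point is to feed it the pair $(va,w)$ or $(p,qa)$ rather than $(v,w)$ or $(p,q)$, with the chains $va<v<w$ and $p<q<qa$ supplying the strict comparability needed to invoke it. No further difficulty is anticipated.
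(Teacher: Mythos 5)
Your proof is correct, and it is precisely the argument the paper has in mind: the statement is left as an Observation (with a pointer to the standard properties recalled in Section 1), and your verification via Observation~\ref{prop:DRw} together with two carefully arranged applications of the lifting property is the intended ``straightforward application of the above properties.'' The care you take in feeding the lifting property the pairs $(va,w)$ and $(p,qa)$, so that the smaller element lies in $W_a'$ and the larger in $W_a$, is exactly the point of the exercise.
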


\begin{notation}Let $a$ be a generator of $W$.  Let $\ov w\in W_a'$.  We fix this notation for the remainder of the article.

We define four pairwise disjoint subsets $W_i^{\ov w,a}$ for $i=1,\dots,4$ of $W^{\leq\ov wa}$ as follows.\begin{align*}
W_1^{\ov w,a}&=\{w\in W^{\leq\ov wa}:wa\leq w\leq\ov w\}\\
W_2^{\ov w,a}&=\{w\in W^{\leq\ov wa}:w\leq wa\leq \ov w\}\\
W_3^{\ov w,a}&=\{w\in W^{\leq\ov wa}:wa\not\leq\ov w\}\\
W_4^{\ov w,a}&=\{w\in W^{\leq\ov wa}:w\not\leq\ov w\}
\end{align*}

For ease of notation, we will often write $W_i:=W_i^{\ov w,a}$ for $i=1,\dots,4$ in cases where $\ov w$ and $a$ are fixed and understood.

We will refer to elements of $W_4$ as \emph{new words} and all others as \emph{old words}.
\label{notation:BruhatPartition}
\end{notation}

\begin{obs}Notice that $W_1\cup W_2\cup W_3=W^{\leq\ov w}$.

Further, observe $$W_1=W_a\cap W^{\leq\ov w},\
W_2=m_a(W_1),\ W_3=m_a(W_4)$$

Also notice \begin{align*}
W_2\cup W_3&=W^{\leq\ov wa}\cap W_a'=W^{\leq\ov w}\cap W_a'.\\
W_1\cup W_4&=W^{\leq\ov wa}\cap W_a.
\end{align*}

Finally, observe that $W_3$ is an upper set of $W^{\leq\ov w}$, and that $W_4$ and $W_3\cap W_4$ are both upper sets of $W^{\leq\ov wa}.$\label{obs:BruhatPartitionfacts}

\end{obs}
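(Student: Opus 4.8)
The plan is to reduce every assertion to Bruhat-order bookkeeping built on three tools: the equivalences of Observation~\ref{prop:DRw}, the fact from Observation~\ref{prop:ma} that $m_a$ restricts to mutually inverse order isomorphisms $W_a\to W_a'$ and $W_a'\to W_a$, and the lifting property. The single most useful consequence I would extract first is the following packaging of the lifting property, which I will call $(\star)$: for every $v\in W_a'$ one has $v\leq\ov wa$ if and only if $v\leq\ov w$. One direction is immediate since $\ov w\leq\ov wa$; for the other, given $v\leq\ov wa$ with $v\in W_a'$, note $v\neq\ov wa$ (as $\ov wa\in W_a$ while $v\in W_a'$), apply the lifting property to $v<\ov wa$ with $v<va$ and $(\ov wa)a=\ov w<\ov wa$, and read off $v\leq\ov w$. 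Immediately $(\star)$ gives the equality $W^{\leq\ov wa}\cap W_a'=W^{\leq\ov w}\cap W_a'$.

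From $(\star)$ I would next record two structural facts that make the four sets transparent: every element of $W_3$ lies in $W_a'$ and in $W^{\leq\ov w}$, while every element of $W_4$ lies in $W_a$. For $W_3$: if $w\in W_3$ had a descent at $a$ then $wa\in W_a'$ and $wa\leq w\leq\ov wa$, so $(\star)$ would force $wa\leq\ov w$, contradicting the defining condition $wa\not\leq\ov w$; hence $w\in W_a'$ and then $(\star)$ gives $w\leq\ov w$. For $W_4$: if $w\in W_4$ were in $W_a'$, then $(\star)$ would give $w\leq\ov w$, against $w\not\leq\ov w$. In particular $W_3\subseteq W^{\leq\ov w}$ whereas $W_4=W^{\leq\ov wa}\setminus W^{\leq\ov w}$, so the four sets are genuinely pairwise disjoint and $W_3\cap W_4=\emptyset$.

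With these in hand the set-equalities are short case analyses on whether an element lies in $W_a$ or $W_a'$. The identity $W_1=W_a\cap W^{\leq\ov w}$ is just a rereading of the definition; $W_2=m_a(W_1)$ and $W_3=m_a(W_4)$ follow by applying $m_a$ and checking that the defining inequalities transform correctly, using Observation~\ref{prop:ma} to move inequalities across $m_a$ (e.g. for $w\in W_4\subseteq W_a$ one has $wa<w\leq\ov wa$ and $(wa)a=w\not\leq\ov w$, so $wa\in W_3$, and conversely for $u\in W_3\subseteq W_a'$ monotonicity gives $ua\leq\ov wa$ while $ua\not\leq\ov w$, so $ua\in W_4$). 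Claim $W_1\cup W_2\cup W_3=W^{\leq\ov w}$, and the two displayed unions $W_2\cup W_3=W^{\leq\ov w}\cap W_a'$ and $W_1\cup W_4=W^{\leq\ov wa}\cap W_a$, all come from splitting an element of the right-hand side according to its descent at $a$ and, when ascending, according to whether $wa\leq\ov w$.

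The step I expect to be the real work is the upper-set assertion for $W_3$ inside $W^{\leq\ov w}$; the corresponding claim for $W_4$ is immediate because $W_4$ is the complement in $W^{\leq\ov wa}$ of the lower set $W^{\leq\ov w}$ (and, since $W_3\cap W_4=\emptyset$, the intersection is vacuously an upper set — I suspect the intended statement is that $W_3\cup W_4$ is an upper set of $W^{\leq\ov wa}$, which follows once $W_3$ is shown to be upper in $W^{\leq\ov w}$). To prove $W_3$ is upper, take $w\in W_3$ and $w\leq w'\leq\ov w$; I must show $w'\in W_3$, i.e. $w'a\not\leq\ov w$. The danger is the case $w'\in W_a$: there the lifting property applied to $w<w'$ with $w<wa$ and $w'a<w'$ yields $wa\leq w'\leq\ov w$, contradicting $wa\not\leq\ov w$, so in fact $w'\in W_a'$; then monotonicity of $m_a$ on $W_a'$ gives $wa\leq w'a$, whence $w'a\leq\ov w$ would force $wa\leq\ov w$, again a contradiction, so $w'a\not\leq\ov w$ and $w'\in W_3$. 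Ruling out this descent case via the lifting property is exactly the subtle point, since $m_a$ does not preserve Bruhat order globally and one cannot simply push $w\leq w'$ through right multiplication by $a$.
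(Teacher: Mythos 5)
Your proof is correct and complete; the paper gives no proof of this statement (it is an Observation, covered by the earlier remark that such facts are straightforward applications of the stated Bruhat-order properties), and the route you take---extracting $(\star)$ from the lifting property and then moving inequalities across the order-isomorphism $m_a\colon W_a'\to W_a$ of Observation~\ref{prop:ma}---is exactly the intended one, including the genuinely nontrivial step of ruling out a descent at $a$ for $w'$ in the upper-set argument for $W_3$. Your side remark is also accurate: since $\ov w\in W_a'$, one automatically has $W_3\subseteq W^{\leq\ov w}\cap W_a'$, so the four sets are pairwise disjoint as the paper asserts, $W_3\cap W_4=\emptyset$ is only vacuously an upper set, and the meaningful reading of the final claim is that $W_3\cup W_4$ is an upper set of $W^{\leq\ov wa}$, which follows by combining your two upper-set arguments.
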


\begin{obs}There is a bijective poset homomorphism $\psi:W_2\times\2\to W_1\cup W_2$ $$(w,\epsilon)\mapsto\begin{cases}
w,&\epsilon=0\\
wa,&\epsilon=1.
\end{cases}$$ and canonical inclusions $W_2\to W_1\cup W_2$ and $W_2\to W_2\times\2$, $w\mapsto(w,0)$ so that the following diagram commutes:\begin{center}\begin{tikzpicture}
\matrix (m)[matrix of math nodes, row sep=1.5cm, column sep=1.5cm]{W_2\times\2&&W_1\cup W_2\\
&W_2&\\};
\path[->](m-1-1)edge node[auto]{$\psi$}(m-1-3);
\path[left hook->](m-2-2)edge(m-1-1);
\path[right hook->](m-2-2)edge(m-1-3);
\end{tikzpicture}

\end{center}\label{lemma:W1W2toW2times2}\end{obs}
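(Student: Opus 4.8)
The plan is to verify, in order, that $\psi$ is well defined, that it is a bijection, that it is order preserving, and finally that the triangle commutes. Throughout, $W_1\cup W_2$ is given the Bruhat order induced from $W^{\leq\ov w}$. It is worth noting at the outset that the statement asserts only that $\psi$ is a bijective poset \emph{homomorphism}, so I need order-preservation in the forward direction only and need not (indeed cannot) establish that $\psi\inv$ is monotone.

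For well-definedness, recall from Observation~\ref{obs:BruhatPartitionfacts} that $W_2=m_a(W_1)$, and that since $a$ is a generator the map $m_a$ is an involution ($a^2=1$ gives $m_a^2=\mathrm{id}$). Hence $m_a$ carries $W_2$ back onto $W_1$, so for $w\in W_2$ we have $\psi(w,0)=w\in W_2$ and $\psi(w,1)=wa\in W_1$; both images lie in $W_1\cup W_2$. For bijectivity I would argue slice by slice: with $\epsilon$ fixed, injectivity is immediate because $m_a$ is a bijection, while a collision between the two slices is impossible since the $\epsilon=0$ slice maps into $W_2$ and the $\epsilon=1$ slice into $W_1$, and $W_1,W_2$ are disjoint by Notation~\ref{notation:BruhatPartition}. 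Surjectivity is the reverse bookkeeping: an element of $W_2$ is $\psi(\,\cdot\,,0)$ of itself, and an element $u\in W_1$ equals $\psi(ua,1)$ using $ua\in W_2$.

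Order-preservation is the only step requiring genuine care, and I would split it according to $\epsilon\leq\epsilon'$ in $\2$, assuming $(w,\epsilon)\leq(w',\epsilon')$, i.e. $w\leq w'$ with $w,w'\in W_2$. The case $\epsilon=\epsilon'=0$ is trivial. For $\epsilon=\epsilon'=1$ I must show $wa\leq w'a$; since $W_2\subseteq W_a'$ (Observation~\ref{obs:BruhatPartitionfacts}) and $m_a\colon W_a'\to W_a$ is a poset isomorphism (Observation~\ref{prop:ma}), the relation $w\leq w'$ yields exactly $wa\leq w'a$. The genuinely mixed case $\epsilon=0,\ \epsilon'=1$ requires $w\leq w'a$: here $w'\in W_2\subseteq W_a'$ forces $w'\leq w'a$ (Observation~\ref{prop:DRw}), so $w\leq w'\leq w'a$ and transitivity closes it. This is where I expect the only possible slip, since it is tempting but false to hope for the converse implication; in fact $\psi$ is \emph{not} an order isomorphism, because a relation $u\leq v$ with $u\in W_1,\ v\in W_2$ would force $(ua,1)\leq(v,0)$, which fails in $W_2\times\2$, and this is precisely why the statement claims only a homomorphism.

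Finally, commutativity of the triangle is immediate: the composite $W_2\to W_2\times\2\xrightarrow{\psi}W_1\cup W_2$ sends $w\mapsto(w,0)\mapsto w$, which is the canonical inclusion $W_2\hookrightarrow W_1\cup W_2$. Thus the main obstacle, to the extent there is one, is merely keeping the direction of $m_a$ straight and resisting the temptation to prove an isomorphism; every order fact needed is already packaged in Observations~\ref{prop:DRw}, \ref{prop:ma}, and \ref{obs:BruhatPartitionfacts}.
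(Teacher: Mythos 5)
Your proof is correct; the paper offers no proof at all for this Observation, and your verification---well-definedness and bijectivity from $W_2=m_a(W_1)$ together with the disjointness of $W_1$ and $W_2$, order-preservation in the three cases via Observations~\ref{prop:DRw} and~\ref{prop:ma} plus transitivity for the mixed case---is exactly the routine argument the surrounding observations of Section~\ref{section:bruhat} are set up to supply. One small caveat: your unconditional side remark that $\psi$ is \emph{not} an order isomorphism is overstated, since it requires the existence of a relation $u\leq v$ with $u\in W_1$, $v\in W_2$, which can fail (e.g.\ $W$ of type $A_2$, $a=s_1$, $\ov w=s_1s_2$, where $W_1=\{s_1\}$, $W_2=\{1\}$ and $\psi$ is an isomorphism), although such relations do occur in general (e.g.\ $W$ of type $A_3$, $a=s_2$, $\ov w=s_1s_2s_3s_2s_1$, $u=s_2$, $v=s_1s_2s_3$) and the paper itself makes the same loose assertion about $\nu_1$ in Proposition~\ref{prop:BruhatPushout}; this does not affect the validity of your proof of the stated claim.
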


\begin{prop}There is a bijective homomorphism $\nu_1:W_3\sqcup(W_2\times\2)\to W^{\leq\ov w}$ given by $$\xi\mapsto\begin{cases}
\xi,&\xi\in W_3\\
\psi(\xi),&\xi\in W_2\times\2\end{cases}$$ with $\psi$ as in Observation~\ref{lemma:W1W2toW2times2}.  There is a canonical injective poset homomorphism $$\nu_2:W_3\sqcup(W_2\times\2)\to (W_3\cup W_2)\times\2$$ given by $$\xi\mapsto\begin{cases}\xi,& x\in (W_2\times \2)\\
(\xi,0),&\xi\in W_3.
\end{cases}$$

There is a canonical inclusion $W^{\leq\ov w}\to W^{\leq\ov wa}$ and a bijective homomorphism $\intercal:(W_3\cup W_2)\times\2\to W^{\leq\ov wa}$ given by $$(w,\epsilon)\mapsto\begin{cases}w,&\epsilon=0\\
wa,&\epsilon=1.\end{cases}$$

Then the following commutative diagram is a pushout in the category of finite posets.\begin{center}\begin{tikzpicture}
\node(1) {$(W_2\cup W_3)\times\2$};
\node(2) [right=1.5cm of 1]{$W^{\leq\ov wa}$};
\node(3) [below=of 1]{$W_3\sqcup(W_2\times\2)$};
\node(4) [right=1.5cm of 3]{$W^{\leq\ov w}$};
\draw[->](1)--node[auto]{$\intercal$}(2);
\draw[<-](1)--node[auto]{$\nu_2$}(3);
\draw[right hook->](4)--(2);
\draw[->](3)--node[auto]{$\nu_1$}(4);
\end{tikzpicture} 
\end{center}
\label{prop:BruhatPushout}
\end{prop}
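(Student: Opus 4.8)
The plan is to verify the universal property of the pushout directly, taking advantage of the fact that $\intercal$ is a bijection of underlying sets. First I would record the elementary structural facts. Commutativity of the square is a direct check on the three types of element $w\in W_3$, $(w,0)\in W_2\times\2$, and $(w,1)\in W_2\times\2$. That $\iota\colon W^{\leq\ov w}\hookrightarrow W^{\leq\ov wa}$ is monotone is immediate. Monotonicity of $\intercal$ reduces, via the product order on $B:=(W_2\cup W_3)\times\2$, to the three cases $\epsilon=\epsilon'=0$, $\epsilon=\epsilon'=1$, and $\epsilon<\epsilon'$; the nontrivial ones use that $W_2\cup W_3=W^{\leq\ov w}\cap W_a'$ (Observation~\ref{obs:BruhatPartitionfacts}), so $w<wa$ for every $w\in W_2\cup W_3$, together with the order-isomorphism $m_a\colon W_a'\to W_a$ of Observation~\ref{prop:ma} to pass from $w\leq w'$ to $wa\leq w'a$. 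Finally, $\intercal$ is a set bijection: it carries the layer $(W_2\cup W_3)\times\{0\}$ bijectively onto $W_2\cup W_3$ and, using $W_2=m_a(W_1)$ and $W_3=m_a(W_4)$, carries $(W_2\cup W_3)\times\{1\}$ bijectively onto $W_1\cup W_4$, and these two images partition $W^{\leq\ov wa}$.

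Granting these, for any finite poset $Z$ with monotone maps $\beta\colon B\to Z$ and $\gamma\colon W^{\leq\ov w}\to Z$ satisfying $\beta\circ\nu_2=\gamma\circ\nu_1$, the comparison map is forced on underlying sets to be $u:=\beta\circ\intercal^{-1}$, which settles uniqueness at once. The identity $u\circ\intercal=\beta$ holds by construction, and I would verify $u\circ\iota=\gamma$ by evaluating on $W_2\cup W_3$ (where $\intercal^{-1}$ lands in the $(\cdot,0)$-layer) and on $W_1$ (where $\intercal^{-1}(wa)=(w,1)$ with $w\in W_2$), recognizing each required equality as an instance of the compatibility hypothesis $\beta\circ\nu_2=\gamma\circ\nu_1$.

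The substantive step is monotonicity of $u$, which I would deduce from the claim that every Bruhat relation $x\leq y$ in $W^{\leq\ov wa}$ is either contained in $W^{\leq\ov w}$ or satisfies $\intercal^{-1}(x)\leq_B\intercal^{-1}(y)$; in the first case $u(x)\leq u(y)$ follows from monotonicity of $\gamma$, in the second from that of $\beta$. Since $W_4$ is an upper set of $W^{\leq\ov wa}$ (Observation~\ref{obs:BruhatPartitionfacts}), any relation with $y\notin W_4$ has both endpoints in $W^{\leq\ov w}$ and is handled by $\gamma$. In the remaining case $y\in W_4$, write $y=ua$ with $u=ya\in W_3$, and split on the position of $x$: if $x\in W_1\cup W_4=W^{\leq\ov wa}\cap W_a$, the isomorphism $m_a\colon W_a\to W_a'$ reduces $x\leq y$ to $xa\leq u$, giving $\intercal^{-1}(x)=(xa,1)\leq_B(u,1)=\intercal^{-1}(y)$; if instead $x\in W_2\cup W_3$, then $x<y$ (disjointness), $x<xa$, and $u<ua=y$, so the lifting property yields $x\leq u$, hence $(x,0)\leq_B(u,1)$. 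In all cases $\intercal^{-1}(x)\leq_B\intercal^{-1}(y)$, as required.

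I expect this last case analysis to be the main obstacle, since it is precisely where the Bruhat combinatorics enters: one must rule out any relation into a new word of $W_4$ that escapes the product structure of $B$, and the argument depends on invoking the lifting property and the order-isomorphism $m_a$ correctly in each position of $x$. Once it is in place, $u$ is monotone, the universal property holds, and the square is a pushout in the category of finite posets; antisymmetry of the resulting pushout order requires no separate argument, as the induced order coincides with the genuine (partial) Bruhat order on $W^{\leq\ov wa}$.
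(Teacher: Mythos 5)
Your proof is correct, but it establishes the pushout property by a genuinely different route than the paper. The paper invokes the existence of pushouts in the category of finite posets: it takes the abstract pushout $(\tilde\P,\theta_1,\theta_2)$ of $\nu_1$ and $\nu_2$, obtains the induced comparison map $f:\tilde\P\to W^{\leq\ov wa}$, and proves $f$ is an isomorphism by producing the candidate inverse $\ov f=\theta_2\intercal\inv$ and checking that it is order-preserving. You instead verify the universal property directly: for an arbitrary test cocone $(Z,\beta,\gamma)$ the mediating map is forced, on underlying sets, to be $\beta\circ\intercal\inv$ because $\intercal$ is a bijection, and everything reduces to your key claim that any relation $x\leq y$ in $W^{\leq\ov wa}$ either has both endpoints in $W^{\leq\ov w}$ or is reflected by $\intercal\inv$ into the product order on $(W_2\cup W_3)\times\2$. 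The combinatorial content is identical in the two arguments---each splits relations according to membership in $W_a$ versus $W_a'$ and uses exactly the ingredients you cite (the upper-set property of $W_4$, the order isomorphism $m_a$, and the lifting property); your subcase $x\in W_2\cup W_3$, $y\in W_4$ is the paper's Case 2, your subcase $x\in W_1\cup W_4$ is the paper's ``trivial'' case handled by $m_a$, and your case $y\notin W_4$ plays the role of the paper's Case 1, where the relation is routed through the $W^{\leq\ov w}$ leg. What your architecture buys is self-containedness: you never need to know that finite posets admit pushouts or how they are constructed, and in particular you bypass the paper's unproved side assertions that $\theta_2$ is bijective and that $f\ov f=\mathrm{id}$ together with finiteness forces $f$ to be a bijection---points that in fact require some care, since poset pushouts involve an antisymmetrization step that could in principle collapse elements; uniqueness of the mediating map is also immediate in your setup. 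What the paper's route buys is economy: order-preservation is checked for the single concrete map $\ov f$ rather than for a map into an arbitrary finite poset $Z$, at the cost of leaning on categorical generalities about $\tilde\P$.
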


\begin{proof}

That $\nu_1$ is a well-defined bijective homomorphism is clear.  Notice that $\nu_1$ is not an isomorphism.  It is similarly clear that $\nu_2$ is a well-defined injective homomorphism and that $\intercal\nu_2=\nu_1.$

We observe that $\intercal\inv:W^{\leq\ov wa}\to(W_3\cup W_2)\times\2$ is given by $$w\mapsto\begin{cases}(w,0),&w\in W_a'\\
(wa,1),&w\in W_a.\end{cases}$$  We  note that $\intercal$ is order-preserving but $\intercal\inv$ is  not.

It remains to verify the universality condition.  Suppose $(\tilde\P,\theta_1,\theta_2)$ is the pushout.  We then have a unique order-preserving map $f:\tilde\P\to W^{\leq\ov wa}$ satisfying\begin{center}\begin{tikzpicture}
\node(1) at (0,0){$(W_2\cup W_3)\times\2$};
\node(2) at (3,0){$\tilde P$};
\node(3) at (0,-2){$W_3\sqcup(W_2\times\2)$};
\node(4) at (3,-2){$W^{\leq\ov w}$};
\node(W) [above right=of 2]{$W^{\leq\ov wa}$};
\draw[->](1)--node[auto]{$\theta_2$}(2);
\draw[<-](1)--node[auto]{$\nu_2$}(3);
\draw[->](4)--node[auto]{$\theta_1$}(2);
\draw[->](3)--node[auto]{$\nu_1$}(4);
\draw[right hook->](4)--(W);
\draw[->](1)--node[auto]{$\intercal$}(W);
\draw[dashed,->](2)--node[left]{$f$}(W);
\end{tikzpicture} \end{center}

Notice also that we have that $\theta_2$ is bijective.

We wish to show $f$ is an isomorphism.  We define $\ov f:=\theta_2\intercal\inv:W^{\leq\ov wa}\to\tilde\P$ and check that for all $w\in W^{\leq\ov wa}$, $$f\ov f(w)=f\theta_2\intercal\inv(w)=\intercal\intercal\inv(w)=w.$$  Recalling these are finite posets, we conclude $f$ is a bijection with inverse $\ov f$.

It remains to check that $\ov f$ preserves order.  Notice that $\intercal\inv$ restricts to order-preserving maps on $W^{\leq\ov wa}\cap W_a$ and $W^{\leq\ov wa}\cap W_a'$ respectively.

Assume $w,w'\in W^{\leq\ov wa}$ with $w\leq w'$.  We consider the two nontrivial cases.

\begin{enumerate}[{Case} 1:]
\item $w\in W_a$ and $w'\in W_a'$.

Since $W_4$ is an upper set, we must have $w\in W_1$.  Hence, $\intercal\inv(w)=(wa,1)$, with $wa\in W_2$.

We thus see that \begin{align*}
\theta_2\intercal\inv(w)&=\theta_2(wa,1)\\
&=\theta_2\nu_2(wa,1)&\text{ as }(wa,1)\in W_2\times2\\
&=\theta_1\nu_1(wa,1)\\
&=\theta_1(w).
\end{align*}

Notice $\intercal\inv(w')=(w',0)$.  Further observe that if $\nu_2(\xi)=(w',0)$, then $\nu_1(\xi)=w'.$  Hence, \begin{align*}
\theta_2\intercal\inv(w')&=\theta_2(w',0)\\
&=\theta_2\nu_2(\xi)&\text{ for some }\xi\in W_3\sqcup(W_2\times\2)
&=\theta_1\nu_1(\xi)\\
&=\theta_1(w').
\end{align*}

We thus see that $\ov f(w)=\theta_1(w)\leq\theta_1(w')=\ov f(w')$, as desired.

\item $w\in W_a'$ and $w'\in W_a$.  By the lifting property, we see $w\leq w'a$.  Hence, $\intercal\inv(w)=(w,0)\leq (w'a,1)=\intercal\inv(w')$.  Thus $\ov f(w)\leq\ov f(w').$

\end{enumerate}

We conclude $W^{\leq\ov wa}\cong\tilde\P$, and so the desired pushout.\end{proof}

The following is an easy corollary.

\begin{cor}
Suppose $W$ is a Coxeter group, $a$ a generator, and $\ov w\in W_a'$ with $a\not\leq\ov w$.  Then $W^{\ov wa}\cong W^{\leq\ov w}\times\2\cong W^{\leq\ov w}\times W^{\leq a}$.\label{cor:times2}
\end{cor}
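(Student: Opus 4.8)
The plan is to derive this directly from the pushout Proposition~\ref{prop:BruhatPushout} by analyzing what the hypothesis $a\not\leq\ov w$ forces on the partition $W_1,\dots,W_4$. First I would observe that the condition $a\not\leq\ov w$ collapses the poset $W^{\leq\ov w}$ relative to right multiplication by $a$: by the subword property, if some $w\in W^{\leq\ov w}$ satisfied $wa\leq w$ (i.e.\ $w\in W_a$), then $a$ would appear in a reduced word for $w$, hence $a\leq w\leq\ov w$, contradicting the hypothesis. Therefore $W^{\leq\ov w}\subseteq W_a'$, which by the definitions in Notation~\ref{notation:BruhatPartition} and Observation~\ref{obs:BruhatPartitionfacts} means $W_1=W_a\cap W^{\leq\ov w}=\varnothing$, and likewise $W_2=m_a(W_1)=\varnothing$. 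So the only nonempty old-word part is $W_3$, and in fact every element of $W^{\leq\ov w}$ lies in $W_3$; that is, $W_3=W^{\leq\ov w}$ as posets.

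Next I would feed this simplification into the pushout diagram. With $W_2=\varnothing$, the map $\psi$ of Observation~\ref{lemma:W1W2toW2times2} is vacuous, the disjoint union $W_3\sqcup(W_2\times\2)$ reduces to $W_3=W^{\leq\ov w}$, and the product $(W_2\cup W_3)\times\2$ reduces to $W_3\times\2=W^{\leq\ov w}\times\2$. The map $\nu_1$ becomes the identity on $W^{\leq\ov w}$, and $\nu_2$ becomes the canonical inclusion $w\mapsto(w,0)$ of $W^{\leq\ov w}$ into $W^{\leq\ov w}\times\2$. A pushout along an identity map simply identifies the other corner with the opposite object: pushing out $W^{\leq\ov w}\times\2 \xleftarrow{\nu_2} W^{\leq\ov w} \xrightarrow{\mathrm{id}} W^{\leq\ov w}$ yields $W^{\leq\ov w}\times\2$. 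Since Proposition~\ref{prop:BruhatPushout} identifies the pushout with $W^{\leq\ov wa}$, I obtain $W^{\leq\ov wa}\cong W^{\leq\ov w}\times\2$.

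Rather than invoking abstract pushout-along-identity reasoning, it is cleaner and more self-contained to exhibit the isomorphism directly via the map $\intercal$. Under the hypothesis, $\intercal\colon W^{\leq\ov w}\times\2\to W^{\leq\ov wa}$ sends $(w,0)\mapsto w$ and $(w,1)\mapsto wa$; I would verify it is an order-preserving bijection whose inverse is order-preserving. The inverse, read off from the formula for $\intercal\inv$ in the proof of Proposition~\ref{prop:BruhatPushout}, sends $w\in W_a'$ to $(w,0)$ and $w\in W_a$ to $(wa,1)$. The only subtlety is checking order preservation of $\intercal\inv$ when we compare an element of $W_a$ with one of $W_a'$, and this is exactly the lifting-property argument already carried out in Case~2 of that proof; the mixed case analogous to Case~1 cannot arise here because $W_1=\varnothing$ forces any comparison $w\le w'$ with $w\in W_a$, $w'\in W_a'$ to have $w\in W_4$, contradicting that $W_4$ is an upper set containing $w'$. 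For the final identification $W^{\leq a}\cong\2$, I would note that $a\not\leq\ov w$ in particular uses that $a$ is a generator, so $W^{\leq a}=[1,a]=\{1,a\}$ is the two-element chain $\2$, giving $W^{\leq\ov w}\times\2\cong W^{\leq\ov w}\times W^{\leq a}$.

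I expect the main obstacle to be purely bookkeeping rather than conceptual: confirming that $a\not\leq\ov w$ genuinely empties both $W_1$ and $W_2$, and then checking order preservation of $\intercal\inv$ in the single nontrivial mixed case. Both reduce to the subword and lifting properties recalled in Section~\ref{section:bruhat}, so no new machinery is needed; the corollary is "easy" precisely because the hypothesis kills the interesting part of the pushout and leaves a trivial product.
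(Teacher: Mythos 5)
Your proposal is correct and takes essentially the approach the paper intends: the paper states this result as an ``easy corollary'' of Proposition~\ref{prop:BruhatPushout} with no written proof, and your observation that $a\not\leq\ov w$ forces $W_1=W_2=\emptyset$ (hence $W_3=W^{\leq\ov w}$), so that the pushout span collapses and yields $W^{\leq\ov wa}\cong W^{\leq\ov w}\times\2$, is exactly the intended argument. Your direct verification via $\intercal$ is also consistent with the paper's own machinery, matching its later observation that $\intercal$ restricts to an isomorphism $W_3\times\2\cong W_3\cup W_4$.
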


\begin{dfn}
We say a word $\ov w$ is \emph{decomposable} if there exist $w,w'\in W$ with $\ov w=ww'$, such that, given any $z\in w$, $$z\leq w\text{ and }z\leq w'\Rightarrow z=1.$$  We say a word is \emph{indecomposable} otherwise.
\end{dfn}

\begin{cor}Suppose $\ov w\in W$ is a decomposable word which can be decomposed $\ov w=ww'$.  Then $W^{\leq\ov w}\cong W^{\leq w}\times W^{\leq w'}.$\end{cor}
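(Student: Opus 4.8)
The plan is to bypass the inductive machinery of Corollary~\ref{cor:times2} (which can only ever split off Boolean factors $W^{\leq a}\cong\2$, and so cannot build a product in which \emph{both} factors are entangled intervals such as a hexagon) and instead exhibit the isomorphism directly as the group multiplication map, after first reinterpreting decomposability in terms of supports. All the Coxeter-theoretic facts I use are standard (see \cite{BB05}).

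First I would translate the hypothesis. Writing $\operatorname{supp}(u)$ for the set of generators occurring in a reduced word for $u$, the subword property shows that a generator $s$ satisfies $s\leq u$ if and only if $s\in\operatorname{supp}(u)$. Hence the decomposability condition $z\leq w,\ z\leq w'\Rightarrow z=1$ forces $\operatorname{supp}(w)\cap\operatorname{supp}(w')=\emptyset$, since any common generator would be a nontrivial common lower bound; conversely disjoint supports give the meet condition, as $z\leq w$ and $z\leq w'$ force $\operatorname{supp}(z)\subseteq\operatorname{supp}(w)\cap\operatorname{supp}(w')=\emptyset$. So I may set $A:=\operatorname{supp}(w)$ and $B:=\operatorname{supp}(w')$, assume $A\cap B=\emptyset$, and note $w\in W_A:=\langle A\rangle$, $w'\in W_B:=\langle B\rangle$, with $W^{\leq w}\subseteq W_A$ and $W^{\leq w'}\subseteq W_B$.

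Next I would record two standard facts about the standard parabolic subgroups $W_A,W_B$ in the disjoint case. First, products are length-additive, $l(xy)=l(x)+l(y)$ for $x\in W_A$, $y\in W_B$: a right descent of $x$ lies in $A$ and a left descent of $y$ lies in $B$, so $x$ and $y$ admit no common cancelling descent, and the criterion $l(xy)=l(x)+l(y)\Leftrightarrow$ (no generator $s$ with $xs<x$ and $sy<y$) applies. Second, the factorization is unique: $W_A\cap W_B=W_{A\cap B}=\{1\}$, so if $xy=x_1y_1$ with $x,x_1\in W_A$ and $y,y_1\in W_B$, then $x_1\inv x=y_1y\inv\in W_A\cap W_B=\{1\}$, whence $x=x_1$ and $y=y_1$. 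I then check that $\mu:W^{\leq w}\times W^{\leq w'}\to W^{\leq\ov w}$, $(x,y)\mapsto xy$, is a well-defined bijection: fixing reduced words $\mathbf w,\mathbf{w'}$, length-additivity makes $\mathbf w\,\mathbf{w'}$ a reduced word for $\ov w$, every subword splits as an $A$-part times a $B$-part, and the subword property yields both $\{xy:x\leq w,\ y\leq w'\}\subseteq W^{\leq\ov w}$ and the reverse inclusion; injectivity is the uniqueness just noted.

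Finally, the substance is that $\mu$ is a poset isomorphism in both directions. Order-preservation is the easy half: if $x\leq x_1$ and $y\leq y_1$, then $\mathbf x\,\mathbf y$ is a subword of the reduced word $\mathbf{x_1}\,\mathbf{y_1}$, so $xy\leq x_1y_1$. The reverse implication — that $xy\leq x_1y_1$ forces $x\leq x_1$ and $y\leq y_1$ — is where the real content lies, and I expect it to be the main obstacle, since Bruhat order is not generally compatible with multiplication. Here I would combine the subword property with uniqueness: given $xy\leq x_1y_1$, a reduced word for $xy$ is a subword of $\mathbf{x_1}\,\mathbf{y_1}$, hence factors as a product $p\,r$ with $p\leq x_1$ in $W_A$ and $r\leq y_1$ in $W_B$; but $pr=xy$ is then a factorization through $W_A\cdot W_B$, so uniqueness gives $p=x$ and $r=y$, i.e.\ $x\leq x_1$ and $y\leq y_1$. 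This shows $\mu$ is an isomorphism, proving $W^{\leq\ov w}\cong W^{\leq w}\times W^{\leq w'}$.
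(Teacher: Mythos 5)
Your approach is genuinely different from the paper's: the paper states this corollary without proof, as an easy consequence of the pushout machinery of Proposition~\ref{prop:BruhatPushout}, and your observation that Corollary~\ref{cor:times2} by itself can only split off $\2$-factors (so cannot directly produce a product of two non-Boolean intervals) is fair. The skeleton of your argument is correct and essentially complete: decomposability is equivalent to disjointness of supports; the multiplication map $\mu(x,y)=xy$ is a bijection by uniqueness of factorization, since $W_A\cap W_B=W_{A\cap B}=\{1\}$; and order-reflection follows by splitting a reduced subword of $\mathbf{x}_1\,\mathbf{y}_1$ into its $A$-part and $B$-part and invoking that same uniqueness. There is, however, one genuine flaw in the justification of a load-bearing step.

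The flaw is the length-additivity criterion you cite: the claim that $l(xy)=l(x)+l(y)$ if and only if there is no generator $s$ with $xs<x$ and $sy<y$ is false; only the forward implication holds. Counterexample in type $A_2$: take $x=y=s_1s_2$. The unique right descent of $x$ is $s_2$ and the unique left descent of $y$ is $s_1$, so there is no common cancelling generator, yet $xy=s_1s_2s_1s_2=s_2s_1$ (using $s_1s_2s_1=s_2s_1s_2$) has length $2$, not $4$. Since every other step of your proof (well-definedness, surjectivity, and both directions of order-compatibility) relies on $\mathbf{w}\,\mathbf{w'}$ and $\mathbf{x}_1\,\mathbf{y}_1$ being \emph{reduced} words, this step must be repaired. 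The repair is standard: every right descent of $x\in W_A$ lies in $\operatorname{supp}(x)\subseteq A$, hence $x$ has no right descent in $B$, i.e.\ $x$ is the minimal-length representative of the coset $xW_B$; the parabolic coset decomposition (see \cite{BB05}) then gives $l(xy)=l(x)+l(y)$ for all $y\in W_B$. With that substitution your proof goes through.
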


It follows that understanding the poset structure of Bruhat order intervals reduces to understanding the Bruhat order intervals of indecomposable words.

\begin{lemma}
If $w\in W_2$, $w'\in W_4$ and $w\leq w'$, then there exists $z\in W_3$, $w\leq z\leq w'$.\label{obs:ignoreW1}
\end{lemma}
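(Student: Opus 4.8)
The plan is to produce the intermediate element explicitly rather than argue abstractly for its existence. The key observation is that $W_3=m_a(W_4)$ (Observation~\ref{obs:BruhatPartitionfacts}), so the only natural candidate lying in $W_3$ and sitting below $w'$ is $z:=w'a=m_a(w')$. Since $w'\in W_4\subseteq W_a$, this candidate is immediately forced into $W_3$: the map $m_a$ carries $W_4$ bijectively onto $W_3$, so $z=w'a\in W_3$ with no further computation. Thus the membership step $z\in W_3$ is essentially free, and one could alternatively verify it by hand, noting $z=w'a\leq w'\leq\ov wa$ puts $z\in W^{\leq\ov wa}$ while $za=w'\not\leq\ov w$ (as $w'\in W_4$) gives $z\in W_3$ directly from the definition.

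It then remains to sandwich $z$ between $w$ and $w'$. The upper bound is immediate: $w'\in W_4\subseteq W_a$ means $w'a<w'$, whence $z=w'a\leq w'$. For the lower bound $w\leq z$ I would invoke the lifting property. We have $w\in W_2\subseteq W_a'$, so $w<wa$, and $w'\in W_4\subseteq W_a$, so $w'a<w'$; moreover $w<w'$ since $w\leq w'$ while $w\neq w'$ (the sets $W_2$ and $W_4$ are disjoint). The lifting property, applied to our pair $w<w'$, then yields $w\leq w'a=z$, completing the chain $w\leq z\leq w'$.

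The argument is essentially mechanical once the candidate $z=w'a$ is identified, so the only real content is recognizing that the bijection $m_a\colon W_4\to W_3$ supplies the element and that the hypotheses of the lifting property are met. The sole point requiring a moment's care is verifying $w<w'$ strictly (needed to apply lifting), which is exactly where the disjointness of $W_2$ and $W_4$ — equivalently, $w$ and $w'$ lying on opposite sides of the $W_a/W_a'$ partition — enters. I do not anticipate any genuine obstacle beyond correctly assembling these stated facts; this also matches the intended meaning of the label, namely that a path from $W_2$ up to $W_4$ can be routed through $W_3$ and so may ``ignore'' $W_1$.
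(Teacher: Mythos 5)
Your proof is correct and takes exactly the same route as the paper: the paper's entire proof is ``Set $z=w'a$,'' leaving the verification implicit, and your argument simply supplies those details (membership $z\in W_3$ via $m_a(W_4)=W_3$, the bound $z\leq w'$ from $w'\in W_a$, and $w\leq z$ from the lifting property, with the strict inequality $w<w'$ justified by disjointness of $W_2$ and $W_4$). Nothing to correct.
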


\begin{proof}Set $z=w'a$.\end{proof}

We wish to understand the poset structure of $W^{\leq\ov wa}$ in terms of $W^{\leq\ov w}$.  Such an understanding will pave the way for proving facts about Bruhat order intervals by induction.    We proceed as though the poset structure of $W^{\leq\ov w}$ is fully understood.

Since $W^{\leq\ov w}\subseteq W^{\leq\ov wa}$, all order relations involving old words are already known to us.  Recall we refer to $W_4$ as the set of new words and that $m_a$ maps $W_3$ isomorphically onto $W_4$.  Thus all relations involving only new words are also understood.

We are left to wonder about relations between old words and new words.  Since $W_4$ is an upper set, half of this problem is trivialized.  Given $w\in W_4$, what old words $w'$ satisfy $w'\leq w$?  Given that we have a graded poset, transitivity reduces this to the question: What old words $w'$ with $l(w')=l(w)-1$ satisfy $w'\leq w$?

By Lemma~\ref{obs:ignoreW1}, we can ignore words in $W_2$.  After these reductions, we have the next few observations.

\begin{obs}
The map $\intercal$ of Proposition~\ref{prop:BruhatPushout} restricts to an isomorphism $W_3\times\2\cong W_3\cup W_4$.
\end{obs}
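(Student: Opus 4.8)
The plan is to show that the map $\intercal$ from Proposition~\ref{prop:BruhatPushout}, when restricted to $W_3 \times \2$, gives an isomorphism onto $W_3 \cup W_4$. First I would verify the set-theoretic claim that $\intercal(W_3 \times \2) = W_3 \cup W_4$ as sets. Recall $\intercal(w,0) = w$ and $\intercal(w,1) = wa$. Since $W_3 \subseteq W^{\leq \ov w} \cap W_a'$ (every element of $W_3$ satisfies $w \leq wa$ by Observation~\ref{obs:BruhatPartitionfacts}), we have $\intercal(w,0) = w \in W_3$ for the $\epsilon=0$ layer. For the $\epsilon = 1$ layer, $\intercal(w,1) = wa = m_a(w)$, and by Observation~\ref{obs:BruhatPartitionfacts} we have $W_3 = m_a(W_4)$, equivalently $m_a(W_3) = W_4$ since $m_a$ is an involution on $W$ swapping $W_a$ and $W_a'$ (Observation~\ref{prop:ma}). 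So the $\epsilon=1$ layer maps bijectively onto $W_4$. Thus $\intercal$ restricts to a bijection $W_3 \times \2 \to W_3 \cup W_4$, with the two layers landing in the disjoint sets $W_3$ and $W_4$ respectively.

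Next I would verify this restricted bijection is a poset isomorphism. Since $\intercal$ is already known to be order-preserving (established in Proposition~\ref{prop:BruhatPushout}), the restriction is automatically order-preserving, so the only work is checking that the inverse preserves order. The inverse sends $w \in W_3$ to $(w,0)$ and $w \in W_4$ to $(wa,1)$; this is just the restriction of $\intercal\inv$ (whose formula was computed in the proof of Proposition~\ref{prop:BruhatPushout}) to $W_3 \cup W_4$. Recall that in $W_3 \times \2$, the product order gives $(u,\epsilon) \leq (u',\epsilon')$ iff $u \leq u'$ in $W_3$ and $\epsilon \leq \epsilon'$ in $\2$.

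The substantive content, and what I expect to be the main obstacle, is the mixed case: showing that whenever $w \in W_4$ and $w' \in W_3$ (or vice versa) with a Bruhat relation holding between them, the inverse images respect the product order. Concretely, if $w \in W_3$ and $w'' \in W_4$ with $w \leq w''$, I must show $(w,0) \leq (w''a, 1)$ in $W_3 \times \2$, i.e. $w \leq w''a$ in $W_3$; and conversely. Here the key tool is the lifting property: since $w \in W_3 \subseteq W_a'$ means $w < wa$, and $w'' \in W_4 \subseteq W_a$ means $w''a < w''$, the lifting property applied to $w \leq w''$ yields $w \leq w''a$. The subtlety is confirming $w''a \in W_3$ (which follows from $W_3 = m_a(W_4)$) so the comparison makes sense inside $W_3$, and handling the reverse direction where one starts from a relation in the product and must recover a Bruhat relation. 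Since the general pushout argument of Proposition~\ref{prop:BruhatPushout} already carried out essentially this lifting-property bookkeeping for $W^{\leq \ov wa}$, I would streamline the present proof by noting that the restriction to the new-and-$W_3$ words is exactly the portion of that argument not involving $W_1$ or $W_2$, so Lemma~\ref{obs:ignoreW1} justifies ignoring $W_1, W_2$ entirely and the desired isomorphism follows directly.
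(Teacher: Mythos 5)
Your argument is correct and is essentially the paper's implicit one: the observation is stated without proof precisely because it is the restriction of the pushout analysis of Proposition~\ref{prop:BruhatPushout}, and your steps (bijectivity via $m_a(W_3)=W_4$, forward order-preservation inherited from $\intercal$, and the lifting property for a $W_3$ element lying below a $W_4$ element) are exactly that restriction. Two small points to tighten: the opposite mixed case ($w\in W_4$, $w'\in W_3$, $w\leq w'$) must be noted to be vacuous because $W_4$ is an upper set of $W^{\leq\ov wa}$ (otherwise the inverse could not preserve order, since $1\not\leq 0$ in $\2$), and your closing appeal to Lemma~\ref{obs:ignoreW1} is a miscitation --- that lemma concerns relations between $W_2$ and $W_4$ and does nothing to justify the restriction --- though your direct argument in the preceding paragraphs already suffices without it.
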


\begin{notation}Define $\Phi:W^{\leq\ov wa}\to W_2\cup W_3$ by $$w\mapsto\begin{cases}w,& w\in W_a'\\
wa,& w\in W_a.\end{cases}$$\label{notation:Phi}\end{notation}

\begin{obs}
The map $\Phi$ is a 2-1 surjective poset homomorphism.  Moreover, $\Phi$ restricts to the identity map on $W_a'\cap W^{\leq\ov wa}=W_1\cup W_3$ and to an isomorphism $$W_a\cap W^{\leq\ov wa}\to W_a'\cap W^{\leq\ov wa}.$$\label{lemma:Phi1}
\end{obs}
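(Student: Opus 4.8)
The plan is to read off well-definedness, the restriction statements, and the fibre count directly from Observation~\ref{obs:BruhatPartitionfacts} and Observation~\ref{prop:ma}, reserving genuine work for order-preservation. First I would record the decompositions $W_a'\cap W^{\leq\ov wa}=W_2\cup W_3$ and $W_a\cap W^{\leq\ov wa}=W_1\cup W_4$, together with $m_a(W_1)=W_2$ and $m_a(W_4)=W_3$, all supplied by Observation~\ref{obs:BruhatPartitionfacts}; in particular the target $W_2\cup W_3$ is exactly $W_a'\cap W^{\leq\ov wa}$. Well-definedness is then immediate: on $W_a'\cap W^{\leq\ov wa}$ the map $\Phi$ acts as the identity by definition and so lands in $W_2\cup W_3$, while on $W_a\cap W^{\leq\ov wa}=W_1\cup W_4$ it agrees with $m_a$, which sends this set onto $m_a(W_1)\cup m_a(W_4)=W_2\cup W_3$. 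These same two observations give both restriction claims at once: $\Phi$ is literally the identity on $W_a'\cap W^{\leq\ov wa}$, and on $W_a\cap W^{\leq\ov wa}$ it coincides with $m_a$, which by Observation~\ref{prop:ma} is an isomorphism $W_a\to W_a'$ of posets and hence restricts to an isomorphism $W_a\cap W^{\leq\ov wa}\to W_a'\cap W^{\leq\ov wa}$.

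Next I would check that $\Phi$ is surjective and $2$-to-$1$. For $v\in W_2\cup W_3$, the equation $\Phi(w)=v$ forces either $w=v$ with $w\in W_a'$, or $wa=v$, that is $w=va$, with $w\in W_a$, so the fibre over $v$ lies in $\{v,va\}$. It remains to confirm both candidates belong to $W^{\leq\ov wa}$: if $v\in W_2$ then $va=m_a(v)\in W_1$, and if $v\in W_3$ then $va=m_a(v)\in W_4$, since $m_a$ is its own inverse and carries $W_2$ to $W_1$ and $W_3$ to $W_4$ by Observation~\ref{obs:BruhatPartitionfacts}. As $v\neq va$, each fibre has exactly two elements, and surjectivity is already witnessed by the identity part covering all of $W_2\cup W_3$.

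The crux is that $\Phi$ preserves order, which I would establish by a case analysis on the $W_a$/$W_a'$ membership of a pair $w\le w'$ in $W^{\leq\ov wa}$. If $w,w'$ lie in the same part the claim is immediate, since $\Phi$ is the identity on $W_a'$ and equals the order-preserving map $m_a$ on $W_a$ (Observation~\ref{prop:ma}). If $w\in W_a$ and $w'\in W_a'$, then $\Phi(w)=wa<w\le w'=\Phi(w')$ by transitivity. The only delicate case is $w\in W_a'$ and $w'\in W_a$: here $w<wa$ and $w'a<w'$, and $w<w'$ since distinct parts force $w\neq w'$, so the lifting property applies and yields $w\le w'a$; as $\Phi(w)=w$ and $\Phi(w')=w'a$, this is exactly $\Phi(w)\le\Phi(w')$. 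I expect this single invocation of the lifting property to be the main obstacle, everything else being bookkeeping. As a consistency check and alternative route, I note that $\Phi=\pi_1\circ\intercal\inv$, where $\pi_1\colon(W_2\cup W_3)\times\2\to W_2\cup W_3$ is the first projection and $\intercal$ is as in Proposition~\ref{prop:BruhatPushout}; the present case analysis is then precisely the first-coordinate part of Cases~1 and~2 in the proof of that proposition, the failure of $\intercal\inv$ to be monotone being confined to the $\2$-coordinate that $\pi_1$ discards.
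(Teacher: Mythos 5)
Your proof is correct. Bear in mind that the paper records this statement as an \emph{Observation} and supplies no proof at all, so there is no argument to compare against line by line; what you have written is exactly the justification the paper leaves implicit, built from the same three ingredients the surrounding text provides: the decompositions $W_a'\cap W^{\leq\ov wa}=W_2\cup W_3$ and $W_a\cap W^{\leq\ov wa}=W_1\cup W_4$ of Observation~\ref{obs:BruhatPartitionfacts}, the order-isomorphism $m_a\colon W_a\to W_a'$ of Observation~\ref{prop:ma}, and a single use of the lifting property in the only nontrivial case ($w\in W_a'$, $w'\in W_a$) --- the very step that appears as Case 2 in the paper's proof of Proposition~\ref{prop:BruhatPushout}, which makes your closing identification $\Phi=\pi_1\circ\intercal\inv$ an accurate account of where the content lies. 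Your fibre count is also the only place the ``exactly 2-to-1'' claim is genuinely checked, and you were right to verify that $va$ lies in $W^{\leq\ov wa}$ (via $va\in W_1\cup W_4$) rather than merely in $W$. One point you should flag explicitly rather than silently repair: as printed, the statement asserts $W_a'\cap W^{\leq\ov wa}=W_1\cup W_3$, which is a typo for $W_2\cup W_3$ --- indeed $W_1=W_a\cap W^{\leq\ov w}\subseteq W_a$, and $\Phi$ moves every element of $W_1$ --- and your corrected version is the one consistent with the codomain in Notation~\ref{notation:Phi} and with the paper's own later remark, just before Proposition~\ref{remark:ProjSuffices}, that $\Phi$ restricts to the identity on $W_2\cup W_3$.
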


\begin{obs}Suppose $w\in W_a$ and $w'\in W_a'$.  Then $$w'\leq w\Leftrightarrow\Phi(w')\leq\Phi(w).$$\label{lemma:Phi2}\end{obs}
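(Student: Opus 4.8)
The plan is to unwind the definition of $\Phi$ so that the biconditional becomes a statement purely about the Bruhat order on $W$, and then recognize it as a direct instance of the lifting property. Since $w\in W_a$ we have $\Phi(w)=wa$, and since $w'\in W_a'$ we have $\Phi(w')=w'$. Thus the assertion $w'\leq w\Leftrightarrow\Phi(w')\leq\Phi(w)$ is equivalent to showing
$$w'\leq w\ \Longleftrightarrow\ w'\leq wa.$$
By Observation~\ref{prop:DRw}, the standing hypotheses $w\in W_a$ and $w'\in W_a'$ say precisely that $wa<w$ and $w'<w'a$. So the whole content is to prove this reduced biconditional under those two conditions.

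For the backward direction I would argue by transitivity: if $w'\leq wa$, then since $wa\leq w$ (because $w\in W_a$), we obtain $w'\leq wa\leq w$, hence $w'\leq w$. For the forward direction, suppose $w'\leq w$. First I would observe that $w'\neq w$, since $w'$ and $w$ lie in the disjoint sets $W_a'$ and $W_a$ respectively (see Notation~\ref{notation:Wa}); hence in fact $w'<w$. Now the three facts $w'<w$, $w'<w'a$, and $wa<w$ are exactly the hypotheses of the lifting property, with the smaller element $w'$ ascending under right multiplication by $a$ and the larger element $w$ descending under right multiplication by $a$. Applying the lifting property yields $w'\leq wa$, which is what was wanted.

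I do not anticipate a genuine obstacle here. The only steps needing care are the reduction through $\Phi$ and the elementary observation that $w'\neq w$; after these, the forward implication is a verbatim application of the lifting property and the backward implication is plain transitivity. The essential conceptual point is simply that this observation is a repackaging of the lifting property in the language of the partition $W=W_a\sqcup W_a'$ and the projection $\Phi$.
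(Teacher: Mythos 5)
Your proof is correct and follows exactly the route the paper intends: the paper records this statement as an unproved Observation, having noted earlier that such facts are ``straightforward applications'' of the standard Bruhat-order properties, and your argument is precisely that---transitivity through $wa\leq w$ for the backward direction, and the lifting property (applied to $w'<w$ with $w'<w'a$ and $wa<w$) for the forward direction. The reduction via $\Phi(w)=wa$, $\Phi(w')=w'$ and the remark that $w'\neq w$ because $W_a$ and $W_a'$ are disjoint are both handled correctly, so there is nothing to add.
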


We remark that $\Phi$ can be viewed as a projection on $W^{\leq\ov wa}$ in that $\Phi^2=\Phi$.

Now suppose $w\in W_4$.  From Observations~\ref{lemma:Phi1} and \ref{lemma:Phi2}, we have now that $w'\leq w$ if and only if $\Phi(w')\leq\Phi(w)$. Further notice that $\Phi(w)\leq w$.

\begin{notation}Let $\tilde \P$ be a partially ordered set with $\tilde \P=\P\cup \P_x$ such that $\P\cap\P_x=\emptyset$.  Suppose that $\P_x$ is an upper set of $\tilde \P$.  Suppose there exists a poset homomorphism $\tilde\Phi:\tilde \P\to \P$ such that the restriction $\tilde\Phi:\P_x\to\tilde\Phi(\P_x)$ is an isomorphism.  Moreover, suppose $\tilde\Phi(p)\leq p$ for all $p\in \P_x$.  Finally, suppose that for all $p'\in \P_x$ and $p\in \P$, $$p\leq p'\Leftrightarrow\tilde\Phi(p)\leq\tilde\Phi(p').$$\label{notation:poset_setup}\end{notation}

\begin{prop}Use the notation of \ref{notation:poset_setup}.  Suppose there exists a poset isomorphism $\nabla:W^{\leq\ov w}\to \P$ such that
\begin{enumerate}[(a)]
\item $\nabla(W_3)=\tilde\Phi(\P_x)$.
\item $\tilde\Phi\nabla=\nabla\Phi$ on $W^{\leq\ov w}$.
\end{enumerate}

Then $\nabla$ extends to an isomorphism $\widetilde\nabla:W^{\leq\ov wa}\to \tilde \P$ given by $$\widetilde\nabla(w)=\begin{cases}\nabla(w),&w\in W^{\leq\ov w}\\
(\tilde\Phi|_{\P_x})\inv\nabla(wa),&\text{else}.
\end{cases}$$\label{prop:IsoExtends}
\end{prop}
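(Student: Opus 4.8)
The plan is to verify directly that the stated formula for $\widetilde\nabla$ defines a well-defined bijection $W^{\leq\ov wa}\to\tilde\P$ that both preserves and reflects the order. First I would check well-definedness on the ``else'' branch, which is exactly the set of new words $W_4=W^{\leq\ov wa}\setminus W^{\leq\ov w}$. For $w\in W_4$ we have $wa\in W_3$ by Observation~\ref{obs:BruhatPartitionfacts} (recall $W_3=m_a(W_4)$), so $\nabla(wa)\in\nabla(W_3)=\tilde\Phi(\P_x)$ by hypothesis (a); hence $\nabla(wa)$ lies in the domain of $(\tilde\Phi|_{\P_x})\inv$ and the formula makes sense. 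Bijectivity then follows from the disjoint decompositions $W^{\leq\ov wa}=W^{\leq\ov w}\sqcup W_4$ and $\tilde\P=\P\sqcup\P_x$: on the first block $\widetilde\nabla$ restricts to the bijection $\nabla\colon W^{\leq\ov w}\to\P$, and on the second it is the composite $W_4\xrightarrow{m_a}W_3\xrightarrow{\nabla}\tilde\Phi(\P_x)\xrightarrow{(\tilde\Phi|_{\P_x})\inv}\P_x$ of three bijections, using that $m_a\colon W_4\to W_3$ is a bijection and that $(\tilde\Phi|_{\P_x})\inv$ is a bijection onto $\P_x$.

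For the order statement I would prove $w\leq w'\Leftrightarrow\widetilde\nabla(w)\leq\widetilde\nabla(w')$ by splitting into four cases according to the membership of $w,w'$ in $W^{\leq\ov w}$ or $W_4$. When both are old words, $\widetilde\nabla=\nabla$ and the equivalence is immediate from $\nabla$ being an isomorphism. When both are new words, I would chain the three isomorphisms used above: $m_a\colon W_4\to W_3$ is an order isomorphism (the restriction of the isomorphism $m_a\colon W_a\to W_a'$ of Observation~\ref{prop:ma}, since $W_4\subseteq W_a$ and $W_3\subseteq W_a'$), $\nabla$ is an isomorphism, and $(\tilde\Phi|_{\P_x})\inv$ is an isomorphism onto $\P_x$, so each ``$\leq$'' is equivalent to the next. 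The case $w\in W_4$, $w'\in W^{\leq\ov w}$ is vacuous in both directions: $w\leq w'$ is impossible because $W_4$ is an upper set of $W^{\leq\ov wa}$, and $\widetilde\nabla(w)\leq\widetilde\nabla(w')$ is impossible because $\widetilde\nabla(w)\in\P_x$, $\widetilde\nabla(w')\in\P$, and $\P_x$ is an upper set of $\tilde\P$.

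The remaining case, $w\in W^{\leq\ov w}$ old and $w'\in W_4$ new, is the crux and the step I expect to be the main obstacle, since it is the only one linking the two blocks. Here $\widetilde\nabla(w)=\nabla(w)\in\P$ and $p':=\widetilde\nabla(w')\in\P_x$, so by the defining property of the setup in Notation~\ref{notation:poset_setup} we have $\nabla(w)\leq p'\Leftrightarrow\tilde\Phi(\nabla(w))\leq\tilde\Phi(p')$. I would then simplify both sides: on the right, $\tilde\Phi(p')=\tilde\Phi\big((\tilde\Phi|_{\P_x})\inv\nabla(w'a)\big)=\nabla(w'a)$, because $\tilde\Phi$ inverts $(\tilde\Phi|_{\P_x})\inv$ on $\tilde\Phi(\P_x)$; on the left, hypothesis (b) gives $\tilde\Phi(\nabla(w))=\nabla(\Phi(w))$. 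Since $\nabla$ is an isomorphism and $\Phi(w),w'a\in W^{\leq\ov w}$, the inequality $\nabla(\Phi(w))\leq\nabla(w'a)$ is equivalent to $\Phi(w)\leq w'a=\Phi(w')$. Finally, $\Phi(w)\leq\Phi(w')\Leftrightarrow w\leq w'$ is precisely the equivalence established in the text for a new word $w'$ and an old word $w$ from Observations~\ref{lemma:Phi1} and~\ref{lemma:Phi2}. Threading these equivalences together yields $w\leq w'\Leftrightarrow\widetilde\nabla(w)\leq\widetilde\nabla(w')$, completing the proof that $\widetilde\nabla$ is an order isomorphism extending $\nabla$.
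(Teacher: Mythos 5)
Your proof is correct and takes essentially the same approach as the paper's: both verify well-definedness and bijectivity blockwise, and both settle the crucial old-word-versus-new-word comparisons by pushing them down through $\tilde\Phi$ and $\Phi$, using the defining property of Notation~\ref{notation:poset_setup}, hypothesis (b), and the equivalence $w\leq w'\Leftrightarrow\Phi(w)\leq\Phi(w')$ from Observations~\ref{lemma:Phi1} and~\ref{lemma:Phi2}. The only difference is organizational: you run these as a single biconditional chain in each case, whereas the paper splits the argument into order-preservation of $\widetilde\nabla$ and of $\widetilde\nabla\inv$, which traverses the same chain of implications in the two directions.
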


\begin{proof}For $w\in W^{\leq\ov wa}$, notice that $w\not\in W^{\leq\ov w}$ is equivalent to saying $w\in W_4$.  It follows that $wa\in W_3$.  Hence, by (a), $\nabla(wa)\in\tilde\Phi(\P_x)$, as needed.

We note $\tilde\nabla$ is a bijection with inverse $$\tilde\nabla\inv:p\mapsto\begin{cases}\nabla\inv(p),&p\in \P\\
m_a\nabla\inv\tilde\Phi(p),&p\in \P_x.
\end{cases}$$

Notice next that for $w\in W_4$, $\tilde\Phi\tilde\nabla(w)=\nabla\Phi(w).$  Hence, $\tilde\Phi\tilde\nabla=\nabla\Phi$ on all of $W^{\leq\ov wa}.$

It is further clear that $\tilde\nabla$ restricts to isomorphisms on both $W^{\leq\ov w}$ and $W_4$.  It remains to check that $\tilde\nabla$ preserves orderings between old and new words.  Since $W_4$ is an upper set, we need only consider $w\in W^{\leq\ov w}$ and $w'\in W_4$ with $w\leq w'$ to ensure $\tilde\nabla$ is order-preserving.

We see \begin{align*}
w\leq w'&\Rightarrow\nabla\Phi(w)\leq\nabla\Phi(w')\\
&\Rightarrow\tilde\Phi\tilde\nabla(w)\leq\tilde\Phi\tilde\nabla(w')\\
&\Rightarrow\tilde\nabla(w)\leq\tilde\nabla(w')\end{align*} since $\tilde\nabla(w')\in\P_x$.

It remains to check that $\tilde\nabla\inv$ preserves order.  We assume $p\in\P$ and $p'\in\P_x$ with $p\leq p'$.  Recalling Lemmas~\ref{lemma:Phi1} and ~\ref{lemma:Phi2}, we see \begin{align*}
p\leq p'&\Rightarrow\nabla\inv\tilde\Phi(p)\leq\nabla\inv\tilde\Phi(p')\\
&\Rightarrow\Phi\tilde\nabla\inv(p)\leq\Phi\tilde\nabla\inv(p')\\
&\Rightarrow \tilde\nabla\inv(p)\leq\tilde\nabla\inv(p')
\end{align*} since $\tilde\nabla\inv(p')\in W_4$.
\end{proof}

Recall that $\Phi$ restricts to the identity map on $W_2\cup W_3$.  We then notice the following simplification.

\begin{prop}In the case where $\tilde\Phi$ is a projection, hypothesis (b) of Proposition~\ref{prop:IsoExtends} can be replaced with the hypothesis that $\tilde\Phi\nabla=\nabla\Phi$ on $W_1\cup W_2$, and the conclusion still holds.\label{remark:ProjSuffices}\end{prop}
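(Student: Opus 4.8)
The plan is to show that, under the assumption that $\tilde\Phi$ is a projection, the seemingly weaker requirement already forces the full hypothesis (b) of Proposition~\ref{prop:IsoExtends}; once (b) is recovered verbatim, that proposition applies without any change to the extension formula for $\widetilde\nabla$, and the conclusion follows immediately. Since $W^{\leq\ov w}=W_1\cup W_2\cup W_3$ and the new hypothesis already supplies $\tilde\Phi\nabla=\nabla\Phi$ on $W_1\cup W_2$, the entire task reduces to verifying this identity on the single remaining piece $W_3$.

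First I would handle the right-hand side. Recall (from the remark preceding the statement, and from Observation~\ref{lemma:Phi1}) that $\Phi$ restricts to the identity on $W_2\cup W_3$; in particular $\Phi(w)=w$ for every $w\in W_3$, so $\nabla\Phi(w)=\nabla(w)$ there. For the left-hand side I would use hypothesis (a), namely $\nabla(W_3)=\tilde\Phi(\P_x)$, which places $\nabla(w)$ inside the image of $\tilde\Phi$ whenever $w\in W_3$.

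The crux is the projection hypothesis. Viewing $\tilde\Phi$ as a self-map of $\tilde\P=\P\cup\P_x$ with image contained in $\P$, the condition $\tilde\Phi^2=\tilde\Phi$ means precisely that $\tilde\Phi$ fixes every point of its own image, in particular every point of $\tilde\Phi(\P_x)$. Combining this with the previous step, for $w\in W_3$ we have $\nabla(w)\in\tilde\Phi(\P_x)$, hence $\tilde\Phi\nabla(w)=\nabla(w)=\nabla\Phi(w)$. Together with the assumed equality on $W_1\cup W_2$, this establishes $\tilde\Phi\nabla=\nabla\Phi$ on all of $W^{\leq\ov w}$, which is exactly hypothesis (b), and Proposition~\ref{prop:IsoExtends} then yields the claimed extension $\widetilde\nabla$.

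I do not expect a substantial obstacle here, as the result is a simplification of Proposition~\ref{prop:IsoExtends} rather than a genuinely new argument. The only real subtlety is reading the word \emph{projection} correctly so that $\tilde\Phi^2=\tilde\Phi$ delivers the fixed-point property on $\tilde\Phi(\P_x)$, and then matching that image against $\nabla(W_3)$ via hypothesis (a); once these two identifications are in place the verification on $W_3$ is immediate, and nothing in the proof of Proposition~\ref{prop:IsoExtends} needs to be reworked.
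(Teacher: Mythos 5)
Your proposal is correct and follows exactly the route the paper intends: the paper states this proposition without a formal proof, but its preceding remark (that $\Phi$ is the identity on $W_2\cup W_3$) together with hypothesis (a) and idempotence of $\tilde\Phi$ is precisely your argument that $\tilde\Phi\nabla(w)=\nabla(w)=\nabla\Phi(w)$ for $w\in W_3$, recovering hypothesis (b) in full. Nothing is missing; your reading of \emph{projection} as $\tilde\Phi^2=\tilde\Phi$, hence fixing the image $\tilde\Phi(\P_x)$, matches the paper's usage.
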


\section{Torus-invariant prime spectra of Cauchon extensions}\label{section:CGLPrelims}

For this section, let $S=R[x;\tau,\delta]$ be a Cauchon extension with respect to a torus $H$.  In particular, $q$ is not a root of unity and so $k$ is an infinite field.  There exists $h_0\in H$ satisfying condition (v) of Definition~\ref{dfn:CauchonExt}

\begin{notation}

Set $L=S[x\inv]$

We will refer to the map $C:R\to L$ from Theorem~\ref{thm:Cauchon} as the
\emph{derivation-deleting homomorphism} or \emph{Cauchon's homomorphism}.  The extension of $C$ to $R[y^{\pm1};\tau]$ will be referred to analogously as an isomorphism.

Denote by $\iota:\spec_HR\to\{\text{ ideals of }S\}$ the canonical extension map $P\mapsto\langle P\rangle_S$.

Given an ideal $I$ of $S$ with $x\not\in I$, set $I_x=\langle I\cup\{x\}\rangle_S$.

Given $P\in\spec_HR$, denote $\langle P\cup\{x\}\rangle_S$ by $P_x$.

This notation will hold for the rest of the section.

\end{notation}

Notice that elements of $S$ can be expressed uniquely $$r_0+r_1x+\cdots r_nx^m$$ for some $m$ with each $r_i\in R$.  For an ideal $P$ of $R$, we denote the right ideal of $S$ generated by $P$ as $PS$ and note elements of $PS$ have the form $r_0+r_1x+\cdots r_mx^m$ for some $m$, with each $r_i\in P$, $i=0,\dots,m$.

If $Q\in\spec S$, then the extension of $Q$ to $L$ is given by $$Q^e=\{rx^{-m}:r\in Q,\ m\in\Z_{\geq0}\}.$$  For $I\in\spec L$, the contraction of $I$ to $S$ is given by $$I^c:=I\cap S=\{rx^m:r\in I,\ m\in\Z_{\geq0}\}.$$


We consider finitely generated positively
filtered algebras generated by the first component of the filtration.  That is, we consider an algebra $A=\union_{l\geq 0}A_l$, where $A$ is generated by $A_1$, and each $A_l$ is a finite-dimensional $k$-subspace of $A$ such that $$A_0\subseteq A_1\subseteq A_2\subseteq\cdots.$$

Further, we have the filtration property, that for $a\in A_s$ and $b\in A_t$, $ab\in A_{s+t}$.

In our examples, we will be interested in pairs $(A,V)$ where $A$ is an algebra and $V$ a finite
dimensional vector space which generates $A$ as an algebra such that $1\in V$. Notice that $A$ is filtered by $V^l$ for $l\geq 0$.\label{remark:CatFiltered}

\begin{prop}Let $A$ be a finitely generated positively filtered algebra as above and consider the
algebraic group $\Aut_{fil}(A)$ of filtered algebra automorphisms
of $A$. Choose a maximal torus $H\subseteq\Aut_{fil}(A)$.

Then the partially ordered set $\spec_H(A)$ is independent of the
choice of maximal torus $H$.
\label{prop:independentoftorus}
\end{prop}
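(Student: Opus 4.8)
The plan is to exploit the fact that all maximal tori of a linear algebraic group are conjugate, and then to transport $H$-invariant primes through the conjugating automorphism. First I would record why $\Aut_{fil}(A)$ is a \emph{linear} algebraic group: a filtered automorphism $\phi$ is determined by its restriction $\phi|_{A_1}$ (since $A$ is generated by $A_1$), and the requirement that a given linear automorphism of $A_1$ extend to a filtered algebra automorphism of $A$ is cut out by polynomial conditions. Thus $\Aut_{fil}(A)$ embeds as a closed subgroup of $GL(A_1)$, so Borel's conjugacy theorem applies: given two maximal tori $H,H'\subseteq\Aut_{fil}(A)$, there is $g\in\Aut_{fil}(A)$ with $H'=gHg\inv$.

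Next I would observe that the conjugating element $g$, being a filtered algebra automorphism of $A$, induces a poset automorphism of $\spec A$ via $I\mapsto g(I)$: it carries ideals to ideals, primes to primes (as $g$ is a ring isomorphism), and preserves inclusions in both directions. The heart of the argument is then a short invariance computation showing that $g$ restricts to a bijection $\spec_H A\to\spec_{H'}A$. Indeed, if $P$ is $H$-invariant and $h'\in H'$, write $h'=ghg\inv$ with $h\in H$; then $h'(g(P))=g(h(P))=g(P)$, so $g(P)$ is $H'$-invariant. Applying the same reasoning to $g\inv$ (which conjugates $H'$ to $H$) shows the map is a bijection. The identical computation works verbatim whether $\spec_H$ denotes $H$-invariant primes or $H$-primes in the Goodearl--Letzter sense, since both are defined purely from the $H$-action and the ideal lattice. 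Combining bijectivity with the order-preservation of $I\mapsto g(I)$ yields the desired poset isomorphism $\spec_H A\cong\spec_{H'}A$.

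The main obstacle I anticipate is the field hypothesis underlying the conjugacy step: Borel's theorem guarantees conjugacy of maximal tori by a \emph{rational} element only over an algebraically closed field (or when the tori are split), whereas the standing hypotheses only guarantee that $k$ is infinite. This matters because the argument genuinely needs $g$ to be a $k$-rational automorphism of $A$ in order for $g(P)$ to again be a $k$-ideal; a conjugating element defined only over $\ov k$ would not suffice. I would resolve this either by invoking the blanket assumption that $k$ is algebraically closed, or by arguing that the maximal tori in question are split and hence $k$-conjugate, so that the conjugating automorphism can be taken in $\Aut_{fil}(A)$ itself.
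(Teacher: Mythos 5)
Your proposal follows essentially the same route as the paper's proof: embed $\Aut_{fil}(A)$ as a closed subgroup of $GL(A_1)$ (using that $A$ is generated by $A_1$), invoke Borel's conjugacy theorem to obtain $g\in\Aut_{fil}(A)$ with $H'=gHg\inv$, and transport primes via the order-preserving bijection $P\mapsto g(P)$. Your closing caveat about the conjugating element being $k$-rational is a fair point that the paper itself glosses over (it cites Borel's Theorem 10.6 without comment on the field), but it is a refinement of, not a departure from, the paper's argument.
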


\begin{proof} Let $A$ be a such a filtered algebra.  Then there is a
natural embedding $\Aut_{fil}(A)$ into the general linear group of
$A_1$, and $\Aut_{fil}A$ can thus be realized as a closed subvariety of $GL(A_1)$ and so an algebraic group.

Suppose $H_1,H_2$ are two maximal tori in $\Aut_{fil}(A)$.  Then
$H_2$ is a conjugate of $H_1$ (\cite[Theorem 10.6]{B69}).  Let $g\in\Aut_{fil}(A)$ so that $gH_1g\inv=H_2$.

Then there is a poset isomorphism $\spec_{H_1}A\to\spec_{H_2}A$ given by $P\mapsto gP$.
\end{proof}

\begin{prop}Assume $k$ is infinite with $A$ as above.  Suppose $H,H'\subseteq\Aut_{fil}(A)$ are tori such that $\spec_HA$ and $\spec_{H'}(A)$ are finite.  Then $\spec_H(A)\cong\spec_{H'}(A)$.\label{prop:independt2}
\end{prop}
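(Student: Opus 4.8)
Looking at this proposition, I need to understand what's being claimed and how it relates to the previous one.

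Let me analyze the situation carefully.
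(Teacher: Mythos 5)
Your proposal contains no mathematical content: it states an intention to analyze the problem but never produces an argument, so there is nothing that could be checked or salvaged. The entire proof is missing.

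For comparison, the paper's argument runs as follows. Embed $H$ in a maximal torus $\tilde H\subseteq\Aut_{fil}(A)$. Since $H\subseteq\tilde H$, one has $\spec_{\tilde H}(A)\subseteq\spec_H(A)$ automatically; the substantive step is the reverse inclusion. Given $P\in\spec_H A$, the abelianness of $\tilde H$ gives $h\tilde h(P)=\tilde h h(P)=\tilde h(P)$ for all $h\in H$, $\tilde h\in\tilde H$, so the whole $\tilde H$-orbit of $P$ lies in $\spec_H A$, which is finite by hypothesis. A finite orbit of a torus acting rationally must be a singleton (this is where \cite[II.2.9]{BG02} and the infiniteness of $k$ enter), hence $\tilde H(P)=P$ and $P\in\spec_{\tilde H}A$. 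Thus $\spec_H A=\spec_{\tilde H}A$, and the same applies to $H'$ inside some maximal torus $\tilde H'$; the conjugacy of maximal tori from Proposition~\ref{prop:independentoftorus} then yields $\spec_H(A)=\spec_{\tilde H}(A)\cong\spec_{\tilde H'}(A)=\spec_{H'}(A)$. Note in particular that the finiteness hypothesis is not decorative: without it the orbit argument collapses, and the statement is precisely calibrated so that the connectedness of the torus converts finiteness of the spectrum into $\tilde H$-invariance of each $H$-prime. Any correct proof must use both this hypothesis and the preceding proposition (or reprove the conjugacy of maximal tori); your proposal engages with neither.
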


\begin{proof}Every torus is contained within a maximal torus.  Suppose $H\subseteq\tilde H$ with $\tilde H$ a maximal torus.  We seek to show that $\spec_H(A)\cong\spec_{\tilde H}(A)$.  The same will then hold for $H'$, at which point the result will follow from Proposition~\ref{prop:independentoftorus}.

Suppose $P\in\spec_HA$.

Notice it follows that $\tilde h(P)\in\spec_HA$ for all $\tilde h\in\tilde H$.  Indeed, for all $h\in H$, $$h\tilde h(P)=\tilde hh(P)=\tilde h(P).$$

Notice that $H\subseteq\tilde H$ implies $\spec_{\tilde H}(A)\subseteq\spec_H(A)$.

Thus the $\tilde H$-orbit of $P$ is finite.  But then the orbit of $\tilde H$ is a singleton by \cite[II.2.9]{BG02}.  Hence, $P\in\spec_{\tilde H}A$.  We thus see $\spec_HA=\spec_{\tilde H}A$.
\end{proof}

Thus we have an isomorphism invariant of finitely generated positively filtered algebras in
the form of a partially ordered set.

Any iterated skew polynomial algebra (e.g. any CGL extension) is a filtered algebra with generating vector space the span of the generators.

\begin{obs}
The algebra $R[y;\tau]$ is also a Cauchon extension with respect to $H$.
\end{obs}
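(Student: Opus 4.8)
The plan is to verify directly the six conditions of Definition~\ref{dfn:CauchonExt} for the Ore extension $R[y;\tau]$, which is simply $R[x;\tau,\delta]$ with the skew derivation replaced by the zero derivation while keeping the same automorphism $\tau$. Since the base ring $R$ is unchanged, it is still noetherian, so $R[y;\tau]$ is again a noetherian Ore extension, and condition (vi) --- that every $H$-prime ideal of $R$ is completely prime --- is a statement about $R$ together with its $H$-action and so carries over verbatim. Conditions (i) and (ii) are immediate: the derivation of $R[y;\tau]$ is $0$, which is trivially locally nilpotent, and the relation $0\cdot\tau = q\,\tau\cdot 0$ holds for any scalar, so we may reuse the same $q$ (not a root of unity) supplied by the Cauchon extension $S$.

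The one genuinely substantive point is to produce the $H$-action of condition (iii), and this is where I expect the only real (if mild) obstacle to lie. Let $\lambda\colon H\to k^\times$ be the character giving the $H$-eigenvalue of $x$ in $S$, so that $h(x)=\lambda(h)x$ for all $h\in H$. I would define an $H$-action on $R[y;\tau]$ by keeping $h|_R$ unchanged and setting $h(y)=\lambda(h)\,y$. To see this extends to an algebra automorphism, one must check compatibility with the defining relation $yr=\tau(r)y$ of $R[y;\tau]$. Applying $h$ to both sides and cancelling the scalar $\lambda(h)$ reduces this to the requirement that $\tau\bigl(h(r)\bigr)=h\bigl(\tau(r)\bigr)$ for all $r\in R$; that is, $\tau$ must commute with the $H$-action on $R$. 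This holds because condition (v) for $S$ furnishes some $h_0\in H$ with $h_0|_R=\tau$, and $H$ is abelian, so $\tau=h_0|_R$ commutes with every $h|_R$.

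With the action in hand the remaining conditions fall out. Condition (iv) holds by construction: $R$ is $H$-stable and $y$ is an $H$-eigenvector, with eigenvalue character $\lambda$ equal to that of $x$. For condition (v) I reuse the same $h_0$; then $h_0|_R=\tau$, and the $h_0$-eigenvalue of $y$ is $\lambda(h_0)$, which is precisely the $h_0$-eigenvalue of $x$ and hence not a root of unity. This completes the verification that $R[y;\tau]$ is a Cauchon extension with respect to $H$. The entire argument is a routine check; the only place demanding care is the well-definedness of the extended $H$-action, which hinges on the commutativity of $\tau$ with the $H$-action --- a consequence of $H$ being abelian together with $\tau\in H|_R$.
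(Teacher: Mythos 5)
Your proposal is correct and matches the paper's treatment: the paper states this as an unproved observation precisely because it is the routine verification of Definition~\ref{dfn:CauchonExt} that you carry out, with $\delta$ replaced by $0$ and the $H$-action extended by making $y$ an eigenvector with the same character as $x$. Your one substantive check --- that the action respects $yr=\tau(r)y$ because $\tau=h_0|_R$ commutes with $H|_R$ by abelianness of $H$ --- is exactly the point the paper leaves implicit.
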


\begin{notation} We fix some useful subsets of torus-invariant prime spectra here.\begin{align*}
(\spec_HS)_{\ni x}&:=\{Q\in\spec_HS:x\in Q\};\\
 (\spec_HS)_{\not\ni x}&:=\{Q\in\spec_HS:x\not\in Q\};\\
 (\spec_HR)_\delta&:=\{P\in\spec_HR:\delta(P)\subseteq P\};\\
 (\spec_HR)_{\supseteq\delta(R)}&:=\{P\in\spec_HR:\delta(R)\subseteq P\}.
 \end{align*}

\end{notation}

\begin{dfn}Let $A$ be an algebra, and $\delta$ a locally nilpotent skew derivation of $A$.  For $0\neq r\in A$, the \emph{$\delta$-rank} of $r$ is the largest integer $k$ so that $\delta^k(r)\neq 0$.

We set $\drank(0):=-1$.\end{dfn}

\begin{prop}There is a poset isomorphism $\tilde{C}:\spec_H(R)\to(\spec_HS)_{\not\ni x}$ given by $$P\mapsto \langle C(P)\rangle_L\intersect
S.$$\label{prop:Cinduced}\end{prop}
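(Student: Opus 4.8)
The plan is to factor the prospective map $\tilde C$ as a composite of three bijections of $H$-prime spectra,
$$\spec_H R \xrightarrow{\ \alpha\ } \spec_H\bigl(R[y^{\pm1};\tau]\bigr) \xrightarrow{\ \beta\ } \spec_H L \xrightarrow{\ \gamma\ } (\spec_H S)_{\not\ni x},$$
and then to read off that the composite sends $P$ to $\langle C(P)\rangle_L\intersect S$. Here $\alpha$ is extension of ideals $P\mapsto\langle P\rangle$, the map $\beta$ is induced by the Cauchon isomorphism $C$ of Theorem~\ref{thm:Cauchon}, and $\gamma$ is contraction along the Ore localization $L=S[x\inv]$. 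Since extension, contraction, and an algebra isomorphism all preserve inclusions, and so do their inverses, each of $\alpha,\beta,\gamma$ is automatically a poset isomorphism once shown to be a bijection; thus the entire order-theoretic content reduces to three bijection statements, and $\tilde C$ will be an isomorphism of posets as a composite of inclusion-preserving bijections with inclusion-preserving inverses.

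For $\beta$ the first task is to verify that $C$ is $H$-equivariant. Applying $h\in H$ to the defining relation $xr=\tau(r)x+\delta(r)$ and using $h(x)=\lambda_h x$ yields the commutation rules $h\tau=\tau h$ on $R$ and $h\delta=\lambda_h\,\delta h$; substituting these into the series $C(r)=\sum_l q_l\,\delta^l\tau^{-l}(r)x^{-l}$ shows $C(h(r))=h(C(r))$ for all $r\in R$. Hence $C$, together with its extension to $R[y^{\pm1};\tau]$ (with $y$ the $H$-eigenvector carried to $x$), is an isomorphism of $H$-algebras, and transport of structure makes $\beta$ a poset bijection taking $\langle P\rangle$ to $\langle C(P)\rangle_L$. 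The map $\gamma$ is the standard localization correspondence: contraction along $S\hookrightarrow L=S[x\inv]$ restricts to an inclusion-preserving bijection between $\spec_H L$ and the $H$-primes of $S$ disjoint from $\{x^n\}$, that is, those not containing $x$ (as $x$ is a unit in $L$), and $H$-equivariance of the localization makes this compatible with the torus.

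The substantive step, and the main obstacle, is $\alpha$. Writing $T=R[y^{\pm1};\tau]$, condition (v) of Definition~\ref{dfn:CauchonExt} provides $h_0\in H$ with $h_0|_R=\tau$, so every $P\in\spec_H R$ is $\tau$-stable; hence $\langle P\rangle=PT=TP=\bigoplus_n Py^n$ is a two-sided $\Z$-graded ideal with $PT\intersect R=P$ and $T/PT\cong (R/P)[y^{\pm1};\ov\tau]$, which is a domain by complete primeness (condition (vi)), so $PT$ is an $H$-prime. The difficulty is surjectivity: one must show every $H$-stable ideal $Q$ of $T$ is extended, $Q=(Q\intersect R)T$. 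Passing to $\ov T=T/(Q\intersect R)T$ with $\ov R=R/(Q\intersect R)$, this reduces to proving that an $H$-stable ideal $Q'$ of $\ov T$ with $Q'\intersect\ov R=0$ must vanish. This is exactly where the non-root-of-unity hypothesis is used: rationality of the $H$-action lets us pick a nonzero $H$-eigenvector $f=\sum_n r_n y^n\in Q'$ with the fewest nonzero terms, and comparing $y$-degrees in $h_0(f)=\mu(h_0)f$ gives $\ov\tau(r_n)=\mu(h_0)\lambda_0^{-n}r_n$, where $\lambda_0=\chi(h_0)$ is not a root of unity. Consequently $yfy\inv=\mu(h_0)\sum_n\lambda_0^{-n}r_ny^n\in Q'$; normalizing so that the lowest term sits in degree $0$ (with top degree $m\geq1$, forced since $Q'\intersect\ov R=0$), the element $\lambda_0^{-m}f-\mu(h_0)\inv\,yfy\inv=\sum_n(\lambda_0^{-m}-\lambda_0^{-n})r_ny^n$ lies in $Q'$, is nonzero because its degree-$0$ coefficient $(\lambda_0^{-m}-1)r_0$ is nonzero, and has strictly fewer terms, contradicting minimality. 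Thus $Q'=0$, giving the inverse bijection $Q\mapsto Q\intersect R$ for $\alpha$. Combining the three maps, $P\mapsto\langle P\rangle\mapsto\langle C(P)\rangle_L\mapsto\langle C(P)\rangle_L\intersect S$ recovers the asserted formula, completing the argument.
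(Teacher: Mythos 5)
Your proposal is correct and takes essentially the same route as the paper's own proof: both factor $\tilde C$ through exactly the chain $\spec_HR\cong\spec_HR[y^{\pm1};\tau]\cong\spec_HL\cong(\spec_HS)_{\not\ni x}$ (extension to the skew-Laurent ring, transport along Cauchon's isomorphism, contraction along the Ore localization), the only real difference being that you supply proofs of the two ingredients the paper merely recalls as known, namely the $H$-equivariance of $C$ and the extension/contraction bijection for $R[y^{\pm1};\tau]$, the latter via a sound $h_0$-eigenvalue argument (your appeal to rationality to produce an eigenvector can even be avoided by applying $h_0$ and conjugation by $y$ directly to a term-minimal element). One caveat, which your write-up shares with the paper's but justifies less adequately: in the localization step, equating the $H$-primes of $S$ disjoint from $\{x^n\}$ with those merely not containing $x$ is \emph{not} a formal consequence of $x$ being a unit in $L$ (that remark only shows contracted primes omit $x$); for a noncommutative prime this equivalence can fail, and here it rests on complete primeness of the relevant $H$-primes (condition (vi) of Definition~\ref{dfn:CauchonExt} together with Cauchon's theorem that this passes to $S$, which is what the paper's phrase ``since all prime ideals of $L$ are completely prime'' is gesturing at), so you should cite that fact at this point rather than the unit remark.
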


\begin{proof}
We first recall $$\spec_H(R)\cong\spec_HR[y^{\pm1};\tau]$$ via
extension of ideals.
As $C$ extends to an isomorphism of algebras $R[y^{\pm1}:\tau]\to L$, $C$ induces an isomorphism
of partially ordered sets $\spec_HR\to\spec_HL$.

Recall also that the set
$\{1,x,x^2\dots\}$ is a right denominator set in $S$, whence $\spec_H L\cong(\spec_HS)_{\not\ni x}$ via $Q\mapsto
Q\intersect S$, since all prime ideals of $L$ are completely prime.

Further, notice that $J\in\spec L$ is $H$-stable if and only if
$J\intersect S$ is $H$-stable.  The result follows.
\end{proof}

\begin{notation}We fix the notation of $\tilde C$ from Proposition~\ref{prop:Cinduced}.
\end{notation}

\begin{obs}For $Q\in\spec_HS$, if $x\in Q$ then $\delta(R)\subseteq Q$.\end{obs}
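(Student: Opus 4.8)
The plan is to read off $\delta(r)$ directly from the defining commutation relation of the Ore extension and to observe that both resulting summands already lie in $Q$ once $x\in Q$. Recall that in $S=R[x;\tau,\delta]$ the variable $x$ satisfies $xr=\tau(r)x+\delta(r)$ for every $r\in R$. Rearranging this identity isolates the derivation term:
$$\delta(r)=xr-\tau(r)x.$$

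First I would fix an arbitrary $r\in R$ and assume $x\in Q$. Since $Q$ is a two-sided ideal of $S$, the product $xr$ lies in $Q$ (as $x\in Q$ and $Q$ is a right ideal), and likewise $\tau(r)x$ lies in $Q$ (as $x\in Q$ and $Q$ is a left ideal). Hence their difference $\delta(r)$ lies in $Q$. Because $r$ was arbitrary, this gives $\delta(R)\subseteq Q$, which is the assertion. Note that $\delta(r)\in R$, so in fact $\delta(R)\subseteq Q\cap R$, and in particular $\delta(R)\subseteq Q$.

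There is essentially no obstacle in the argument: it uses nothing beyond $Q$ being a two-sided ideal containing $x$, so neither the primeness of $Q$ nor its $H$-invariance is needed, and the statement holds for every ideal of $S$ containing $x$. The only point requiring care is the direction of the Ore relation; I am using the left $\tau$-derivation convention $xr=\tau(r)x+\delta(r)$, which is the one compatible with the Cauchon homomorphism $C(r)=\sum_l q_l\delta^l\tau^{-l}(r)x^{-l}$ recorded in Theorem~\ref{thm:Cauchon}. With the opposite convention one would isolate $\delta(r)$ from the symmetric relation and reach the identical conclusion, so the result is insensitive to this choice.
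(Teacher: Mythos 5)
Your proof is correct and is exactly the argument the paper intends: the paper leaves this as an unproved observation, but the identity $\delta(r)=xr-\tau(r)x$ you use is precisely the one invoked in the proof of Proposition~\ref{prop:Crx}, with the same sign/convention. Your added remarks (that primeness and $H$-invariance are irrelevant, and that the statement holds for any two-sided ideal containing $x$) are accurate.
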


\begin{lemma}Let $P\in\spec_HR$.  The following are equivalent.
\begin{enumerate}[(a)]
\item $\delta(P)\subseteq P$.
\item $PS=SP$.
\item $\iota(P)=PS$.
\item $\iota(P)\cap R=P$.
\end{enumerate}

\label{lemma:deltaequiv}
\end{lemma}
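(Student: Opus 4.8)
The plan is to prove the four conditions equivalent by establishing the cycle of implications (a)$\Rightarrow$(b)$\Rightarrow$(c)$\Rightarrow$(d)$\Rightarrow$(a). Throughout I would use two standing facts about $P\in\spec_H R$: that $P$ is a two-sided ideal of $R$, and that $\tau(P)=P$ (hence also $\tau\inv(P)=P$), which holds because $P$ is $H$-stable and $\tau=h_0|_R$ for some $h_0\in H$, so that $\tau$ is an automorphism of $R$. I would also lean on the uniqueness of the expression of an element of $S$ as $\sum_i r_i x^i$ with $r_i\in R$, together with the description $PS=\{\sum_i r_i x^i : r_i\in P\}$ recorded earlier, and on the commutation rule $xr=\tau(r)x+\delta(r)$.

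For (a)$\Rightarrow$(b), I would prove $PS=SP$ by mutual inclusion. The inclusion $SP\subseteq PS$ follows by showing $x^k p\in PS$ for every $p\in P$ by induction on $k$: the base case is immediate, and the inductive step produces coefficients of the form $\tau(q_i)$ and $\delta(q_i)$, which lie in $P$ precisely because $\tau(P)=P$ and, by hypothesis (a), $\delta(P)\subseteq P$. For $PS\subseteq SP$ I would first rewrite the commutation rule as $px = x\,\tau\inv(p)-\delta(\tau\inv(p))$, which is valid because $\tau$ is invertible; since $\tau\inv(p)\in P$ and $\delta(\tau\inv(p))\in P$, this shows $px\in SP$, and an induction on $i$ then gives $px^i\in SP$ for all $p\in P$, whence $PS\subseteq SP$.

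The remaining implications are lighter. For (b)$\Rightarrow$(c): if $PS=SP$, then this common set is simultaneously a right ideal and a left ideal containing $P$, hence a two-sided ideal, so it must equal the two-sided ideal $\iota(P)=\langle P\rangle_S$ (the containment $PS\subseteq\iota(P)$ being automatic). For (c)$\Rightarrow$(d): intersecting $\iota(P)=PS$ with $R$ and using uniqueness of the normal form, the only elements of $PS$ lying in $R$ are the degree-zero ones, namely the elements of $P$, so $\iota(P)\intersect R=P$. For (d)$\Rightarrow$(a): given $p\in P$, both $xp$ and $\tau(p)x$ lie in $\iota(P)$ (the former because $\iota(P)$ is a left ideal, the latter because $\tau(p)\in P$ and $\iota(P)$ is a right ideal), so $\delta(p)=xp-\tau(p)x\in\iota(P)\intersect R=P$, which gives $\delta(P)\subseteq P$.

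I expect the main obstacle to be the inclusion $PS\subseteq SP$ in the step (a)$\Rightarrow$(b): moving a coefficient $p\in P$ from the left of $x$ to its right requires inverting $\tau$ and tracking the $\delta$-correction term carefully, and it is exactly here that the Cauchon-extension hypothesis forcing $\tau$ to be an automorphism is genuinely used. Every other inclusion and implication amounts to routine bookkeeping with the commutation relation and the uniqueness of the normal form.
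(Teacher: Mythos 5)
Your proof is correct and takes essentially the same route as the paper's: the same cycle (a)$\Rightarrow$(b)$\Rightarrow$(c)$\Rightarrow$(d)$\Rightarrow$(a), with mutual inclusion via the commutation rule for (a)$\Rightarrow$(b), minimality of $\iota(P)$ among two-sided ideals containing $P$ for (b)$\Rightarrow$(c), the normal form for (c)$\Rightarrow$(d), and $\delta(p)=xp-\tau(p)x\in\iota(P)\cap R$ for (d)$\Rightarrow$(a). The only difference is one of detail: you spell out the inductions and the use of $\tau\inv$ (justified by $H$-stability of $P$) where the paper compresses this to ``$SP\subseteq PS+\delta(P)=PS$. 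Analogously, $PS\subseteq SP$.''
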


\begin{proof}
Assume (a).  Notice $SP\subseteq PS+\delta(P)=PS$.  Analogously, $PS\subseteq SP$.  This gives us (b).  Then $PS$ is a two-sided ideal with $P\subseteq PS\subseteq\iota(P)$.  Hence, $PS=\iota(P)$, verifying (c).  (d) follows trivially.

Assume (d).  Notice $\delta(P)\subseteq\iota(P)$.  (a) follows.  We thus see (a)-(d) are equivalent.\end{proof}

Recalling that $\tau$ is given by an element of $H$, the following is a well-known result.

\begin{lemma}For $P\in(\spec_HR)_\delta$, $$(R/P)[x';\tau',\delta']\cong S/\iota(P),$$ where $\tau'$ (resp. $\delta'$) is the automorphism (resp. skew derivation) on $R/P$ induced by $\tau$ (resp. $\delta$).\label{lemma:CGL1}\end{lemma}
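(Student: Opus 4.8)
The plan is to construct an explicit surjective ring homomorphism $\pi\colon S\to(R/P)[x';\tau',\delta']$ and identify its kernel with $\iota(P)$. Before building $\pi$, I would first check that $\tau'$ and $\delta'$ are genuinely well-defined on $R/P$. Since $P$ is $H$-stable and $\tau=h_0|_R$ for the element $h_0\in H$ of condition (v) of Definition~\ref{dfn:CauchonExt}, we have $\tau(P)=P$, so $\tau$ descends to an automorphism $\tau'$ of $R/P$. The hypothesis $P\in(\spec_HR)_\delta$ means exactly $\delta(P)\subseteq P$, which is precisely what is needed for $\delta$ to descend to a map $\delta'$ on $R/P$. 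A short computation, using that $\delta$ is a left $\tau$-derivation (so $\delta(rs)=\tau(r)\delta(s)+\delta(r)s$) and that $\tau,\delta$ are compatible with passage to the quotient, confirms that $\delta'$ is a left $\tau'$-derivation; hence the Ore extension $(R/P)[x';\tau',\delta']$ is well-defined.

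To build $\pi$, I would invoke the universal property of the Ore extension $S=R[x;\tau,\delta]$: a ring homomorphism out of $S$ is determined by a ring homomorphism $f$ on $R$ together with an image $t$ for $x$ satisfying $t\,f(r)=f(\tau(r))\,t+f(\delta(r))$ for all $r\in R$. Taking $f$ to be the quotient map $R\to R/P$ followed by the inclusion into $(R/P)[x';\tau',\delta']$, and taking $t=x'$, the required relation is precisely the defining relation $x'\,\ov r=\tau'(\ov r)\,x'+\delta'(\ov r)$ of the target Ore extension, which holds by the very definitions of $\tau'$ and $\delta'$. This yields a unique ring homomorphism $\pi$ with $\pi(r)=\ov r$ and $\pi(x)=x'$. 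Surjectivity is immediate, since the image contains $R/P$ together with $x'$, and these generate the target.

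It remains to compute $\ker\pi$. Since $\pi$ annihilates $P$ and $\ker\pi$ is a two-sided ideal, $\iota(P)\subseteq\ker\pi$. For the reverse inclusion I would use normal forms: writing $s\in S$ uniquely as $\sum_{i}r_ix^i$ with $r_i\in R$, we have $\pi(s)=\sum_i\ov r_i(x')^i$, and the uniqueness of this expression in the Ore extension $(R/P)[x';\tau',\delta']$ forces $\ov r_i=0$, i.e. $r_i\in P$ for every $i$, whenever $\pi(s)=0$. Thus $\ker\pi\subseteq PS$. Finally, because $\delta(P)\subseteq P$, Lemma~\ref{lemma:deltaequiv} gives $\iota(P)=PS$, so $\ker\pi=PS=\iota(P)$, and $\pi$ descends to the desired isomorphism $S/\iota(P)\cong(R/P)[x';\tau',\delta']$.

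As for the main obstacle, the argument is essentially routine bookkeeping, and the one place demanding genuine care is verifying that $\delta'$ is both well-defined and a bona fide $\tau'$-derivation; this is exactly where the hypothesis $\delta(P)\subseteq P$ is consumed. Pinning down the kernel then rests entirely on the uniqueness of the normal form in both $S$ and the target, combined with the identity $\iota(P)=PS$ supplied by Lemma~\ref{lemma:deltaequiv}, so no deeper input is required.
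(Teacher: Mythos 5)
Your proof is correct. The paper does not actually prove this lemma---it is stated as a well-known result, prefaced only by the remark ``Recalling that $\tau$ is given by an element of $H$''---so there is no argument of the paper's to compare against; yours is the standard one, built from the universal property of Ore extensions (available in the cited background \cite{GW89}) plus a kernel computation. You correctly supply exactly the ingredients the paper's framing takes for granted: $H$-stability of $P$ together with $\tau=h_0|_R$ gives $\tau(P)=P$ (this is precisely what the paper's prefatory remark is signalling), the hypothesis $\delta(P)\subseteq P$ gives the descent of $\delta$ to $\delta'$, and Lemma~\ref{lemma:deltaequiv} converts the normal-form description of $\ker\pi$ as $PS$ into the identification $\ker\pi=\iota(P)$.
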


\begin{lemma}If $P\in(\spec_HR)_\delta$, then $\iota(P)\in\spec_HS$.

Moreover, $(\spec_HR)_\delta\cong\iota\left((\spec_HR)_\delta\right)$ as partially ordered sets via $\iota$.  The inverse map is the contraction map, $Q\mapsto Q\cap R$.
\label{lemma:iota}\end{lemma}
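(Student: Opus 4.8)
The plan is to prove the two assertions separately: first that $\iota(P)$ is an $H$-prime ideal of $S$, and then that $\iota$ and the contraction map $Q\mapsto Q\cap R$ are mutually inverse order-preserving bijections between $(\spec_HR)_\delta$ and its image.

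For the first assertion I would invoke Lemma~\ref{lemma:CGL1}, which identifies $S/\iota(P)$ with the Ore extension $(R/P)[x';\tau',\delta']$. Since $P\in\spec_HR$, condition (vi) of Definition~\ref{dfn:CauchonExt} forces $P$ to be completely prime, so $R/P$ is a domain. The induced map $\tau'$ is an automorphism of $R/P$ (as recorded in Lemma~\ref{lemma:CGL1}), hence injective, and an Ore extension of a domain along an injective endomorphism is again a domain; therefore $S/\iota(P)$ is a domain and $\iota(P)$ is completely prime. For $H$-stability I would use that $\iota(P)=\langle P\rangle_S$ and that $H$ acts by algebra automorphisms fixing $S$ and $P$ setwise, so $h(\iota(P))=\langle h(P)\rangle_S=\langle P\rangle_S=\iota(P)$ for every $h\in H$. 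An $H$-stable prime is automatically $H$-prime, and thus $\iota(P)\in\spec_HS$.

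For the poset statement, order-preservation of $\iota$ is immediate, since enlarging the generating set enlarges the generated ideal, and contraction $Q\mapsto Q\cap R$ is order-preserving in general. The key point is that contraction inverts $\iota$ on $(\spec_HR)_\delta$: by Lemma~\ref{lemma:deltaequiv}, the hypothesis $\delta(P)\subseteq P$ is equivalent to $\iota(P)\cap R=P$, so contracting $\iota(P)$ recovers $P$. This shows $\iota$ is injective and that contraction is a two-sided inverse on the image $\iota\left((\spec_HR)_\delta\right)$. Having an order-preserving bijection whose inverse is also order-preserving yields the desired poset isomorphism.

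The only genuinely nontrivial step is establishing that $\iota(P)$ is prime; the rest is bookkeeping built on Lemma~\ref{lemma:deltaequiv}. That step is already prepared by Lemma~\ref{lemma:CGL1}, so the main thing to be careful about is that it is precisely the completely-prime hypothesis on $H$-primes of $R$ (Cauchon condition (vi)) that makes $R/P$, and hence the Ore extension $(R/P)[x';\tau',\delta']$, a domain.
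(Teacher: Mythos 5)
Your proposal is correct and follows essentially the same route as the paper: complete primality of $\iota(P)$ via Lemma~\ref{lemma:CGL1} and the domain property of $(R/P)[x';\tau',\delta']$, $H$-stability from the description of $\iota(P)$ as the ideal generated by the $H$-stable set $P$, and Lemma~\ref{lemma:deltaequiv} to show contraction is a two-sided order-preserving inverse. The only cosmetic difference is that the paper gets $H$-stability by first identifying $\iota(P)=PS$ via Lemma~\ref{lemma:deltaequiv}, whereas you argue directly that $h(\langle P\rangle_S)=\langle h(P)\rangle_S$; both are fine.
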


\begin{proof}We have that $P$ is completely prime in $R$, so $R/P$ is a domain.  Since $\delta(P)\subseteq P$, we see from Lemma~\ref{lemma:CGL1} that $S/\iota(P)\cong(R/P)[x;\tau',\delta']$ is also a domain, and so $\iota(P)$ is completely prime.  Recalling from Lemma~\ref{lemma:deltaequiv} that $\iota(P)=PS$, we see that $\iota(P)$ is also $H$-stable, so $\iota(P)\in\spec_HS$.

Also from Lemma~\ref{lemma:deltaequiv}, we see $\iota(P)\cap R=P$.
\end{proof}

\begin{lemma}

The poset $$(\spec_HR)_{\supseteq\delta(R)}\cong(\spec_HS)_{\ni x}$$  via a poset isomorphism $P\mapsto P_x.$

The inverse map is the contraction map, $Q\mapsto Q\cap R.$
\label{lemma:CGL3}
\end{lemma}

\begin{proof}

Consider $P\in(\spec_HR)_{\supseteq\delta(R)}.$  Set $P'=\iota(P)$.  We have from Lemma~\ref{lemma:iota} that $P'\in\spec_HS$.  Moreover, we see from Lemma~\ref{lemma:CGL1} that $$S/P'\cong (R/P)[x';\tau',\delta']=(R/P)[x';\tau'].$$  It follows that $S/P_x\cong R/P$, and hence $P_x$ is completely prime and so an element of $\spec_HS$.  This verifies the map $P\mapsto P_x$ is well-defined.

That $P\mapsto P_x$ is order-preserving is clear.

Notice that $P_x$ is the set of polynomials in $x$ with constant term in $P$.

We claim that for $P\in(\spec_HR)_{\supseteq\delta(R)}$, $P_x\cap R=P$.

Notice that $P'\subseteq P_x$, and recall $P=P'\cap R$ from Lemma~\ref{lemma:deltaequiv}, whence $P\subseteq P_x\cap R$.

Finally we notice $$R/P\cong S/P_x\cong R/(P_x\cap R).$$  The result follows.
\end{proof}

We recall that at most two ideals in $\spec_HS$ contract to a given ideal in $(\spec_HR)_\delta$.  If $P\in(\spec_HR)_{\supseteq\delta(R)}$, then the two $H$-primes of $S$ which contract to $P$ are $\iota(P)$ and $P_x$, as in Lemmas~\ref{lemma:iota} and \ref{lemma:CGL3}.

We conclude the following:

\begin{obs}Suppose $Q\in(\spec_HS)_{\not\ni x}$ such that $Q$ is not an element of the image of $\iota$.  Then $Q\cap R\in(\spec_HR)_\delta\setminus(\spec_HR)_{\supseteq\delta(R)}.$\end{obs}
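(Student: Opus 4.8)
Write $P := Q \cap R$. Since $R$ is $H$-stable and $Q$ is an $H$-prime of $S$, the contraction $P$ is an $H$-prime of $R$, so $P \in \spec_H R$ and the statement reduces to two assertions: that $\delta(P) \subseteq P$, and that $\delta(R) \not\subseteq P$.

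The plan for the first assertion is a direct computation from the Ore relation. For $r \in P$ one has $\delta(r) = xr - \tau(r)x$ inside $S$ (up to the left/right convention, which does not affect the argument). Since $r \in Q$ and $Q$ is a two-sided ideal, $xr \in Q$. The key point is that $\tau(r) \in P$ as well: recalling that $\tau = h_0|_R$ for some $h_0 \in H$ and that $Q$ and $R$ are both $H$-stable, we get $\tau(P) = h_0(Q \cap R) = h_0(Q) \cap h_0(R) = Q \cap R = P$. Hence $\tau(r)x \in Q$, and therefore $\delta(r) = xr - \tau(r)x \in Q$. As also $\delta(r) \in R$, we conclude $\delta(r) \in Q \cap R = P$, giving $\delta(P) \subseteq P$, i.e. $P \in (\spec_H R)_\delta$. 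Note this argument uses neither $x \notin Q$ nor that $Q$ is outside the image of $\iota$, so in fact \emph{every} $H$-prime of $S$ contracts into $(\spec_H R)_\delta$.

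For the second assertion I argue by contradiction: suppose $\delta(R) \subseteq P$, so that $P \in (\spec_H R)_{\supseteq\delta(R)}$. This is precisely the situation covered by the recollection preceding the statement: at most two $H$-primes of $S$ contract to such a $P$, and they are exactly $\iota(P)$ and $P_x$. Since $Q$ contracts to $P$, it must be one of these two. But $x \in P_x$ while $x \notin Q$, so $Q \neq P_x$; hence $Q = \iota(P)$, contradicting the hypothesis that $Q$ is not in the image of $\iota$. Therefore $\delta(R) \not\subseteq P$, i.e. $P \notin (\spec_H R)_{\supseteq\delta(R)}$, and combining with the first assertion yields $Q \cap R \in (\spec_H R)_\delta \setminus (\spec_H R)_{\supseteq\delta(R)}$.

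The only step requiring genuine (if modest) input is the identity $\tau(P) = P$ underlying the $\delta$-invariance computation; everything else is bookkeeping with the Ore relation together with the fibre description of the contraction map recalled just above. I would flag the $\tau$-invariance as the main thing to get right, since it is the point at which the $H$-stability hypothesis on $Q$ is genuinely used, and without it the coefficient term $\tau(r)x$ need not lie in $Q$.
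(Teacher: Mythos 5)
Your proposal is correct and matches the paper's (implicit) argument: the paper offers no separate proof, treating the observation as an immediate consequence of the recollection directly preceding it --- that the only $H$-primes of $S$ contracting to a given $P\in(\spec_HR)_{\supseteq\delta(R)}$ are $\iota(P)$ and $P_x$ --- and your second part is exactly that contradiction, ruling out $P_x$ because $x\notin Q$ and $\iota(P)$ by hypothesis. Your first part, verifying $\delta(Q\cap R)\subseteq Q\cap R$ directly from $\delta(r)=xr-\tau(r)x$ together with $\tau(Q\cap R)=h_0(Q)\cap h_0(R)=Q\cap R$, is a correct, self-contained check of a standard fact that the paper simply recalls (its statement that contractions of $H$-primes land in $(\spec_HR)_\delta$), so it is a harmless supplement rather than a different route.
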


\begin{prop}\begin{enumerate}[(a)]
\item For all $\tau$-eigenvectors $r\in R$, $C(r)x^{\drank(r)}\in\langle r\rangle_S.$
\item For all $r\in R$ with $\drank(r)\geq 1$, $C(r)x^{\delta\text{-rank}(r)}\in\langle x\rangle_S.$
\end{enumerate}\label{prop:Crx}\end{prop}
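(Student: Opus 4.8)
The plan is to exploit the local nilpotence of $\delta$ to truncate the Cauchon series, and then analyze each term of the resulting polynomial separately. Writing $k=\drank(r)$, note that $\tau$ and its powers preserve $\delta$-rank (since $\delta\tau = q\tau\delta$ gives $\delta^j\tau^{-l} = q^{-jl}\tau^{-l}\delta^j$, and $\tau^{-l}$ is injective), so $\delta^l\tau^{-l}(r)=0$ for $l>k$. Hence $C(r)$ is a finite sum up to $l=k$, and
$$C(r)x^k = \sum_{l=0}^k q_l\,\delta^l\tau^{-l}(r)\,x^{k-l}$$
is a genuine element of $S$: a polynomial in $x$ with coefficients in $R$. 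Both parts then reduce to membership statements about the individual terms.

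For part (b), I would show that every term on the right-hand side lies in $\<x\ip_S$. Each term with $l<k$ carries a factor $x^{k-l}$ with $k-l\geq 1$, hence lies in $\<x\ip_S$ after left-multiplication by its coefficient in $R$. The only term without a visible power of $x$ is the $l=k$ term $q_k\delta^k\tau^{-k}(r)$. Here I invoke the fundamental Ore identity $\delta(s)=xs-\tau(s)x$, valid for all $s\in R$, which exhibits $\delta(s)$ as an element of $\<x\ip_S$ and so gives $\delta(R)\subseteq\<x\ip_S$. Since $k\geq 1$, we have $\delta^k\tau^{-k}(r)=\delta\bigl(\delta^{k-1}\tau^{-k}(r)\bigr)\in\delta(R)\subseteq\<x\ip_S$, and the claim follows. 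This makes transparent why the hypothesis $\drank(r)\geq 1$ is exactly what is needed: it guarantees the constant term is itself a derivative.

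For part (a), I would instead show each term lies in $\<r\ip_S$, the crux being the auxiliary claim that $\delta^l(r)\in\<r\ip_S$ for all $l$, proved by induction on $l$. If $\tau(r)=\lambda r$, the relation $\delta\tau=q\tau\delta$ forces $\tau(\delta(r))=\lambda q^{-1}\delta(r)$, so $\delta(r)$ is again a $\tau$-eigenvector; inductively every $\delta^l(r)$ is one. Then $\delta^{l+1}(r)=x\delta^l(r)-\tau(\delta^l(r))x$ is a difference of a left multiple and a scalar right multiple of $\delta^l(r)$, placing it in $\<\delta^l(r)\ip_S\subseteq\<r\ip_S$ by the inductive hypothesis. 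Finally, since $\tau^{-l}(r)=\lambda^{-l}r$, the coefficient $\delta^l\tau^{-l}(r)=\lambda^{-l}\delta^l(r)$ lies in $\<r\ip_S$, and right-multiplying by $x^{k-l}$ keeps each term in the ideal; summing gives $C(r)x^k\in\<r\ip_S$.

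The main obstacle is the inductive step in part (a). The naive computation $\delta^{l+1}(r)=x\delta^l(r)-\tau(\delta^l(r))x$ requires $\tau(\delta^l(r))\in\<r\ip_S$, which fails in general, as $\tau$ need not preserve the ideal $\<r\ip_S\cap R$. The $\tau$-eigenvector hypothesis resolves this precisely by making $\tau(\delta^l(r))$ a scalar multiple of $\delta^l(r)$. This same point explains the structural asymmetry between the two parts: part (b) needs no eigenvector assumption because $\delta(R)\subseteq\<x\ip_S$ holds unconditionally, whereas part (a) genuinely relies on it.
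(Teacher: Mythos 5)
Your proof is correct and follows essentially the same route as the paper's: truncate the Cauchon series at $l=\drank(r)$, note that every term with $l<\drank(r)$ carries a positive power of $x$, handle the constant term via $\delta(R)\subseteq\langle x\rangle_S$ for (b), and for (a) show $\delta^l(r)\in\langle r\rangle_S$ by induction using the Ore identity. The only difference is that you make explicit the two points the paper treats as immediate — the vanishing $\delta^l\tau^{-l}(r)=0$ for $l>\drank(r)$, and the propagation of the eigenvector property to $\delta^l(r)$ via $\delta\tau=q\tau\delta$, which is precisely what makes the paper's ``easy induction'' legitimate.
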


\begin{proof}

If $\drank(r)=0$, then $C(r)=r\in\langle r\rangle_S$.

If $r$ is a $\tau$-eigenvector, then $\tau(r)\in\langle r\rangle_S$.

Notice that $\delta(r)=xr-\tau(r)x\in\langle x\rangle_S$ and is an element of $\langle r\rangle_S$ if $\tau(r)\in\langle r\rangle_S$.  Notice that $\delta^{m}(r)=x\delta^{m-1}(r)-\delta^{m-1}(r)x$ is an element of $\langle x\rangle_S$ for all $m\geq 1$.  An easy induction shows that $\delta^m(r)\in\langle r\rangle_S$ if $\tau(r)\in\langle r\rangle_S$ for all $m$.

We recall from~\ref{thm:Cauchon} that there are nonzero scalars $q_l$ so that $$C(r)=\sum_{l=0}^\infty q_l\delta^l\tau^{-l}(r)x^{-l},$$ whence $$C(r)x^{\drank(r)}=\sum_{l=0}^{\drank(r)} q_l\delta^l\tau^{-l}(r)x^{\drank(r)-l}.$$

If $\drank(r)\geq 1$, it follows that $C(r)x^{\drank(r)}\in\langle x\rangle_S$, as $\delta(R)\subseteq\langle x\rangle_S$.

If $r$ is a $\tau$-eigenvector, say $\tau(r)=\gamma r$ for some $\gamma\in k^\times$, then $$C(r)x^{\drank(r)}=\sum_{l=0}^\infty q_l\gamma^{-l}\delta^l(r)x^{\drank(r)-l}\in\langle r\rangle_S.$$

This establishes both (a) and (b).\end{proof}

To proceed further, we wish for more hypotheses concerning Cauchon extensions.  To be appropriately applicably, any hypothesis added should hold in the cases where $R$ is a CGL extension and where $R$ is a quotient of a CGL extension.

For the next lemma, and any theorems which use it, we also suppose that $\tau$ is acting diagonalizably on $R$, i.e. $R$ is spanned by $\tau$-eigenvectors.

\begin{lemma}Suppose $\tau$ acts diagonalizably on $R$. Then $\tilde{C}$ agrees with $\iota$ on $(\spec_HR)_\delta.$\label{lemma:CGL6}\end{lemma}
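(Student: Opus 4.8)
The plan is to prove the stronger statement that $\langle C(P)\rangle_L=\langle P\rangle_L$ as two-sided ideals of $L=S[x\inv]$, and then to push this equality down to $S$. Granting it, we immediately get $\tilde C(P)=\langle C(P)\rangle_L\cap S=\langle P\rangle_L\cap S$, so it remains only to identify $\langle P\rangle_L\cap S$ with $\iota(P)$. Here I use that $P\in(\spec_HR)_\delta$: by Lemma~\ref{lemma:deltaequiv}, $\iota(P)=PS$ is a two-sided ideal whose elements are exactly the polynomials $\sum_i r_ix^i$ with every $r_i\in P$, and $x\notin\iota(P)$ since $1\notin P$. As $L$ is the Ore localization of $S$ at the powers of $x$, we have $\langle P\rangle_L=\iota(P)^e$, and if $s\in S$ satisfies $sx^m\in PS$ for some $m\geq0$ then comparing coefficients forces $s\in PS$; hence $\langle P\rangle_L\cap S=\iota(P)^{ec}=PS=\iota(P)$.

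Throughout I rely on two standing facts: $P$ is $\tau$-stable (being $H$-stable with $\tau=h_0|_R$), hence $\tau\inv$-stable, and $P$ is $\delta$-stable by hypothesis. Together these give $\delta^l\tau^{-l}(r)\in P$ for all $r\in P$ and all $l\geq0$. For the inclusion $\langle C(P)\rangle_L\subseteq\langle P\rangle_L$ I would simply read off Cauchon's formula: for $r\in P$,
$$C(r)=\sum_{l=0}^{\infty}q_l\,\delta^l\tau^{-l}(r)\,x^{-l}$$
is a finite sum (local nilpotence of $\delta$), each of whose terms lies in $P\cdot L\subseteq\langle P\rangle_L$. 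So $C(P)\subseteq\langle P\rangle_L$. For $\tau$-eigenvectors this inclusion is also immediate from Proposition~\ref{prop:Crx}(a), which is the natural place for the diagonalizability hypothesis to enter.

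The reverse inclusion $\langle P\rangle_L\subseteq\langle C(P)\rangle_L$ is the heart of the matter, and I would establish $P\subseteq\langle C(P)\rangle_L$ by induction on $\drank$. Since $q_0=1$ (immediate from the formula in Theorem~\ref{thm:Cauchon}), Cauchon's formula rearranges to
$$r=C(r)-\sum_{l\geq1}q_l\,\delta^l\tau^{-l}(r)\,x^{-l}.$$
If $\drank(r)=0$ then $\delta(r)=0$ and $r=C(r)\in\langle C(P)\rangle_L$. In general, from $\delta\tau\inv=q\inv\tau\inv\delta$ one sees that $\delta^k\tau^{-l}(r)$ vanishes exactly when $\delta^k(r)$ does, so $\drank(\tau^{-l}(r))=\drank(r)$ and each summand $\delta^l\tau^{-l}(r)$ with $l\geq1$ is an element of $P$ of $\delta$-rank $\drank(r)-l<\drank(r)$. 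By the inductive hypothesis it lies in $\langle C(P)\rangle_L$, and right multiplication by $x^{-l}$ keeps it there since $\langle C(P)\rangle_L$ is an ideal of $L$. As $C(r)\in\langle C(P)\rangle_L$ trivially, the displayed identity yields $r\in\langle C(P)\rangle_L$, closing the induction.

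The main obstacle I anticipate is organizing this second inclusion: the inductive unwinding of Cauchon's series works only because $\delta^l\tau^{-l}(r)$ stays inside $P$ — which is exactly where the hypothesis $\delta(P)\subseteq P$ is indispensable — and because its $\delta$-rank strictly drops, while one must also keep track that $\langle C(P)\rangle_L$ is a genuine two-sided ideal of the localization $L$ so that multiplying by $x^{-l}$ is legitimate. The descent step $\langle P\rangle_L\cap S=\iota(P)$ is routine localization bookkeeping, but deserves to be stated carefully, as it is the only place where one exploits that $\iota(P)$ is $x$-saturated.
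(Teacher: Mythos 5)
Your proof is correct, and its skeleton is the same as the paper's: both arguments come down to the equality $\langle C(P)\rangle_L=\langle P\rangle_L$, proved by unwinding Cauchon's series in both directions, followed by contraction to $S$. The execution differs in two instructive ways. First, the paper genuinely uses the diagonalizability hypothesis: it chooses a generating set of $P$ consisting of $\tau$-eigenvectors and applies Proposition~\ref{prop:Crx}(a) to each generator, compressing the reverse inclusion into the phrase ``$r\in\langle C(r),\delta(r)\rangle$''. You instead work with arbitrary elements of $P$, using only that $P$ is $\tau$-stable (being $H$-stable, with $\tau=h_0|_R$) and $\delta$-stable, so that every coefficient $\delta^l\tau^{-l}(r)$ of $C(r)$ already lies in $P$; your induction on $\drank$ makes explicit the step the paper leaves implicit. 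The payoff is that your argument never invokes diagonalizability at all, so it proves the lemma for any Cauchon extension with $P\in(\spec_HR)_\delta$ --- a mild strengthening --- whereas the paper's route localizes the computational work in Proposition~\ref{prop:Crx}, which it reuses elsewhere.

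The one place you are thinner than the paper is the descent to $S$. The paper contracts by citing primeness: $\iota(P)\in\spec_HS$ by Lemma~\ref{lemma:iota}, it is disjoint from the denominator set $\{x^m\}$, hence $\iota(P)^{ec}=\iota(P)$ by standard localization theory. Your coefficient comparison ($sx^m\in PS\Rightarrow s\in PS$) is correct, but by itself it only computes $(PS\cdot L)\cap S$; to identify this with $\langle P\rangle_L\cap S$ you still need $PS\cdot L$ to be a two-sided ideal of $L$, i.e.\ $L\cdot PS\subseteq PS\cdot L$, which is not automatic for a right Ore localization. This is the only real gap, and it is a one-line repair: either quote Lemma~\ref{lemma:iota} together with the standing fact that a prime disjoint from the denominators satisfies $Q^{ec}=Q$ (the paper's route), or verify $L\cdot PS\subseteq PS\cdot L$ directly from the identity $x\inv p=\tau\inv(p)x\inv-x\inv\delta\tau\inv(p)x\inv$ by the same induction on $\drank$ that you already use, since $\tau\inv(p)$ and $\delta\tau\inv(p)$ again lie in $P$.
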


\begin{proof}Let $P\in(\spec_HR)_\delta.$  Notice $P$ has a generating set consisting of $\tau$-eigenvectors.

It follows from Proposition~\ref{prop:Crx} that if $P=\langle r_1,\dots,r_n\rangle_R\in(\spec_HR)_\delta$, then $$\langle C(P)\rangle_L=\langle r_1,\dots,r_n\rangle_L,$$ since $r\in\langle C(r),\delta(r)\rangle$ for all $r\in R$.  Lemma~\ref{lemma:iota} tells us $\iota(P)=\langle r_1,\dots,r_n\rangle_S\in\spec_HS$.  And we see $\iota(P)\subseteq \tilde{C}(P)$.

Notice that the extension of $\iota(P)$ to $L$ is $\langle C(P)\rangle_L$; it follows that $\iota(P)=\tilde{C}(P)$, since $\iota(P)$ is prime.\end{proof}

\begin{lemma}Suppose $\tau$ acts diagonalizably on $R$.  If $P\in\spec_HR$, then $\tilde{C}(P)\cap R\subseteq P.$\label{lemma:F(P)capR}
\end{lemma}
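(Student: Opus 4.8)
The plan is to reduce the containment to a leading-coefficient computation in the skew-Laurent realization of $L$. Since $R\subseteq S$, I first observe that
\[\tilde C(P)\cap R=\langle C(P)\rangle_L\cap S\cap R=\langle C(P)\rangle_L\cap R,\]
so it suffices to prove that every element of $R$ lying in the two-sided ideal $\langle C(P)\rangle_L$ already lies in $P$. The point to keep in mind throughout is that the inclusion $R\hookrightarrow L$ is the \emph{canonical} one, whereas $C|_R$ is not the identity; this mismatch is exactly what makes the statement nontrivial, and it is why we expect only $\subseteq$ rather than equality.

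Next I would make $\langle C(P)\rangle_L$ explicit. By Theorem~\ref{thm:Cauchon}, $C$ extends to an algebra isomorphism $C\colon R[y^{\pm1};\tau]\to L$ with $C(y)=x$. Because $\tau=h_0|_R$ for some $h_0\in H$ (Definition~\ref{dfn:CauchonExt}(v)) and $P$ is $H$-invariant, $P$ is $\tau$-stable; hence the two-sided ideal of $R[y^{\pm1};\tau]$ generated by $P$ is $\bigoplus_{i\in\Z}Py^i$. Transporting this across the isomorphism, and using $C(py^i)=C(p)x^i$, gives
\[\langle C(P)\rangle_L=\Big\{\textstyle\sum_i C(p_i)x^i:\ p_i\in P,\ \text{almost all }0\Big\}.\]

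Then I would run a degree argument. Filtering $L$ by the top power of $x$ (writing elements in right normal form $\sum_j r_jx^j$, $r_j\in R$) is legitimate because $\delta$ strictly lowers $x$-degree while $\tau$ preserves it, so the leading coefficient is multiplicative up to a $\tau$-twist. From the formula of Theorem~\ref{thm:Cauchon}, $C(p)=p+\sum_{l\geq1}q_l\delta^l\tau^{-l}(p)x^{-l}$ has $x$-degree $0$ with leading coefficient $p$ (here $q_0=1$). Thus each nonzero summand $C(p_i)x^i$ has $x$-degree exactly $i$, and since distinct $i$ give distinct degrees there can be no top-degree cancellation. Given $0\neq a\in\langle C(P)\rangle_L\cap R$, writing $a=\sum_i C(p_i)x^i$ forces $\deg a=\max\{i:p_i\neq0\}$; as $a\in R$ has $x$-degree $0$, every $p_i$ with $i>0$ vanishes, and comparing degree-$0$ components yields $a=p_0\in P$.

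The main obstacle is really the bookkeeping in the previous step: the whole argument rests on there being \emph{no} cancellation among the leading terms of the $C(p_i)x^i$, which in turn relies on $C(p)$ having leading coefficient precisely $p$ in degree $0$ together with the strict degree separation of the powers $x^i$. Once that is pinned down the conclusion is immediate. I would also remark that this argument uses only $\tau$-stability of $P$ (a consequence of $H$-invariance), so the diagonalizability hypothesis on $\tau$ is not strictly needed for this particular lemma, although assuming it does no harm.
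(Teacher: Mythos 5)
Your proof is correct, but it takes a genuinely different route from the paper's. The paper argues structurally in three lines: setting $P'=\tilde{C}(P)\cap R$, it observes that $P'$ lies in $(\spec_HR)_\delta$ (contractions to $R$ of $H$-primes of $S$ are always $\delta$-stable), so Lemma~\ref{lemma:CGL6} gives $\iota(P')=\tilde{C}(P')$; since $P'\subseteq\tilde{C}(P)$ forces $\iota(P')\subseteq\tilde{C}(P)$, this yields $\tilde{C}(P')\subseteq\tilde{C}(P)$, and the fact that $\tilde{C}$ is a poset isomorphism (Proposition~\ref{prop:Cinduced}) then gives $P'\subseteq P$. You instead bypass that machinery: using $\tau$-stability of $P$ (from $H$-invariance and Definition~\ref{dfn:CauchonExt}(v)) you identify $\langle C(P)\rangle_L$ explicitly as $\bigl\{\sum_i C(p_i)x^i : p_i\in P\bigr\}$ via the skew-Laurent presentation, and then a degree/leading-coefficient comparison in the unique expansion $\sum_j r_jx^j$ of elements of $L$ pins down any element of $\langle C(P)\rangle_L\cap R$ as $p_0\in P$. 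Both arguments are sound; the paper's is shorter given the lemmas already in place, while yours is self-contained and, as you correctly note, never uses diagonalizability of $\tau$ --- the paper's proof genuinely does, since Lemma~\ref{lemma:CGL6} rests on Proposition~\ref{prop:Crx}, which needs generating sets of $\tau$-eigenvectors. So your argument actually establishes the lemma for arbitrary Cauchon extensions, a slightly stronger statement. One small remark: your aside that leading coefficients are ``multiplicative up to a $\tau$-twist'' is not needed --- once the ideal is described explicitly, no further multiplication in $L$ occurs, and the strict degree separation of the summands $C(p_i)x^i$ (each of top degree exactly $i$ with coefficient $p_i$, since $q_0=1$ and the remaining terms of $C(p_i)$ have negative $x$-degree) already rules out cancellation.
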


\begin{proof}Set $P'=\tilde{C}(P)\cap R$.  Notice $\iota(P')\subseteq \tilde{C}(P)$.  Since $P'\in(\spec_HR)_\delta$, Lemma~\ref{lemma:CGL6} tells us that $\iota(P')=\tilde{C}(P')$.

Hence, $\tilde{C}(P')\subseteq \tilde{C}(P)$.  Since $\tilde{C}$ is an isomorphism, $P'\subseteq P$.\end{proof}

\begin{obs}If $\tau$ acts diagonalizably on $R$, then $\tau$ acts diagonalizably on $S$.\end{obs}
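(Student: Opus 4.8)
The plan is to realize the action of $\tau$ on $S$ through the torus element $h_0\in H$ furnished by condition (v) of Definition~\ref{dfn:CauchonExt}, which satisfies $h_0|_R=\tau$ and acts on all of $S$ by an algebra automorphism. The hypothesis ``$\tau$ acts diagonalizably on $R$'' means $R$ is spanned by $\tau$-eigenvectors, and I want to promote this to a spanning set of $h_0$-eigenvectors for $S$. First I would invoke the Ore monomial decomposition already recorded in the text: every element of $S$ is uniquely of the form $r_0+r_1x+\cdots+r_mx^m$ with $r_i\in R$. Hence $S$ is spanned by the monomials $rx^l$ with $r\in R$ and $l\geq 0$, and since $R$ is spanned by $\tau$-eigenvectors, it suffices to take $r$ ranging over such eigenvectors, so that $$S=\Span\{\,rx^l : r\text{ a }\tau\text{-eigenvector},\ l\geq 0\,\}.$$

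The second step is to verify that each such monomial is itself an $h_0$-eigenvector. Because $x$ is an $H$-eigenvector by condition (iv), write $h_0(x)=\mu x$ for some $\mu\in k^\times$, and suppose $h_0(r)=\tau(r)=\lambda r$. Since $h_0$ is an algebra homomorphism, $h_0(x^l)=h_0(x)^l=\mu^l x^l$, and therefore $$h_0(rx^l)=h_0(r)\,h_0(x)^l=\lambda\mu^l\,rx^l.$$ Thus every spanning monomial is an $h_0$-eigenvector, so $S$ is spanned by $h_0$-eigenvectors; that is, $\tau$ acts diagonalizably on $S$.

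The only point requiring care is the identification of the ``action of $\tau$ on $S$'' with the action of $h_0$: $\tau$ per se is defined only on $R$, and it is precisely the eigenvector property of $x$ under the torus (condition (iv)), together with the multiplicativity of $h_0$, that allows the computation to close on the monomial $rx^l$. I do not expect a genuine obstacle here; once the correct extension of $\tau$ is fixed, the statement is an immediate consequence of the Ore basis and the given eigenbasis of $R$.
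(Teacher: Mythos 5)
Your proof is correct and matches the intended argument: the paper states this as an unproved observation, and its later usage (e.g., the proof of Lemma~\ref{lemma:difficult2}, which decomposes elements of ideals of $S$ into $h_0$-eigenvectors) confirms that the $\tau$-action on $S$ is meant to be realized via $h_0$, exactly as you do. Your two steps---spanning $S$ by monomials $rx^l$ with $r$ a $\tau$-eigenvector, then using that $h_0$ is an algebra automorphism fixing $x$ up to a scalar (condition (iv)) to see each monomial is an $h_0$-eigenvector---are precisely the natural reasoning behind the observation.
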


\begin{notation}We fix the following notation for the next several lemmas and observations:
Suppose $\tau$ acts diagonalizably on $R$.

Let $P\in(\spec_HS)_{\not\ni x}$ and $Q\in(\spec_HS)_{\ni x}$.

Set $Q_0=Q\cap R$. Note then that $Q=Q_0+Sx$ and $Q_0\in(\spec_HR)_{\supseteq\delta(R)}.$

Let $\lambda$ denote the inverse of the $h_0$-eigenvalue of $x$, so that $h_0.x=\lambda\inv x.$
\label{notation:difficult}
\end{notation}

\begin{obs}Suppose $a=a_0+a_1x+a_2x^2+\cdots+a_nx^n\in S$ is an $h_0$-eigenvector with eigenvalue $\mu$.  Then $\tau(a_i)=q^{i}\mu a_i$ for all $i$ and $$xa-\mu q^nax=\delta(a_0)+\sum_{i=1}^n[(\mu(\lambda^{i-1}-\lambda^n)a_{i-1}+\delta(a_i)]x^i.$$\label{obs:difficult}
\end{obs}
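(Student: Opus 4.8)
The plan is to prove both assertions by exploiting the uniqueness of the normal-form expansion $a=\sum_{i=0}^n a_ix^i$ with coefficients $a_i\in R$, together with the two structural facts at our disposal: that $h_0$ acts as an algebra automorphism with $h_0|_R=\tau$ and $h_0.x=\lambda^{-1}x$, and that $x$ skew-commutes with $R$ via $xr=\tau(r)x+\delta(r)$. The eigenvalue statement is pure bookkeeping in the torus action, while the displayed identity is a direct commutator computation that I reduce to it.

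First I would record the auxiliary identity $\lambda=q$, which is exactly what reconciles the exponent $q^i$ appearing in the first assertion with the exponents $\lambda^{i-1},\lambda^n$ appearing in the formula. Applying the automorphism $h_0$ to the defining relation $xr=\tau(r)x+\delta(r)$, using $h_0|_R=\tau$ and $h_0.x=\lambda^{-1}x$, and re-expanding $x\tau(r)$ by the same relation, yields the operator identity $\delta\tau=\lambda\tau\delta$. Comparing with condition (ii) of Definition~\ref{dfn:CauchonExt}, namely $\delta\tau=q\tau\delta$, and noting $\tau\delta\neq 0$, forces $\lambda=q$.

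Next I would prove the first assertion. Applying $h_0$ to $a=\sum_i a_ix^i$ and using multiplicativity gives $h_0.a=\sum_i \lambda^{-i}\tau(a_i)x^i$, while the eigenvector hypothesis gives $h_0.a=\mu\sum_i a_ix^i$. Matching coefficients of $x^i$ by uniqueness of the normal form yields $\lambda^{-i}\tau(a_i)=\mu a_i$, that is $\tau(a_i)=\mu\lambda^i a_i=q^i\mu a_i$.

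Finally, for the displayed identity I would expand $xa$ directly: writing $xa_i=\tau(a_i)x+\delta(a_i)$ and substituting $\tau(a_i)=\mu q^i a_i$ gives $xa=\sum_{i=0}^n \mu q^i a_ix^{i+1}+\sum_{i=0}^n\delta(a_i)x^i$, while $\mu q^n ax=\sum_{i=0}^n\mu q^n a_ix^{i+1}$. Subtracting, the $x^{i+1}$ terms combine into $\sum_{i=0}^n\mu(q^i-q^n)a_ix^{i+1}$; the top term $i=n$ vanishes since $q^n-q^n=0$, so after reindexing $j=i+1$ and peeling the $\delta(a_0)$ term off the derivation sum, I recover precisely $\delta(a_0)+\sum_{i=1}^n[\mu(\lambda^{i-1}-\lambda^n)a_{i-1}+\delta(a_i)]x^i$, using $\lambda=q$ to rewrite $q^{i-1}-q^n$ as $\lambda^{i-1}-\lambda^n$. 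The only points requiring care are the index shift and the cancellation of the degree-$(n+1)$ term; there is no genuine obstacle, as the entire argument is a finite computation once $\lambda=q$ and the eigenvalue relation are in hand.
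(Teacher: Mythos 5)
Your computation is correct, and it is essentially the only possible route: the paper states this Observation without proof, and your argument --- apply $h_0$ to the normal form $a=\sum_i a_ix^i$ to extract the coefficientwise eigenvalue relations, then expand $xa$ via $xr=\tau(r)x+\delta(r)$, cancel the top term, and reindex --- is exactly the verification the paper leaves to the reader. Your preliminary step establishing $\lambda=q$ is a genuine clarification rather than a detour: the Observation mixes $q$ (in $\tau(a_i)=q^i\mu a_i$ and in $\mu q^nax$) with $\lambda$ (on the right-hand side), and these reconcile only under that identification, which the paper uses silently (it asserts $h_0x=q^{-1}x$ in the proof of Proposition~\ref{lemma:JcapR}, and writes the very same element as $xa-\mu\lambda^nax$ in Lemma~\ref{lemma:difficult2}).

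One caveat should be made explicit. Your derivation of $\lambda=q$ ends with ``noting $\tau\delta\neq 0$,'' but nothing in Notation~\ref{notation:difficult} or Definition~\ref{dfn:CauchonExt} rules out $\delta=0$, in which case both $\delta\tau=\lambda\tau\delta$ and $\delta\tau=q\tau\delta$ hold vacuously and $\lambda$ need not equal $q$. In that degenerate case the displayed identity remains true with $\lambda$ throughout (your expansion goes through verbatim, all $\delta$-terms being zero), but the parts of the statement carrying $q$ --- namely $\tau(a_i)=q^i\mu a_i$ and the left-hand side $xa-\mu q^nax$ --- can fail. So your proof, like the Observation itself, really operates under the standing assumption $\delta\neq 0$. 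This is harmless for the paper's purposes, since the place where this chain of lemmas is deployed (Proposition~\ref{lemma:JcapR}) disposes of the case $\delta=0$ separately before invoking them, but you should state that assumption rather than assert $\tau\delta\neq 0$ without justification.
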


\begin{lemma}Use the notation of \ref{notation:difficult}.  Let $m>0$ and suppose all elements of $P$ with degree less than $m$ have leading coefficients in $Q_0$.  Then all coefficients of elements of $P$ with degree less than $m$ are in $Q_0$.
\label{lemma:difficult2}
\end{lemma}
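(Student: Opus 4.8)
The plan is to reduce to $h_0$-eigenvectors and then run a descent inside the quotient $\overline S:=S/\iota(Q_0)$, where the derivation disappears, using the hypothesis to bound degrees.

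First I would set up two reductions. Since $Q_0\in(\spec_HR)_{\supseteq\delta(R)}$ we have $\delta(R)\subseteq Q_0$, so $Q_0\in(\spec_HR)_\delta$; Lemmas~\ref{lemma:iota} and~\ref{lemma:CGL1} then give $\iota(Q_0)\in\spec_HS$ and an isomorphism $\overline S\cong(R/Q_0)[x';\tau']$, an Ore extension of the domain $\overline R:=R/Q_0$ whose induced derivation $\delta'$ vanishes (because $\delta(R)\subseteq Q_0$). Writing $\overline{\,\cdot\,}$ for images in $\overline S$, the statement ``all coefficients of $a$ lie in $Q_0$'' is exactly ``$\overline a=0$''. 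Moreover, since $\tau$ acts diagonalizably on $R$, the element $h_0$ acts diagonalizably on $S$ and its eigenspace decomposition respects the $x$-degree; as $P$ is $H$-stable it is the sum of its $h_0$-eigencomponents, each of $x$-degree no larger than the original. Hence it suffices to treat an $h_0$-eigenvector $a=\sum_{i=0}^{n}a_ix^i\in P$ of eigenvalue $\mu$ with $n=\deg a<m$; by hypothesis $a_n\in Q_0$, so $\deg_{\overline S}\overline a\le n-1$.

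Next I would iterate the commutator of Observation~\ref{obs:difficult}, but with a \emph{fixed} scalar tuned to $n$. Put $a^{(0)}=a$ and $a^{(j+1)}=xa^{(j)}-\mu_jq^{\,n}a^{(j)}x\in P$, where $\mu_j$ is the $h_0$-eigenvalue of $a^{(j)}$. Using $\tau\big((a^{(j)})_i\big)=q^{i}\mu_j(a^{(j)})_i$ from Observation~\ref{obs:difficult}, the coefficient at the top degree cancels when $\deg a^{(j)}=n$ and otherwise the degree rises by one, so $\deg_S a^{(j)}\le n<m$ for every $j$. The hypothesis then applies to each $a^{(j)}$, forcing its leading $S$-coefficient into $Q_0$, whence $\deg_{\overline S}\overline{a^{(j)}}\le n-1$. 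A direct computation in $\overline S$ (where $x'\overline r=\tau'(\overline r)x'$) gives
$$\overline{a^{(j+1)}}=\sum_{i}\mu_j\,(q^{i}-q^{n})\,\overline{(a^{(j)})_i}\,(x')^{i+1},$$
and every index $i$ occurring in $\overline{a^{(j)}}$ satisfies $i\le n-1$, so each factor $q^{i}-q^{n}$ is nonzero (as $q$ is not a root of unity) and $\mu_j\neq0$.

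Finally I would combine growth with boundedness. The displayed formula shows that, while $\overline{a^{(j)}}\neq0$, we have $\deg_{\overline S}\overline{a^{(j+1)}}=\deg_{\overline S}\overline{a^{(j)}}+1$; since all these degrees are bounded above by $n-1$, a strictly increasing sequence cannot persist, so $\overline{a^{(J)}}=0$ for some $J$. Reading the same formula backwards, $\overline{a^{(j+1)}}=0$ forces $\mu_j(q^{i}-q^{n})\overline{(a^{(j)})_i}=0$ for all $i$; as $\overline R$ is a domain and the scalars are nonzero for $i\le n-1$, every $\overline{(a^{(j)})_i}$ vanishes, i.e.\ $\overline{a^{(j)}}=0$. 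Descending from $J$ down to $0$ yields $\overline a=0$, which is the eigenvector case; summing over $h_0$-eigencomponents gives the general statement. The main obstacle to a naive induction on degree is precisely that the commutator need not lower the $x$-degree in $S$ (the leading coefficient, though in $Q_0$, may be nonzero), so the argument stalls; passing to $\overline S$ repairs this by turning the step into clean monotonicity—the commutator strictly raises the quotient degree, while the $S$-degree, and hence via the hypothesis the quotient degree, stays bounded.
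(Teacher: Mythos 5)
Your proof is correct, but it takes a genuinely different route from the paper's. The paper works entirely inside $S$ and runs a double induction: an outer induction on the degree $n$ of an $h_0$-eigenvector $a$, and an inner induction on $\drank(a_0)$, using the commutator $c=xa-\mu\lambda^nax$ of Observation~\ref{obs:difficult} to strictly lower the $\delta$-rank of the constant term (the terms $\delta(a_i)$ being absorbed into $Q_0$ since $\delta(R)\subseteq Q_0$), with the base case $a_0=0$ handled by factoring $a=bx$ and appealing to complete primeness of $P$ together with the outer induction. You instead pass to the quotient $\overline S=S/\iota(Q_0)\cong(R/Q_0)[x';\tau']$, where the derivation dies, so that the same commutator (with the scalar frozen at $q^n$) becomes a pure shift-and-rescale operator on quotient coefficients; termination then comes from the clash between strictly increasing $\overline S$-degrees and the bound $n-1$ supplied by the leading-coefficient hypothesis, rather than from any induction on $\drank$. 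Each approach has its merits: yours dispenses with $\delta$-rank, with local nilpotence of $\delta$ as a termination device, and with the $a=bx$ factorization (notably, it also never needs the assumption $P\cap R=0$, which the paper's base case invokes even though it is not among the lemma's stated hypotheses, only becoming available later in the proof of Proposition~\ref{lemma:JcapR}); the paper's argument, on the other hand, stays self-contained in $S$ at the cost of the heavier induction scheme, and its $\drank$-descent template is reused nearly verbatim in Lemma~\ref{lemma:difficult4} and in the main proof of Proposition~\ref{lemma:JcapR}, which is presumably why the author set it up that way.
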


\begin{proof} Since $\tau$ acts diagonalizably on $R$, any element of $P$ can be expressed as a sum of $h_0$-eigenvectors in $P$ with the same degrees.  It thus suffices to prove the claim for any $h_0$-eigenvector $a=a_0+a_1x+\cdots +a_nx^n \in P$ of degree $n<m$.  Let $\mu$ be the $h_0$-eigenvalue of $a$.

We proceed by induction on $n$.

Since $P\cap R=0$, the result is trivial if $n=0$.

We assume $n>0$ and induct on $\drank(a_0)$.  If $\drank(a_0)=-1$, then $a_0=0$, so we can write $a=bx$ for some $h_0$-eigenvector $b\in S$ of degree $n-1$.  By induction on $n$, all coefficients of $b$ lie in $Q_0$.  It follows that all $a_i\in Q_0$.

We next assume $\delta$-rank$(a_0)\geq 0$.  Set $c=xa-\mu\lambda^nax$.  Notice that $c$ is an $h_0$-eigenvector in $P$.  By Observation~\ref{obs:difficult}, we see $c$ has degree at most $n$ and constant term of $\drank$ strictly less than $\drank(a_0)$.  By induction, we see all coefficients of $c$ lie in $Q_0$.  Hence, $$\mu(\lambda^{i-1}-\lambda^n)a_{i-1}+\delta(a_i)\in Q_0$$ for $i=1,\dots,n.$  Since $\mu(\lambda^{i-1}-\lambda^n)\neq 0$, and $\delta(a_i)\in Q_0$, we see $a_0,\dots,a_{n-1}\in Q_0$.  We already have $a_n\in Q_0$ by hypothesis, and so the induction is complete.
\end{proof}

\begin{lemma}Use the notation of \ref{notation:difficult}. Assume not all elements of $P$ have leading coefficients in $Q_0$.  Let $m$ be the smallest nonnegative integer such that some element of $P$ with degree $m$ has leading coefficient outside $Q_0$.  Let $d$ be the minimum $\drank$ for constant terms of $h_0$-eigenvectors in $P$ with degree $m$ and leading coefficient not in $Q_0$. Then $d\geq 0$, and any $h_0$-eigenvector $a\in P$ with $a=a_0+a_1x+\cdots+a_mx^m$ such that the $\drank(a_0)<d$ has $a_i\in Q_0$ for all $i$.
\label{lemma:difficult4}
\end{lemma}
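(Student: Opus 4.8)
The plan is to refine the argument of Lemma~\ref{lemma:difficult2} by carrying an extra induction on $\drank(a_0)$, using the extremal defining property of $d$ to control the single coefficient—the leading coefficient $a_m$—that the earlier lemma does not reach. Throughout I would use that $Q_0\in(\spec_HR)_{\supseteq\delta(R)}$, so $\delta(R)\subseteq Q_0$, and that $P$ is completely prime with $x\notin P$, so $x$ is a non-zero-divisor modulo $P$; in particular $bx\in P$ forces $b\in P$. Since $\tau$, hence $h_0$, acts diagonalizably on $S$, every element of $P$ is a sum of $h_0$-eigenvectors in $P$, so it suffices to argue with eigenvectors. Note also $m\geq 1$, since $P\cap R=0$ rules out a bad leading coefficient in degree $0$.

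First I would establish $d\geq 0$, which amounts to showing that the set over which $d$ is minimized is nonempty and contains no eigenvector with vanishing constant term. Nonemptiness follows by decomposing a degree-$m$ element of $P$ with leading coefficient outside $Q_0$ into $h_0$-eigencomponents: since $Q_0$ is a subspace, some component again has degree $m$ and leading coefficient outside $Q_0$. For the second point, suppose such an eigenvector $a$ had $a_0=0$. Then $a=bx$ with $b\in P$ of degree $m-1$; by minimality of $m$ every element of $P$ of degree less than $m$ has leading coefficient in $Q_0$, so Lemma~\ref{lemma:difficult2} places all coefficients of $b$—in particular $a_m$—in $Q_0$, contradicting the choice of $a$. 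Hence every eligible eigenvector has $a_0\neq 0$, i.e.\ $\drank(a_0)\geq 0$, giving $d\geq 0$.

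For the main statement I would induct on $\drank(a_0)$. If $\deg a<m$ the claim is immediate from Lemma~\ref{lemma:difficult2}, since minimality of $m$ forces the leading coefficient of $a$ into $Q_0$, so assume $\deg a=m$. The base case $\drank(a_0)=-1$ means $a_0=0$, handled as above by writing $a=bx$. For the inductive step with $0\leq\drank(a_0)<d$, form the $h_0$-eigenvector $c=xa-\mu\lambda^m ax\in P$, where $\mu$ is the eigenvalue of $a$. By Observation~\ref{obs:difficult} its constant term is $\delta(a_0)$, of $\drank$ equal to $\drank(a_0)-1$, so either Lemma~\ref{lemma:difficult2} (if $\deg c<m$) or the inductive hypothesis (if $\deg c=m$) puts all coefficients of $c$ in $Q_0$. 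The coefficient of $x^i$ in $c$ is $\mu(\lambda^{i-1}-\lambda^m)a_{i-1}+\delta(a_i)$; since $\delta(a_i)\in\delta(R)\subseteq Q_0$ and $\mu(\lambda^{i-1}-\lambda^m)\neq 0$ for $1\leq i\leq m$ (as $\lambda$ is not a root of unity), I obtain $a_0,\dots,a_{m-1}\in Q_0$.

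The remaining—and genuinely new—point is the leading coefficient $a_m$, which I expect to be the crux. It is handled not by the recursion but by the contrapositive of the definition of $d$: were $a_m\notin Q_0$, then $a$ would be an $h_0$-eigenvector of degree $m$ with leading coefficient outside $Q_0$, forcing $\drank(a_0)\geq d$ and contradicting $\drank(a_0)<d$. Hence $a_m\in Q_0$ as well, completing the induction. The only care needed is bookkeeping—verifying that $c$ stays within scope (its constant term has $\drank<d$ and degree at most $m$) and that the factorization $a=bx$ is legitimate, which is precisely where complete primeness of $P$ together with $x\notin P$ is used.
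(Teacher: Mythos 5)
Your proof is correct and takes essentially the same route as the paper's: the same induction on $\drank(a_0)$ using the eigenvector $c=xa-\mu\lambda^m ax$, the same appeal to the minimality of $d$ to place $a_m$ in $Q_0$, and the same factorization $a=bx$ argument via Lemma~\ref{lemma:difficult2} to rule out $d=-1$. Your extra bookkeeping (decomposition into $h_0$-eigencomponents, explicit use of complete primeness and $x\notin P$ to get $b\in P$) only makes explicit what the paper leaves implicit.
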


\begin{proof}
If $d=-1$, there is an $h_0$-eigenvector $c=c_0+c_1x+\cdots+c_mx^m\in P$ of degree $m$ with $c_m\not\in Q_0$, such that $\drank(c_0)=-1$.  That is, $c_0=0$, and, as in the proof of Lemma~\ref{lemma:difficult2}, we conclude all $c_i\in Q_0$, a contradiction.

Consider now $a$ as in the statement of the lemma.  We wish to show all $a_i\in Q_0$.

We proceed by induction on $\drank(a_0)$.  The case where $\drank(a_0)=-1$ is argued as in the proof of Lemma~\ref{lemma:difficult2}.

So we assume that $0\leq \drank(a_0)<d$.  Notice that $a_m\in Q_0$, by the minimality of $d$.  Let $\mu$ be the $h_0$-eigenvalue of $a$, and consider $c=xa-\mu\lambda^max$, an $h_0$-eigenvector in $P$ with degree at most $m$, and constant term of $\drank$ smaller than that of $a_0$.  By induction, all coefficients of $c$ lie in $Q_0$.  It follows that all $a_0,\dots,a_{m-1}\in Q_0$ as in the proof of Lemma~\ref{lemma:difficult2}.
\end{proof}

\begin{prop}Suppose $\tau$ acts diagonalizably on $R$.

Let $P\in\spec_HS$ and $Q\in(\spec_HS)_{\ni x}.$
\begin{enumerate}[(a)]
\item If $P\cap R=0$, then $P\subseteq Q$
\item Suppose $P\cap R\subseteq Q\cap R$.  Then $P\subseteq Q$.
\end{enumerate}\label{lemma:JcapR}
\end{prop}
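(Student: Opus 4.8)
The plan is to reduce part (b) to part (a) and then to recast part (a) as a statement about constant terms. In the notation of~\ref{notation:difficult} we have $Q=Q_0+Sx$, and since $Sx$ is exactly the set of elements of $S$ with vanishing constant term, $Q$ consists precisely of those elements whose constant term (coefficient of $x^0$) lies in $Q_0$. Hence $P\subseteq Q$ is equivalent to the assertion that every element of $P$ has constant term in $Q_0$; this is the target for part (a).

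For the reduction of (b) to (a), set $P_0=P\cap R$. I would first verify $P_0\in(\spec_HR)_\delta$: it is a completely prime $H$-ideal, being the contraction of the $H$-prime $P$ to the $H$-stable subring $R$, and for $r\in P_0$ one has $\delta(r)=xr-\tau(r)x\in P\cap R=P_0$, since $xr\in P$ and $\tau(r)=h_0(r)\in P_0$. By Lemmas~\ref{lemma:iota} and~\ref{lemma:CGL1}, $\iota(P_0)=P_0S\in\spec_HS$ and $\bar S:=S/\iota(P_0)\cong(R/P_0)[\bar x;\bar\tau,\bar\delta]$ is again a Cauchon extension, with $\bar\tau$ acting diagonalizably because $P_0$ is $\tau$-stable (the diagonalizability hypothesis descends to quotients, as noted before Lemma~\ref{lemma:CGL6}). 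Since $P_0\subseteq Q_0\subseteq Q$ we get $\iota(P_0)\subseteq P\cap Q$, so $\bar P:=P/\iota(P_0)\in\spec_H\bar S$ satisfies $\bar P\cap(R/P_0)=0$ while $\bar Q:=Q/\iota(P_0)\ni\bar x$; applying part (a) in $\bar S$ yields $\bar P\subseteq\bar Q$, i.e.\ $P\subseteq Q$.

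For part (a) the case $x\in P$ is immediate: then $\delta(R)\subseteq P\cap R=0$, so by Lemma~\ref{lemma:CGL3} $P=(P\cap R)_x=\langle x\rangle_S\subseteq Q$. So assume $x\notin P$; by diagonalizability it suffices to show that every $h_0$-eigenvector in $P$ has constant term in $Q_0$. If every element of $P$ has leading coefficient in $Q_0$, then Lemma~\ref{lemma:difficult2}, applied for all $m$, forces every coefficient of every element of $P$ into $Q_0$, and we are done. Otherwise Lemma~\ref{lemma:difficult4} supplies the minimal degree $m$ (necessarily $m\geq 1$, as degree-$0$ elements lie in $P\cap R=0$) and the minimal constant-term $\delta$-rank $d\geq 0$. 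The engine for this case is the auxiliary eigenvector $c=xa-\mu\lambda^n ax\in P$ of Observation~\ref{obs:difficult}: it has degree at most $n$, its constant term $\delta(a_0)$ has $\delta$-rank one smaller than that of $a_0$, and its coefficient of $x^i$ equals $\mu(\lambda^{i-1}-\lambda^n)a_{i-1}+\delta(a_i)$. Because $\delta(a_i)\in\delta(R)\subseteq Q_0$ and $\lambda$ is not a root of unity, the scalar $\mu(\lambda^{i-1}-\lambda^n)$ is nonzero, so for each $i$ the membership $a_{i-1}\in Q_0$ is equivalent to membership of the $x^i$-coefficient of $c$ in $Q_0$; in particular $a_0\in Q_0$ iff the $x^1$-coefficient of $c$ lies in $Q_0$.

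The remaining work, which I expect to be the main obstacle, is to convert this equivalence into the conclusion through a double induction — outermost on degree, innermost downward on the $\delta$-rank of the constant term. The base of the $\delta$-rank induction is precisely Lemma~\ref{lemma:difficult4} (the case $\drank(a_0)<d$, where all coefficients already lie in $Q_0$), and the base of the degree induction is $P\cap R=0$; the inductive step applies the operator $a\mapsto xa-\mu\lambda^n ax$ repeatedly to lower the $\delta$-rank while tracking low-degree coefficients through the formula above, invoking complete primality of $P$ together with $x\notin P$ to strip factors of $x$ whenever a constant term vanishes. The delicate point is that leading coefficients need \emph{not} lie in $Q_0$, so one cannot simply prove that all coefficients lie in $Q_0$; the bookkeeping must isolate the constant-term information and propagate it downward while leaving the higher coefficients uncontrolled.
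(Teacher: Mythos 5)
Your reduction of (b) to (a) via $P_0=P\cap R$ is correct (and in fact more carefully justified than the paper's one-line remark about passing to a quotient), and the easy parts of (a) are also fine: the case $x\in P$, and the case where every element of $P$ has leading coefficient in $Q_0$, which Lemma~\ref{lemma:difficult2} handles. But the heart of (a) is missing, and the plan you sketch for it would not close. Your proposed engine is the commutator operator $T:b\mapsto xb-\mu\lambda^{t}bx$, iterated to lower the $\delta$-rank of the constant term, with complete primality of $P$ used to strip factors of $x$. The trouble is that $T$ replaces the constant term $b_0$ exactly by $\delta(b_0)$, so any membership statement extracted this way concerns $\delta^j(b_0)$ for $j\geq 1$ --- and $\delta^j(b_0)\in\delta(R)\subseteq Q_0$ holds automatically because $x\in Q$, so you learn nothing about $b_0$ itself. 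Equivalently, by your own coefficient equivalence, the statement ``the $x^1$-coefficient of $T(b)$ lies in $Q_0$'' is literally the statement ``$b_0\in Q_0$'', so feeding it into an induction is circular. Moreover, the ``base of the $\delta$-rank induction'' you cite (Lemma~\ref{lemma:difficult4}) exists only in degree exactly $m$; for $t>m$ you have no base case and no mechanism at all for elements whose leading coefficient lies outside $Q_0$.

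The paper's key idea, absent from your proposal, uses ring multiplication rather than commutators with $x$. First one pins down a special $h_0$-eigenvector $a=a_0+\cdots+a_mx^m\in P$ of the minimal degree $m$ with $a_m\notin Q_0$: choosing $a$ to realize the minimal $\delta$-rank $d$ and applying $T$ once, Lemmas~\ref{lemma:difficult2} and~\ref{lemma:difficult4} force $a_0,\dots,a_{m-1}\in Q_0$. Then for an arbitrary $b=b_0+\cdots+b_tx^t\in P$ with $t\geq m$, one forms $c=ba_m-\mu^tq^{mt}b_t\,a\,x^{t-m}\in P$ (the scalar, coming from $\tau(a_m)=q^m\mu a_m$, is chosen so the leading terms cancel), so that $\deg c<t$ and induction on degree applies to $c$. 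The constant term of $c$ is $\sum_i b_i\delta^i(a_m)$, minus $\mu^mq^{m^2}b_ma_0$ when $t=m$; since $\delta^i(a_m)\in Q_0$ for $i>0$ and $a_0\in Q_0$, this yields $b_0a_m\in Q_0$, whence $b_0\in Q_0$ by complete primality of $Q_0$ --- not of $P$. This multiplication-by-$a_m$ step is exactly what transfers constant-term information downward in degree while ``leaving the higher coefficients uncontrolled,'' and it is the piece your double induction cannot replace.
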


\begin{proof}Notice that (b) follows from (a) by passage to the quotient ring $R/Q\cap R$.

We proceed to prove (a).

Notice (a) is clear in the case where $x\in P$ or where $\delta=0$, so we assume $x\not\in P$ and $\delta\neq 0$. Notice that $h_0x=q\inv x$.

Since $x\in Q$, it is sufficient to show that all elements of $P$ have constant term in $Q_0:=Q\cap R$.

If all elements of $P$ have leading coefficients in $Q_0$, the result follows from Lemma~\ref{lemma:difficult2}, so we assume otherwise.  Let $m$ be the smallest nonnegative integer such that some element of $P$ with degree $m$ has leading coefficient outside $Q_0$.  Notice the following observations regarding $m$.

Since $P\cap R=0$, it follows that $m>0$.  There must be some $h_0$-eigenvector of degree $m$ with leading coefficient outside $Q_0$.  From Lemma~\ref{lemma:difficult2}, we observe all coefficients of elements of $P$ with degree less than $m$ lie in $Q_0$.

Choose an $h_0$-eigenvector $a=a_0+a_1x+\cdots+a_mx^m\in P$ with $a_m\neq 0$ and $a_m\not\in Q_0$ such that $a_0$ has minimal $\drank$ $d$.  That is, any element of $P$ of degree $m$ with constant term of $\drank$ less than $d$ has leading coefficient in $Q_0$.  From Lemma~\ref{lemma:difficult4}, we see $d\geq0$.

Let $\mu$ denote the $h_0$-eigenvalue of $a$.  Consider $xa-\mu q^max$, an $h_0$-eigenvector in $P$ with degree at most $m$ and having constant term with $\drank$ strictly less than $d$.  It follows that all coefficients lie in $Q_0$.  As in the proof of Lemma~\ref{lemma:difficult2}, we conclude $a_0,\dots,a_{m-1}\in Q_0$.

We are set up for the main proof.

Let $b=b_0+b_1x+\cdots+b_tx^t\in P$.  We wish to show $b_0\in Q$ and proceed by induction on $t$.  If $t<m$, we are done.  If $t=m$, we consider $ba_m$, noting the leading coefficient is $\mu^mq^{m^2}b_ma_m$, and the constant term is $$\sum_{i=0}^mb_i\delta^i(a_m).$$

Set $c=ba_m-\mu^mq^{m^2}b_ma$, and notice $c\in P$ and the degree of $c$ is less than $m$.  We conclude that all coefficients of $c$ lie in $Q_0$.  In particular, the constant term $$\sum_{i=0}^mb_i\delta^i(a_m)-\mu^mq^{m^2}b_ma_0\in Q_0.$$  Since $\delta^i(a_m)\in Q_0$ for all $i>0$ and $a_0\in Q_0$, we see that $b_0a_m\in Q_0$.  Since $Q_0$ is completely prime, we see $b_0\in Q_0$, as desired.

Now suppose $t>m$.  Consider $$c=ba_m-\mu^tq^{mt}b_tax^{t-m}.$$  Again notice $c\in P$ with degree strictly less than $t$.  By induction, $c_0\in Q_0$.  As above, we conclude $b_0\in Q_0$.
\end{proof}

\begin{notation}Define $\tilde\Psi:\spec_HS\to(\spec_HR)_\delta$ by $P\mapsto P\cap R.$\label{notation:tildePhi}\end{notation}

We collect together relevant facts about $\tilde\Psi$.

\begin{obs}Notice we have established $\tilde\Psi$ is a well-defined surjective homomorphism, as $\tilde\Psi\iota(P)=P$ for all $P\in(\spec_HR)_\delta$ by Lemma~\ref{lemma:iota}.  Moreover $\iota\tilde\Psi(Q)=Q$ for all $Q\in\iota((\spec_HR)_\delta).$

Notice also that $\tilde\Psi$ restricts to an isomorphism $(\spec_HS)_{\ni x}\to(\spec_HR)_{\supseteq\delta(R)}$ by Lemma~\ref{lemma:CGL3}.

We see $\iota\tilde\Psi=\tilde{C}\tilde\Psi$ is a projection.  That $\iota$ agrees with $\tilde{C}$ on the image of $\tilde\Psi$ is Lemma~\ref{lemma:CGL6}.  It is then clear that $\iota\tilde\Psi$ is a projection, in that $(\iota\tilde\Psi)^2=\iota\tilde\Psi.$

A well-known fact is that $\tilde\Psi$ is at most a 2-1 map.  (See for example\cite{BG02} for details.)

For $Q\in\spec_HS$, we notice $\iota\tilde\Psi(Q)\subseteq Q$.

Finally notice that for $P\in\spec_HS$ and $Q\in(\spec_HS)_{\ni x}$, $$P\subseteq Q\Leftrightarrow\tilde\Psi(P)\subseteq\tilde\Psi(Q).$$  This follows from Proposition~\ref{lemma:JcapR}.
\label{obs:tildePsi}
\end{obs}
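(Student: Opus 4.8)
The statement assembles several properties of $\tilde\Psi$, most of which are immediate consequences of the lemmas already proved; the plan is to verify the list item by item, isolating the one genuinely new check and the one item that carries real content. I would begin by confirming that $\tilde\Psi$ is well defined and order-preserving. For $Q\in\spec_HS$ the contraction $Q\cap R$ is an $H$-stable completely prime ideal of $R$ by standard facts, and inclusion is preserved by contraction, so $\tilde\Psi$ is a poset homomorphism into $\spec_HR$. The point requiring verification is that its image lands in $(\spec_HR)_\delta$: for $r\in Q\cap R$ the Ore relation gives $\delta(r)=xr-\tau(r)x$, and since $\tau=h_0|_R$ with $h_0\in H$ and $Q$ is $H$-stable, both $xr$ and $\tau(r)x$ lie in $Q$, so $\delta(r)\in Q\cap R$; thus $Q\cap R\in(\spec_HR)_\delta$.

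Surjectivity onto $(\spec_HR)_\delta$, together with the identities $\tilde\Psi\iota=\mathrm{id}$ and $\iota\tilde\Psi=\mathrm{id}$ on $\iota\left((\spec_HR)_\delta\right)$, then follow directly from Lemma~\ref{lemma:iota}, which supplies $\iota(P)\in\spec_HS$ and $\iota(P)\cap R=P$ for every $P\in(\spec_HR)_\delta$. The remaining structural items I would dispatch in turn. That $\tilde\Psi$ restricts to an isomorphism $(\spec_HS)_{\ni x}\to(\spec_HR)_{\supseteq\delta(R)}$ is exactly Lemma~\ref{lemma:CGL3}, whose inverse is $P\mapsto P_x$. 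For the projection claim, Lemma~\ref{lemma:CGL6} identifies $\iota$ with $\tilde C$ on $(\spec_HR)_\delta$, which is where $\tilde\Psi$ takes values, giving $\iota\tilde\Psi=\tilde C\tilde\Psi$; and $(\iota\tilde\Psi)^2=\iota\tilde\Psi$ because applying $\tilde\Psi$ after $\iota$ to a $\delta$-stable prime returns it unchanged, again by Lemma~\ref{lemma:iota}. The containment $\iota\tilde\Psi(Q)=\langle Q\cap R\rangle_S\subseteq Q$ holds because $Q\cap R\subseteq Q$ and $Q$ is a two-sided ideal. The at-most-$2$-$1$ assertion is the standard fact for such extensions from \cite{BG02} and needs only a citation.

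The one item with real content is the final equivalence, and this is where the weight of the argument sits. The forward implication $P\subseteq Q\Rightarrow\tilde\Psi(P)\subseteq\tilde\Psi(Q)$ is trivial, since contraction is monotone. The reverse implication is precisely Proposition~\ref{lemma:JcapR}(b): assuming $\tilde\Psi(P)=P\cap R\subseteq Q\cap R=\tilde\Psi(Q)$ with $x\in Q$, that proposition yields $P\subseteq Q$. So the difficulty here is entirely offloaded onto Proposition~\ref{lemma:JcapR}, whose proof already absorbed the technical Lemmas~\ref{lemma:difficult2} and~\ref{lemma:difficult4}; in the present observation I would simply invoke it. Consequently the only new labor is the $\delta$-stability check for well-definedness, and the genuine obstacle in the whole chain — establishing the reverse inclusion against an $x$-containing prime — was already discharged in Proposition~\ref{lemma:JcapR}.
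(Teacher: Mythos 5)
Your proposal is correct and follows essentially the same route as the paper: the observation is justified exactly by citing Lemma~\ref{lemma:iota} for surjectivity and the identities $\tilde\Psi\iota=\mathrm{id}$, Lemma~\ref{lemma:CGL3} for the restriction to $(\spec_HS)_{\ni x}$, Lemma~\ref{lemma:CGL6} for $\iota\tilde\Psi=\tilde C\tilde\Psi$, and Proposition~\ref{lemma:JcapR} for the final equivalence, which is precisely how the paper assembles it. Your explicit $\delta$-stability check ($\delta(r)=xr-\tau(r)x\in Q\cap R$ using $\tau=h_0|_R$ and $H$-stability of $Q$) is a small but welcome addition, since the paper leaves the well-definedness of $\tilde\Psi$ implicit.
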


We recall the notation $W_1,\dots,W_4$ of \ref{notation:BruhatPartition} and the map $\Phi$ from \ref{notation:Phi}.

\begin{thm}Suppose $S=R[x;\tau,\delta]$ is a Cauchon extension with $\tau$ acting diagonalizably on $R$ such that there exists an isomorphism $\nabla:W^{\leq \ov w}\to\spec_HR$, where $W$ is a Coxeter group, $a$ a generator, and $\ov w\in W_a'$.  Suppose $\nabla$ satisfies \begin{enumerate}[(H1)]
\item $\nabla(W_3)=(\spec_HR)_{\supseteq\delta(R)}.$
\item For all $w\in W_1\cup W_2$, $\tilde{C}\nabla(w)\cap R=\nabla\Phi(w).$
\end{enumerate}

Then there is a poset isomorphism $\widetilde\nabla:W^{\leq\ov wa}\to\spec_HS$ given by $$w\mapsto\begin{cases}
\tilde{C}\nabla(w),&w\in W^{\leq\ov w}\\
\nabla(wa)_x,&\text{ else}.
\end{cases}$$

\label{thm:cool}
\end{thm}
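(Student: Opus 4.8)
The plan is to realize Theorem~\ref{thm:cool} as a direct application of the abstract poset-extension result Proposition~\ref{prop:IsoExtends}, with the ring-theoretic data supplying the objects demanded by Notation~\ref{notation:poset_setup}. Concretely, I would take $\tilde\P=\spec_HS$ with the partition $\P=(\spec_HS)_{\not\ni x}$ and $\P_x=(\spec_HS)_{\ni x}$, and I would take the map $\tilde\Phi=\iota\tilde\Psi\colon\spec_HS\to(\spec_HS)_{\not\ni x}$, $Q\mapsto\iota(Q\cap R)$. The isomorphism $\nabla$ of the hypothesis maps into $\spec_HR$ rather than into $\P$, so the correct map to feed into Proposition~\ref{prop:IsoExtends} is the composite $\nabla':=\tilde C\nabla\colon W^{\leq\ov w}\to(\spec_HS)_{\not\ni x}$, an isomorphism because $\tilde C$ is one (Proposition~\ref{prop:Cinduced}). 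Keeping these identifications straight is the bookkeeping heart of the argument.

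First I would check that $(\tilde\P,\P,\P_x,\tilde\Phi)$ satisfies Notation~\ref{notation:poset_setup}. Almost every clause is already recorded in Observation~\ref{obs:tildePsi}: that $\P_x$ is an upper set is immediate; that $\tilde\Phi$ is a projection onto $\iota((\spec_HR)_\delta)\subseteq(\spec_HS)_{\not\ni x}$, and that $\tilde\Phi|_{\P_x}$ is an isomorphism onto its image, follow from Lemmas~\ref{lemma:CGL3} and~\ref{lemma:iota}; that $\tilde\Phi(Q)\subseteq Q$ for $Q\in\P_x$ is the inclusion $\iota\tilde\Psi(Q)\subseteq Q$; and the biconditional $Q\subseteq Q'\Leftrightarrow\tilde\Phi(Q)\subseteq\tilde\Phi(Q')$ for $Q\in\P$, $Q'\in\P_x$ follows from the final clause of Observation~\ref{obs:tildePsi} (which rests on Proposition~\ref{lemma:JcapR}) together with the fact that $\iota$ preserves and reflects order on $(\spec_HR)_\delta$.

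Next I would verify hypotheses (a) and (b) of Proposition~\ref{prop:IsoExtends} for $\nabla'$. For (a) I must show $\nabla'(W_3)=\tilde\Phi(\P_x)$; since $\tilde\Phi(\P_x)=\iota((\spec_HR)_{\supseteq\delta(R)})$ and $(\spec_HR)_{\supseteq\delta(R)}\subseteq(\spec_HR)_\delta$, Lemma~\ref{lemma:CGL6} lets me replace $\tilde C$ by $\iota$, so $\nabla'(W_3)=\tilde C\nabla(W_3)=\iota(\nabla(W_3))=\iota((\spec_HR)_{\supseteq\delta(R)})$ by (H1). For (b) I would invoke Proposition~\ref{remark:ProjSuffices}: because $\tilde\Phi$ is a projection it suffices to check $\tilde\Phi\nabla'=\nabla'\Phi$ on $W_1\cup W_2$. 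There $\Phi$ lands in $W_2\subseteq W^{\leq\ov w}$, and the key computation is $\tilde\Phi\nabla'(w)=\iota\bigl(\tilde C\nabla(w)\cap R\bigr)=\iota(\nabla\Phi(w))$ by (H2); since the contraction $\tilde C\nabla(w)\cap R=\tilde\Psi\tilde C\nabla(w)$ automatically lies in $(\spec_HR)_\delta$, Lemma~\ref{lemma:CGL6} gives $\iota(\nabla\Phi(w))=\tilde C\nabla\Phi(w)=\nabla'\Phi(w)$.

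With the hypotheses verified, Proposition~\ref{prop:IsoExtends} produces an isomorphism $\widetilde\nabla\colon W^{\leq\ov wa}\to\spec_HS$, and the last step is to unwind its formula to match the statement. On $W^{\leq\ov w}$ it is $\nabla'(w)=\tilde C\nabla(w)$. For $w\in W_4$ one has $wa\in W_3$ (Observation~\ref{obs:BruhatPartitionfacts}), so $\nabla(wa)\in(\spec_HR)_{\supseteq\delta(R)}$ by (H1), and $(\tilde\Phi|_{\P_x})\inv$ sends $\iota(\nabla(wa))=\tilde C\nabla(wa)$ to the unique $x$-containing prime contracting to $\nabla(wa)$, namely $\nabla(wa)_x$ by Lemma~\ref{lemma:CGL3}; this reproduces the second case of the stated formula. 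The main obstacle, to my mind, is not any single hard estimate—those are all absorbed into Proposition~\ref{lemma:JcapR} and its supporting lemmas—but rather the disciplined tracking of the three maps $\tilde C$, $\iota$, $\tilde\Psi$, and the repeated use of Lemma~\ref{lemma:CGL6} to pass between $\tilde C$ and $\iota$ precisely on the subset $(\spec_HR)_\delta$ where they agree.
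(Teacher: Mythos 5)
Your proposal is correct and takes essentially the same route as the paper's own proof: both instantiate Notation~\ref{notation:poset_setup} with $\P_x=(\spec_HS)_{\ni x}$ and $\tilde\Phi=\iota\tilde\Psi$ (justified by Observation~\ref{obs:tildePsi}), verify hypothesis (a) of Proposition~\ref{prop:IsoExtends} from (H1) and Lemma~\ref{lemma:CGL6}, and use Proposition~\ref{remark:ProjSuffices} to reduce hypothesis (b) to (H2). You even supply slightly more detail than the paper does at the step where (H2) is translated into $\tilde\Phi\nabla'=\nabla'\Phi$ (via $\iota$ and Lemma~\ref{lemma:CGL6}) and where the formula $\nabla(wa)_x$ is unwound via Lemma~\ref{lemma:CGL3}.
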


In the coming proof, we will name prime ideals after the letter $J$ to avoid confusion with posets.

\begin{proof}
The goal is to demonstrate the hypotheses of Proposition~\ref{prop:IsoExtends} are satisfied by $\P=\tilde{C}(\spec_HR)$, $\tilde\P=\spec_HS$, $\P_x=(\spec_HS)_{\ni x}$ and $\tilde\Phi=\iota\tilde\Psi$, in order to show that $\nabla'=\tilde{C}\nabla$ extends to $\tilde\nabla:W^{\leq\ov wa}\to\spec_HS$.

We have that $\P_x$ is an upper set of $\tilde\P$.  Most of the necessary facts about $\tilde\Phi$ are collected in Observation~\ref{obs:tildePsi}.  We see $$\tilde\Phi:\P_x\to\tilde\Phi(\P_x)=\iota((\spec_HR)_{\supseteq\delta(R)})$$ is an isomorphism such that $\tilde\Phi(J)\subseteq J$ for all $J\in\P_x$. Further, we have that for all $J\in\P_x$ and $J'\in\P$, $$J'\subseteq J\Leftrightarrow\tilde\Phi(J')\subseteq\tilde\Phi(J).$$  Finally, we see $\tilde\Phi$ is a projection onto $\iota((\spec_HR)_{\delta})$.

We see \begin{align*}
\nabla'(W_3)&=\tilde{C}((\spec_HR)_{\supseteq\delta(R)})&\text{by (H1)}\\
&=\iota((\spec_HR)_{\supseteq\delta(R)})&\text{(Lemma~\ref{lemma:CGL6})}\\
&=\tilde\Phi(\P_x).
\end{align*}

As for the final hypothesis of Proposition~\ref{prop:IsoExtends}, by Proposition~\ref{remark:ProjSuffices}, since $\tilde\Phi$ is a projection, we need only check that for all $w\in W_1\cup W_2$, $\tilde\Phi\nabla'(w)=\nabla'\Phi(w)$, which is precisely (H2).

Thus the proposition applies and there exists an isomorphism $\tilde\nabla:W^{\leq\ov wa}\to\spec_HS$ given by $$w\mapsto\begin{cases}
\tilde{C}\nabla(w),&w\in W^{\leq\ov w}\\
\nabla(wa)_x&\text{else}.
\end{cases}$$
\end{proof}

\begin{notation}Set\begin{align*}
\P_3&=(\spec_HR)_{\supseteq\delta(R)}\\
\P_2&=(\spec_HR)_\delta\setminus P_3\\
\P_1&=(\spec_HR)\setminus (P_2\cup P_3)
\end{align*}
\end{notation}

\begin{thm}
Suppose $S=R[x;\tau,\delta]$ is a Cauchon extension with $\tau$ acting diagonalizably on $R$ such that there exists an isomorphism $\nabla:W^{\leq \ov w}\to\spec_HR$, where $W$ is a Coxeter group, $a$ a generator, and $\ov w\in W_a'$.

Suppose that for $i=1,2,3$, $\nabla(W_i)=\P_i$.  Then $\nabla'=\tilde{C}\nabla$ extends to an isomorphism $\tilde\nabla:W^{\leq\ov wa}\to\spec_HS$.\label{thm:main}
\end{thm}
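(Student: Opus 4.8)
The plan is to derive Theorem~\ref{thm:main} from Theorem~\ref{thm:cool} by showing that the single hypothesis $\nabla(W_i)=\P_i$ for $i=1,2,3$ forces both (H1) and (H2). Hypothesis (H1) is immediate: it is precisely the case $i=3$, since $\P_3=(\spec_HR)_{\supseteq\delta(R)}$. The entire burden therefore falls on (H2), namely that $\tilde C\nabla(w)\cap R=\nabla\Phi(w)$ for all $w\in W_1\cup W_2$, and I would split this according to whether $w\in W_2$ or $w\in W_1$.

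For $w\in W_2$ the verification is routine. Here $w\in W_a'$, so $\Phi(w)=w$ by the definition in Notation~\ref{notation:Phi}, while $\nabla(w)\in\P_2\subseteq(\spec_HR)_\delta$. Lemma~\ref{lemma:CGL6} gives $\tilde C\nabla(w)=\iota\nabla(w)$, and Lemma~\ref{lemma:iota} gives $\iota\nabla(w)\cap R=\nabla(w)$; combining these, $\tilde C\nabla(w)\cap R=\nabla(w)=\nabla\Phi(w)$.

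The case $w\in W_1$ is the crux, and my strategy is to characterize both sides of (H2) as the \emph{same} maximum. On the Bruhat side, for $w\in W_1\subseteq W_a$ one has $\Phi(w)=wa\in W_2$, and I would use Observation~\ref{lemma:Phi2} to show $$wa=\max\{\,v\in W_2\cup W_3:v\leq w\,\}.$$ Indeed, for $v\in W_2\cup W_3\subseteq W_a'$, Observation~\ref{lemma:Phi2} gives $v\leq w\Leftrightarrow v=\Phi(v)\leq\Phi(w)=wa$, so the set in question equals $\{v\in W_2\cup W_3:v\leq wa\}$, whose maximum is $wa$. Applying the order-isomorphism $\nabla$ and using $\nabla(W_2\cup W_3)=\P_2\cup\P_3=(\spec_HR)_\delta$, this translates to $$\nabla(wa)=\max\{\,P''\in(\spec_HR)_\delta:P''\subseteq\nabla(w)\,\}.$$ On the ring side, I would show $\tilde C\nabla(w)\cap R$ is exactly this maximal $\delta$-stable prime below $\nabla(w)$. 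It is $\delta$-stable because it lies in the image of the contraction map $\tilde\Psi$ of Notation~\ref{notation:tildePhi}, and it is contained in $\nabla(w)$ by Lemma~\ref{lemma:F(P)capR}. For maximality, given any $P''\in(\spec_HR)_\delta$ with $P''\subseteq\nabla(w)$, applying the isomorphism $\tilde C$ and contracting gives $\tilde C(P'')\cap R\subseteq\tilde C\nabla(w)\cap R$; but $\tilde C(P'')\cap R=\iota(P'')\cap R=P''$ by Lemmas~\ref{lemma:CGL6} and~\ref{lemma:iota}, so $P''\subseteq\tilde C\nabla(w)\cap R$. Hence $\tilde C\nabla(w)\cap R=\nabla(wa)=\nabla\Phi(w)$, which completes (H2), and Theorem~\ref{thm:cool} then yields the desired extension $\tilde\nabla$.

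The main obstacle is the $W_1$ case, specifically the matching of two a priori unrelated maxima: the combinatorial fact that $wa$ is the largest element of $W_2\cup W_3$ below $w$, and the ring-theoretic fact that $\tilde C\nabla(w)\cap R$ is the largest $\delta$-stable $H$-prime below $\nabla(w)$. Everything else is bookkeeping, but identifying these two maxima is where the hypothesis $\nabla(W_i)=\P_i$ is genuinely used, since it is precisely what aligns the Bruhat partition with the $\delta$-stability stratification of $\spec_HR$.
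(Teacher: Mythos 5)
Your proposal is correct, and it follows the paper's skeleton (reduce to Theorem~\ref{thm:cool}; (H1) is the $i=3$ hypothesis; split (H2) into the $W_2$ and $W_1$ cases, with $W_2$ routine via Lemmas~\ref{lemma:deltaequiv}, \ref{lemma:iota} and~\ref{lemma:CGL6}), but it finishes the crucial $W_1$ case by a genuinely different argument. The paper argues as follows: since $l(wa)=l(w)-1$, the relation $wa\leq w$ is a covering relation, so $\nabla(wa)\subseteq\nabla(w)$ is a \emph{saturated} chain in $\spec_HR$; Lemma~\ref{lemma:F(P)capR} plus the fact that $\tilde C\nabla(w)\cap R$ is $\delta$-stable while $\nabla(w)\in\P_1$ is not gives the strict inclusion $\tilde C\nabla(w)\cap R\subsetneq\nabla(w)$; and the computation $\iota\nabla(wa)=\tilde C\nabla(wa)\subseteq\tilde C\nabla(w)$, contracted to $R$, gives $\nabla(wa)\subseteq\tilde C\nabla(w)\cap R$, so saturation forces equality. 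You instead characterize both sides as the \emph{maximum} $\delta$-stable $H$-prime contained in $\nabla(w)$: on the Bruhat side via Observation~\ref{lemma:Phi2} (i.e., the lifting property) and the hypothesis $\nabla(W_2\cup W_3)=(\spec_HR)_\delta$, and on the ring side by running the paper's contraction computation uniformly over all $P''\in(\spec_HR)_\delta$ below $\nabla(w)$ rather than only at $P''=\nabla(wa)$. The two routes trade combinatorial inputs: the paper leans on the chain property (gradedness) of Bruhat order and needs the strictness of Lemma~\ref{lemma:F(P)capR}'s inclusion, while yours leans on the lifting property and needs only the non-strict inclusion. Your version is slightly longer but arguably more conceptual, since it identifies $\tilde C\nabla(w)\cap R$ intrinsically as the largest $\delta$-stable $H$-prime under $\nabla(w)$ --- in effect re-deriving on $\P_1$ that $\iota\tilde\Psi$ of Observation~\ref{obs:tildePsi} is the poset-theoretic projection matching $\Phi$ of Notation~\ref{notation:Phi}, which is exactly the alignment Proposition~\ref{prop:IsoExtends} exploits.
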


\begin{proof}Assume the hypotheses.

In particular, (H1) of Theorem~\ref{thm:cool} is then satisfied.

We wish to demonstrate (H2), that $\tilde{C}\nabla(w)\cap R=\nabla\Phi(w)$ for all $w\in W_1\cup W_2$.

For $w\in W_2$, this is clear from Lemmas~\ref{lemma:deltaequiv} and \ref{lemma:CGL6}, since $\nabla(w)$ is assumed to be $\delta$-invariant.

Suppose $w\in W_1$.  Hence, $wa\in W_2$ with $wa\leq w$.  It follows that $\nabla(wa)\subseteq\nabla(w)$ is a saturated chain with $\nabla(wa)\in\P_2$ and $\nabla(w)\in\P_1$.  We see from Lemma~\ref{lemma:F(P)capR} that $\tilde C\nabla(w)\cap R\subsetneq\nabla(w)$ since $\nabla(w)\not\in(\spec_HR)_\delta.$  It thus suffices to show that $\nabla(wa)\subseteq\tilde C\nabla(w)\cap R.$

We see $$\iota\nabla(wa)=\tilde{C}\nabla(wa)\subseteq \tilde{C}\nabla(w).$$  Hence, $$\nabla(wa)=\iota\nabla(wa)\cap R\subseteq \tilde{C}\nabla(w)\cap R,$$ as desired.
\end{proof}

\begin{cor}In the notation of Theorems~\ref{thm:cool} and \ref{thm:main}, we see the following diagram commutes:\begin{center}

\end{center}\label{cor:projections}
\end{cor}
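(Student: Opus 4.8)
The plan is to reduce the claimed commutativity to the single identity $\tilde\Psi\tilde\nabla=\nabla\Phi$, which is exactly the assertion that the square commutes, the two paths from $W^{\leq\ov wa}$ to $\spec_HR$ being $\tilde\Psi\circ\tilde\nabla$ and $\nabla\circ\Phi$. Rather than unwind the four-case definition of $\tilde\nabla$ against the two-case definition of $\Phi$ by hand, I would extract this identity almost for free from the work already done in proving Theorem~\ref{thm:cool}, and then strip away the two extension maps that separate it from the clean diagram statement.

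First I would recall that Theorem~\ref{thm:cool} is proved by instantiating Proposition~\ref{prop:IsoExtends} with $\P_x=(\spec_HS)_{\ni x}$, $\tilde\Phi=\iota\tilde\Psi$, and $\nabla'=\tilde{C}\nabla$. The proof of that proposition establishes $\tilde\Phi\tilde\nabla=\nabla'\Phi$ on all of $W^{\leq\ov wa}$; in the present notation this reads
\[
\iota\tilde\Psi\tilde\nabla=\tilde{C}\nabla\Phi
\]
as maps $W^{\leq\ov wa}\to\spec_HS$. This is the key relation already in hand, so the remaining task is purely to cancel the $\iota$ on the left and to convert the $\tilde{C}$ on the right into an $\iota$ as well.

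The cancellation is where one must be careful. For every $w\in W^{\leq\ov wa}$ the element $\Phi(w)$ lies in $W_2\cup W_3$ by Notation~\ref{notation:Phi} and Observation~\ref{lemma:Phi1}, so the hypothesis $\nabla(W_i)=\P_i$ gives $\nabla\Phi(w)\in\P_2\cup\P_3=(\spec_HR)_\delta$. On $(\spec_HR)_\delta$, Lemma~\ref{lemma:CGL6} tells us $\tilde{C}$ and $\iota$ agree, so $\tilde{C}\nabla\Phi=\iota\nabla\Phi$ and the displayed identity becomes $\iota\tilde\Psi\tilde\nabla=\iota\nabla\Phi$. Finally, both $\tilde\Psi\tilde\nabla(w)$ and $\nabla\Phi(w)$ lie in $(\spec_HR)_\delta$ — the first because $\tilde\Psi$ takes values there by Notation~\ref{notation:tildePhi}, the second as just noted — and Lemma~\ref{lemma:iota} says $\iota$ is injective on $(\spec_HR)_\delta$, its inverse there being contraction. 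Cancelling $\iota$ then yields $\tilde\Psi\tilde\nabla=\nabla\Phi$, which is the sought commutativity.

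The main obstacle I anticipate is precisely this bookkeeping mismatch: the inductive machinery is phrased in terms of the \emph{extension} map $\tilde\Phi=\iota\tilde\Psi$, whereas the diagram involves the bare contraction $\tilde\Psi$, and the two transition maps $\iota$ and $\tilde{C}$ are only well-behaved (equal, and injective) on the sub-poset $(\spec_HR)_\delta$; the argument hinges on checking that everything in sight actually lands in $(\spec_HR)_\delta$. If one preferred to avoid invoking the internals of the earlier proofs, the same conclusion follows from a direct four-case check over $W_1,W_2,W_3,W_4$: for $w\in W_2\cup W_3$ one uses $\Phi(w)=w$ together with Lemmas~\ref{lemma:CGL6} and~\ref{lemma:iota}; for $w\in W_1$ one uses $\Phi(w)=wa\in W_2$ and hypothesis (H2); and for $w\in W_4$ one uses $\Phi(w)=wa\in W_3$ together with Lemma~\ref{lemma:CGL3}, whose inverse to $P\mapsto P_x$ is exactly contraction.
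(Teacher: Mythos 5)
Your proof is correct. For comparison: the paper states this corollary with no proof at all (the diagram is even missing from the source), treating commutativity as immediate from the construction of $\tilde\nabla$; your argument supplies precisely the verification that is left implicit. Extracting the identity $\iota\tilde\Psi\tilde\nabla=\tilde C\nabla\Phi$ from the proof of Proposition~\ref{prop:IsoExtends} as instantiated in Theorem~\ref{thm:cool}, and then cancelling $\iota$ via Lemmas~\ref{lemma:CGL6} and~\ref{lemma:iota} after checking that both $\tilde\Psi\tilde\nabla(w)$ and $\nabla\Phi(w)$ land in $(\spec_HR)_\delta$, is sound --- and the care you take at the cancellation step is exactly where the content lies. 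One mild caveat: the global identity $\tilde\Phi\tilde\nabla=\nabla'\Phi$ is established in the proof of Proposition~\ref{prop:IsoExtends} under hypothesis (b) on all of $W^{\leq\ov w}$, whereas Theorem~\ref{thm:cool} verifies that hypothesis only on $W_1\cup W_2$ and invokes Proposition~\ref{remark:ProjSuffices}; since $\tilde\Phi$ is a projection and $\nabla'(W_3)=\tilde\Phi(\P_x)$, hypothesis (b) on $W_3$ is automatic, so the global identity does hold, but this half-step deserves mention if you lean on the internals of that proof. Your fallback three-case check ($W_2\cup W_3$ via Lemmas~\ref{lemma:CGL6} and~\ref{lemma:iota}, $W_1$ via (H2), $W_4$ via Lemma~\ref{lemma:CGL3} and (H1)) sidesteps this dependence entirely and is, if anything, the cleaner route to put in writing.
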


\section{Applications and examples}\label{section:applications}

Recall the notation of Theorem~\ref{thm:main}, which will be used heavily throughout this section.

\begin{prop}Let $W$ be a Coxeter group, $a$ a generator, and choose $\ov w\in W$ with $a\not\in W^{\leq\ov w}$.  Suppose $W^{\leq\ov w}\cong\spec_HR$, and suppose $S=R[x;\tau]$ is a Cauchon extension.

Then $W^{\leq\ov wa}\cong\spec_HS$.\label{prop:delta0}\end{prop}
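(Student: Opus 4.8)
The plan is to apply Theorem~\ref{thm:main} directly, so the real work is checking that its hypotheses hold in this special case where $\delta=0$. Recall that Theorem~\ref{thm:main} requires an isomorphism $\nabla:W^{\leq\ov w}\to\spec_HR$ together with the single numbered condition $\nabla(W_i)=\P_i$ for $i=1,2,3$. We are given such a $\nabla$ by hypothesis, so the task reduces to verifying that the partition $\{\P_1,\P_2,\P_3\}$ of $\spec_HR$ matches the Bruhat partition $\{W_1,W_2,W_3\}$ under $\nabla$. The governing simplification is that $\delta=0$: this collapses the three-part structure dramatically.

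First I would compute the $\P_i$ when $\delta=0$. Since $\delta(R)=0\subseteq P$ for \emph{every} $P\in\spec_HR$, we have $(\spec_HR)_{\supseteq\delta(R)}=\spec_HR$, hence $\P_3=\spec_HR$, and therefore $\P_2=(\spec_HR)_\delta\setminus\P_3=\emptyset$ and $\P_1=\spec_HR\setminus(\P_2\cup\P_3)=\emptyset$. So on the algebra side, everything lands in $\P_3$ and the other two strata are empty. Next I would compute the $W_i$ side under the extra hypothesis $a\not\leq\ov w$ (equivalently $a\notin W^{\leq\ov w}$, using the subword property). Recall from Notation~\ref{notation:BruhatPartition} that $W_1=\{w\in W^{\leq\ov wa}:wa\leq w\leq\ov w\}$. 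For such a $w$ we have $wa\leq w\leq\ov w$, so $wa\in W^{\leq\ov w}$; writing $w=(wa)a$ with $wa<w$ shows, via the subword property, that $a\leq w\leq\ov w$, contradicting $a\not\leq\ov w$. Hence $W_1=\emptyset$. By Observation~\ref{obs:BruhatPartitionfacts}, $W_2=m_a(W_1)=\emptyset$ as well. Then $W_3=W^{\leq\ov w}\setminus(W_1\cup W_2)=W^{\leq\ov w}$. This precisely matches the algebra side: $\nabla(W_1)=\nabla(\emptyset)=\emptyset=\P_1$, $\nabla(W_2)=\emptyset=\P_2$, and $\nabla(W_3)=\nabla(W^{\leq\ov w})=\spec_HR=\P_3$.

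With the partition hypothesis verified, Theorem~\ref{thm:main} applies and yields a poset isomorphism $\tilde\nabla:W^{\leq\ov wa}\to\spec_HS$, which is exactly the desired conclusion $W^{\leq\ov wa}\cong\spec_HS$. The only points requiring care are the two translations between the hypothesis $a\not\leq\ov w$ and the vanishing of $W_1$, and between $\delta=0$ and the vanishing of $\P_1,\P_2$; both are short arguments using the subword property and the definitions of the strata respectively.

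I expect the main (though still mild) obstacle to be confirming that $S=R[x;\tau]$ with $\delta=0$ genuinely qualifies as a Cauchon extension so that the machinery of Section~\ref{section:CGLPrelims} is available, and that $\tau$ acts diagonalizably on $R$ as required by Theorem~\ref{thm:main}. When $\delta=0$, condition (ii) of Definition~\ref{dfn:CauchonExt} ($\delta\tau=q\tau\delta$) and condition (i) (local nilpotence) hold trivially, so the Cauchon-extension hypothesis is automatic given the torus action carried over from the context. The diagonalizability of $\tau$ should be inherited from the standing assumption that $W^{\leq\ov w}\cong\spec_HR$ arises from a CGL-type setup; if it is not automatic, it would need to be added as a hypothesis, but in the intended applications it holds. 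Once these structural checks are in place, the proof is essentially a one-line invocation of Theorem~\ref{thm:main}.
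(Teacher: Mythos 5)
Your proposal is correct and takes essentially the same route as the paper: the paper checks hypotheses (H1) and (H2) of Theorem~\ref{thm:cool} directly (noting $W_1=W_2=\emptyset$ renders (H2) vacuous, while $W_3=W^{\leq\ov w}$ and $(\spec_HR)_{\supseteq\delta(R)}=\spec_HR$ give (H1)), whereas you route through Theorem~\ref{thm:main}, whose partition hypothesis collapses to exactly the same trivial verifications. The diagonalizability issue you flag is a formal mismatch present in the paper's own proof too, but it is harmless here since with $\delta=0$ the results that need it (e.g.\ Lemma~\ref{lemma:CGL6} and Proposition~\ref{lemma:JcapR}) hold trivially.
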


\begin{proof}

Notice $a\not\in W^{\leq\ov w}$ is equivalent to saying $\ov w\in W_a'$.

Suppose $\nabla:W^{\leq\ov w}\to\spec_HR$ is an isomorphism.  Notice that $W_1=W_2=\emptyset$, so hypothesis (H2) of Theorem~\ref{thm:cool} is satisfied trivially.  Thus $W^{\leq\ov w}=W_3$; meanwhile, $\spec_HR=(\spec_HR)_{\supseteq\delta}$, so hypothesis (H1) is satisfied.  Hence, $\nabla$ extends to an isomorphism $\tilde\nabla:W^{\leq\ov wa}\to\spec_HR$ by Theorem~\ref{thm:cool}.
\end{proof}

Notice in Proposition~\ref{prop:delta0}, that $\tilde\nabla(a)=\langle x\rangle_S$ and $\tilde\nabla(wa)=\nabla(w)_x$ for any $w\in W^{\leq\ov w}.$\label{remark:delta0}

The following corollary is a well-known result, but notice that it is an immediate consequence of Proposition~\ref{prop:delta0}, which in turn can follow from Theorem~\ref{thm:main}.

\begin{eg}Let $W$ be a Coxeter group with generating set $\{s_1,\dots,s_n\}$.  Let $c=s_1\dots s_n$ be a Coxeter element.  Then $W^{\leq c}\cong\spec_H\O_{\mathbf q}(k^n)$ for any multiplicatively antisymmetric matrix $\mathbf q$.\label{cor:quantumaffinespec}\end{eg}

Recall from Observation~\ref{obs:BruhatPartitionfacts} that $W_1=m_a(W_2)$.

\begin{eg}We demonstrate Theorem~\ref{thm:main} on the multiparameter $2\times 2$ quantum matrix algebra.

Let $W$ be the Coxeter group of type $A_3$.

 Set $R=\O_{\q}(k^3)$ for some multiplicatively antisymmetric matrix $\q=(q_{ij})$.   Suppose $q$ is not a root of unity and let $p_2,p_3, \gamma\in k^\times$.  Let $S=R[x;\tau,\delta]$ be a Cauchon extension where $\delta(x_1)=\gamma x_2x_3$, $\tau(x_1)=qp_1p_2x_1$ and for $i=2,3$, we have $\tau(x_i)=p_ix_i$ and $\delta(x_i)=0$.    Set $a=s_2$ and $\ov w=s_2s_1s_3$.  We have that $W^{\leq\ov w}\cong\spec_HR$ via a canonical map $\nabla$, as implicitly defined by Corollary~\ref{cor:quantumaffinespec}.  Here $W_2=\{1\}$, where $1$ denotes the identity element of $W$.  $W_1=\{s_2\}$.  $W_3$ consists of the remaining six words which can be seen as the upper set generated by $\{s_1,s_3\}$.

Notice that $(\spec_HR)_{\supseteq\delta(R)}$ is the upper set generated by the ideals $\langle x_2\rangle$ and $\langle x_3\rangle$.  Notice that $\nabla(s_1)=\langle x_2\rangle$ and $\nabla(s_3)= \langle x_3\rangle$.  We thus see that $\nabla(W_3)=\P_3$.

We next notice that $\nabla(1)=0$ is $\delta$-invariant, and $\nabla(s_2)=\langle x_1\rangle$ is not, so $\nabla(W_2)=\P_2$.  It follows that $\nabla(W_1)=\P_1$.

We thus conclude there is a poset isomorphism $W^{\ov wa}\to\spec_HS.$
\label{eg:app2qmatrices}\end{eg}

\begin{dfn}
Let $Q=(q_1,\dots,q_n)$ be a vector in $(k^\times)^n$, and let $\Gamma=(\gamma_{ij})$ be a multiplicatively antisymmetric matrix over $k$.  The multiparameter quantized Weyl algebra $Y_n=Y_n^{Q,\Gamma}(k)$ is presented by generators $x_n,\dots,x_1,y_1,\dots,y_n$ and relations \begin{align*}
y_iy_j&=\gamma_{ij}y_jy_i&\forall\ i,j\\
x_iy_j&=\gamma_{ji}y_jx_i&(i<j)\\
x_iy_j&=q_j\gamma_{ji}y_jx_i&(i>j)\\
x_ix_j&=q_i\gamma_{ij}x_jx_i&(i<j)\\
x_iy_i&=q_iy_ix_i+1+\sum_{l<i}(q_l-1)y_lx_l&\forall\ i
\end{align*}
\end{dfn}

\begin{prop}
Let $Y_n$ be as above.  Let $W$ be the Coxeter group of type $A_n$.  Set $w_n=s_ns_{n-1}\cdots s_2s_1s_2\cdots s_n.$  Then $W^{\leq w_n}\cong\spec_HY_n$.
\label{eg:app2weylalg}
\end{prop}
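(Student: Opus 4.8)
The plan is to apply Theorem~\ref{thm:main} inductively, peeling off one variable of $Y_n$ at a time and matching the attachment of each new generator to the extension of the Bruhat interval by the corresponding reflection. The multiparameter quantized Weyl algebra $Y_n$ is built as an iterated Ore extension, and the key structural observation is that $Y_n$ arises from $Y_{n-1}$ by first adjoining $y_n$ and then $x_n$, each as a Cauchon extension with $\tau$ acting diagonalizably (the defining relations show $Y_{n-1}$ is $\tau$-stable and that $y_n,x_n$ are eigenvectors; local nilpotence of the skew derivations follows from the bracket relations, and the required torus $H$ and element $h_0$ come from the standard rational $(k^\times)$-action scaling the generators). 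Correspondingly, I would factor the word $w_n = s_n s_{n-1}\cdots s_2 s_1 s_2 \cdots s_n$ as $w_{n-1}$ flanked by two copies of $s_n$, i.e. $w_n = s_n\, w_{n-1}'\, s_n$ where the inductive hypothesis gives $W^{\leq w_{n-1}}\cong \spec_H Y_{n-1}$. The base case $n=1$ is $Y_1 = k[y_1][x_1;\tau,\delta]$ with $x_1 y_1 = q_1 y_1 x_1 + 1$, a quantum Weyl algebra whose $H$-spectrum is a four-element poset matching $W^{\leq s_1 s_1}$ in type $A_1$; this can be checked directly or via Theorem~\ref{thm:main} applied to $k[y_1]$.

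The core of the induction is to verify, at each of the two attachment steps, the three conditions $\nabla(W_i) = \P_i$ for $i=1,2,3$ demanded by Theorem~\ref{thm:main}. First I would treat the adjunction of $y_n$: since $y_n$ is a pure skew-polynomial variable ($\delta = 0$ on this step), Proposition~\ref{prop:delta0} applies provided the relevant generator $a$ satisfies $a\not\leq \ov w$, so that $W_1 = W_2 = \emptyset$ and the whole interval is $W_3$; this extends the isomorphism painlessly. The substantive step is the adjunction of $x_n$, where $\delta$ is genuinely nonzero (the relation $x_n y_n = q_n y_n x_n + 1 + \sum_{l<n}(q_l-1)y_l x_l$ forces a nontrivial skew derivation). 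Here I must identify the three posets $\P_1,\P_2,\P_3$ concretely: $\P_3 = (\spec_H Y_{n-1}')_{\supseteq \delta(Y_{n-1}')}$ is the upper set of primes containing the image of $\delta$, $\P_2$ collects the $\delta$-invariant primes not in $\P_3$, and $\P_1$ is everything else. Matching these against the Bruhat partition $W_1 = W_a \cap W^{\leq \ov w}$, $W_2 = m_a(W_1)$, $W_3 = \{w : wa \not\leq \ov w\}$ requires an explicit description of the torus-invariant primes of the intermediate algebra and of the $\delta$-action on them.

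The main obstacle I anticipate is precisely this combinatorial bookkeeping: showing that the $\delta$-invariance structure on $\spec_H$ of the intermediate algebra lines up exactly with the Bruhat-order relation $wa \leq w$ versus $wa \not\leq \ov w$. Concretely, I would need to compute which torus-invariant primes $P$ satisfy $\delta(P)\subseteq P$ and which satisfy $\delta(Y')\subseteq P$, and then confirm via the canonical isomorphism $\nabla$ (coming from the inductive hypothesis, and understood explicitly enough through the Cauchon-diagram / restricted-permutation description of $\spec_H$ of quantized matrix-type algebras) that these conditions correspond under $\nabla$ to the defining conditions of $W_1,W_2,W_3$. The tractable route is to use the commuting-square compatibility with the contraction map $\tilde\Psi$ and the projection $\Phi$ established in Theorem~\ref{thm:cool} and Corollary~\ref{cor:projections}: this lets me verify (H2), $\tilde C\nabla(w)\cap R = \nabla\Phi(w)$ for $w\in W_1\cup W_2$, rather than re-deriving every prime from scratch. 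Once both attachment steps are checked, two applications of Theorem~\ref{thm:main} (one trivial via Proposition~\ref{prop:delta0}, one substantive) upgrade $W^{\leq w_{n-1}}\cong \spec_H Y_{n-1}$ to $W^{\leq w_n}\cong \spec_H Y_n$, completing the induction.
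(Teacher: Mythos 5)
Your overall strategy---induct on $n$, handle one adjoined variable with Proposition~\ref{prop:delta0} (the derivation-free step) and the other with Theorem~\ref{thm:main}---is the same as the paper's, and your choice to adjoin $y_n$ first and then $x_n$ (the reverse of the paper's order, which adjoins $x_n$ as the pure skew step and puts the derivation on the $y_n$ step) is a harmless symmetric variant. But there are two genuine problems. First, your base case is wrong: $w_1=s_1$, and $\spec_HY_1$ is a \emph{two}-element chain $0\subset\langle\Omega_1\rangle$, not a four-element poset; the expression $W^{\leq s_1s_1}$ is meaningless since $s_1s_1=1$. The four-element poset is $\spec_H$ of the quantum plane; in $Y_1$ the relation $x_1y_1=q_1y_1x_1+1$ gives $\delta(y_1)=1$, so no $H$-prime of $k[y_1]$ contains $\delta(k[y_1])$, and $(\spec_HY_1)_{\ni x_1}=\emptyset$. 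For the same reason, Theorem~\ref{thm:main} can \emph{never} produce $Y_1$ from $k[y_1]$: the theorem requires $\ov w\in W_a'$ and outputs the strictly larger poset $W^{\leq\ov wa}\supsetneq W^{\leq\ov w}$, whereas $\spec_HY_1$ and $\spec_Hk[y_1]$ have the same cardinality. This is exactly why the paper disposes of $n=1$ by citation rather than by the machine, and an incorrect base case (of the wrong height) would wreck the entire induction.

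Second, your inductive hypothesis is too weak to verify $\nabla(W_i)=\P_i$ at the next stage. A bare poset isomorphism $W^{\leq w_{n-1}}\cong\spec_HY_{n-1}$ tells you nothing about which ideals are $\delta$-invariant or contain $\delta(R)$; you need to know where specific ideals sit. The paper strengthens the induction to carry explicit data: $\nabla_l(s_l)=\langle\Omega_l\rangle$ where $\Omega_l=C(x_l)y_l$, and $\nabla_l(w)\subseteq Y_{l-1}$ for all $w\leq w_{l-1}$. These two facts are precisely what make the verification possible: after the derivation-free step (with the variables reordered so the new variable comes first and $\ov w=s_nw_{n-1}$), $\P_3$ is the upper set generated by $\langle\Omega_{n-1}\rangle$, matching $W_3$, the upper set generated by $s_{n-1}$; $\nabla(W_2)\subseteq\P_2$ because those primes lie in $Y_{n-1}$, which $\delta$ annihilates; and $\nabla(W_1)\subseteq\P_1$ because $\Omega_{n-1}$ lies in $\delta$ applied to $\nabla_{n-1}(w)_{x_n}$ but not in $\nabla_{n-1}(w)_{x_n}$ itself. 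Your proposed substitutes do not fill this hole: the ``Cauchon-diagram / restricted-permutation description of quantized matrix-type algebras'' does not apply to $Y_n$ (it is not a quantum matrix algebra, and the paper's corollary shows it is not even a De Concini-Kac-Procesi algebra), and the commuting square of Corollary~\ref{cor:projections} is a \emph{consequence} of the theorem you are trying to invoke, not an independent tool for checking its hypotheses (H1) and (H2). To repair the proof you must formulate the strengthened hypothesis, establish it at $n=1$, and propagate it through each inductive step.
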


 \begin{proof}

 Notice $Y_n\cong Y_{n-1}[x_n;\sigma_n][y_n;\tau_n,\delta_n]$ where
\begin{align*}\sigma_n&:\begin{cases}
 x_i\mapsto q_i\inv\gamma_{ni}x_i,&(i<n)\\
 y_i\mapsto q_i\gamma_{in}y_i,&(i<n),
 \end{cases}\\
  \tau_n&:\begin{cases}
 x_i\mapsto \gamma_{in}x_i,&(i<n)\\
 x_n\mapsto q_n\inv x_n\\
 y_i\mapsto \gamma_{ni}y_i,&(i<n),
 \end{cases}\\ \delta_n&:\begin{cases}
 y_i\mapsto0,&(i<n)\\
 x_i\mapsto0,&(i<n)\\
 x_n\mapsto-q_n\inv\left(1+\sum_{l<n}(q_l-1)y_lx_l\right).
 \end{cases}\end{align*}

We set $R=Y_{n-1}[x_n;\sigma_n]$ with the variables reordered so that $x_n$ is first (noticing we still have a CGL extension under this ordering of the variables), and write $Y_n=R[y_n;\tau_n,\delta_n]$.  Set $\Omega_n=C(x_n)y_n$.  We observe that $\langle\Omega_n\rangle\in\spec_HY_n$.

We proceed to show by induction on $n$ that there is an isomorphism $$\nabla_n:W^{\leq w_n}\to\spec_HY_n$$ such that $s_n\mapsto\langle\Omega_n\rangle$ and $\nabla_n(w)\subseteq Y_{n-1}$ for all $w\leq w_{n-1}$.

Notice $w_1=s_1$, and the claim for $n=1$ is a well-known result (if we define $Y_0=k$ $w_0=1$).  See \cite{G92}

Suppose inductively we have constructed isomorphisms $\nabla_l=\widetilde\nabla_{l-1}:W^{\leq w_l}\to \spec_HY_l$ as in Theorem~\ref{thm:cool} for $2\leq l\leq n-1$ such that $\nabla_l(s_l)=\langle\Omega_l\rangle$ and $\nabla_l(w)\subseteq Y_{l-1}$ for all $w\leq w_{l-1}$.

We see from Proposition~\ref{prop:delta0} that $\spec_HR$ lines up with the Bruhat interval corresponding to the word $w_{n-1}s_n$.  It is not difficult to see, that if we order the variables with $x_n$ first, and set $\ov w=s_nw_{n-1}$, then there is a poset isomorphism $\nabla:W^{\leq\ov w}\to\spec_HR$ such that $$w\mapsto\nabla_{n-1}(w)\text{ and }s_nw\mapsto\nabla_{n-1}(w)_{x_n}\text{ for all }w\leq w_{n-1}.$$

Then $(\spec_HR)_{\supseteq\delta(R)}$ is the upper set generated by $\langle\Omega_{n-1}\rangle$.  $W_3$ is the upper set generated by $s_{n-1}$, and $\nabla(s_{n-1})=\langle\Omega_{n-1}\rangle$.  We thus see that $\nabla(W_3)=\P_3$.

We see $W_2=W^{\leq w_{n-2}}$ and $W_1=\{s_nw:w\in W_2\}$.  We notice that $\nabla(s_n)=\langle x_n\rangle$.  Notice $\nabla(W_2)\subseteq\spec_H Y_{n-1}$ and that $\delta(Y_{n-1})=0$.  We conclude $\nabla(W_2)\subseteq\P_2$.

Next notice that $\nabla(s_nw)=\nabla_{n-1}(w)_{x_n}$ for all $w\leq w_{n-2}$.  Thus $\Omega_{n-1}\in\delta\nabla(s_nw)$.  But $\Omega_{n-1}\not\in\nabla(s_nw)$.  We conclude $\nabla(W_i)=\P_i$ for $i=1,2,3.$

We then see we have an isomorphism $\tilde\nabla:W^{\leq w_n}\to\spec_HY_n$ as in Theorem~\ref{thm:main}.

We then see that $$\widetilde\nabla(s_n)=\langle C(x_n)\rangle_L\cap Y_n=\langle\Omega_n\rangle_L\cap Y_n.$$

This equals $\langle\Omega_n\rangle_S$ since $\langle\Omega_n\rangle_S$ is prime.

Set $\nabla_n=\widetilde\nabla.$  We have then established that $\nabla_n(s_n)=\langle\Omega_n\rangle.$

Suppose $w\leq w_{n-1}$.  Then $\nabla_n(w)=\tilde{C}\nabla(w)=\iota\nabla(w)$, since $\nabla(w)\in Y_{n-1}$ and $\delta(Y_{n-1})=0$.  This completes the induction.
\end{proof}

See Figure~\ref{figure:32123} for an illustration of the $n=3$ case.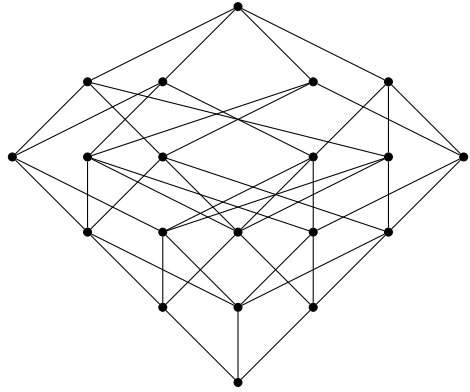
\begin{figure}
\begin{center}
\begin{tikzpicture}
[vertex/.style={circle,draw,fill,inner sep=0 pt, minimum size=3},scale=.5]
\node(empty) at (0,0)[vertex]{};

\node(1) at (-2,2)[vertex]{};
\node(2) at (0,2)[vertex]{};
\node(0) at (2,2)[vertex]{};

\node(12) at (-4,4)[vertex]{};
\node(21) at (-2,4)[vertex]{};
\node(10) at (0,4)[vertex]{};
\node(20) at (2,4)[vertex]{};
\node(02) at (4,4)[vertex]{};

\node(212) at (-6,6)[vertex]{};
\node(012) at (-2,6)[vertex]{};
\node(120) at (-4,6)[vertex]{};
\node(210) at (2,6)[vertex]{};
\node(021) at (4,6)[vertex]{};
\node(020) at (6,6)[vertex]{};

\node(0212) at (-4,8)[vertex]{};
\node(2120) at (-2,8)[vertex]{};
\node(0210) at (4,8)[vertex]{};
\node(0120) at (2,8)[vertex]{};

\node(02120) at (0,10)[vertex]{};

\draw(empty)--(1);
\draw(empty)--(2);
\draw(empty)--(0);

\draw(1)--(12);
\draw(1)--(21);
\draw(1)--(10);
\draw(2)--(20);
\draw(2)--(02);
\draw(2)--(21);
\draw(2)--(12);
\draw(0)--(02);
\draw(0)--(10);
\draw(0)--(20);

\draw(10)--(012);
\draw(10)--(021);
\draw(10)--(120);
\draw(10)--(210);

\draw(12)--(120);
\draw(12)--(212);
\draw(12)--(012);
\draw(21)--(210);
\draw(21)--(212);
\draw(21)--(021);
\draw(20)--(020);
\draw(20)--(210);
\draw(20)--(120);
\draw(02)--(020);
\draw(02)--(021);
\draw(02)--(012);

\draw(120)--(2120);
\draw(120)--(0120);
\draw(210)--(2120);
\draw(210)--(0210);
\draw(212)--(2120);
\draw(212)--(0212);
\draw(012)--(0212);
\draw(012)--(0120);
\draw(021)--(0212);
\draw(020)--(0210);
\draw(020)--(0120);
\draw(021)--(0210);

\draw(0212)--(02120);
\draw(0120)--(02120);
\draw(0210)--(02120);
\draw(2120)--(02120);

\end{tikzpicture}
\end{center} 
\caption{The interval $[1,s_3s_2s_1s_2s_3]$ in the Coxeter group of type $A_3$, isomorphic to the invariant prime spectrum of the quantized Weyl algebra in 6 variables.}\label{figure:32123}
\end{figure}

\begin{cor}
$Y_n$ is not a De Concini-Kac-Procesi algebra.
\end{cor}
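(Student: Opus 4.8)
The plan is to exhibit two numerical invariants of $Y_n$ that must coincide for every De Concini-Kac-Procesi algebra but that disagree for $Y_n$. The first is the Gelfand-Kirillov dimension: since $Y_n$ is an iterated Ore extension over $k$ in the $2n$ variables $x_n,\dots,x_1,y_1,\dots,y_n$, we have $\operatorname{GKdim}(Y_n)=2n$. The second is the rank of the poset $\spec_HY_n$, that is, the length of a maximal chain. By Proposition~\ref{eg:app2weylalg} we have $\spec_HY_n\cong W^{\leq w_n}$ with $W$ of type $A_n$ and $w_n=s_ns_{n-1}\cdots s_2s_1s_2\cdots s_n$; since the displayed expression for $w_n$ is a reduced word of length $2n-1$, the chain property shows that $W^{\leq w_n}$ is graded with rank $\ell(w_n)=2n-1$. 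Thus $\spec_HY_n$ has rank $2n-1$, which by Propositions~\ref{prop:independentoftorus} and~\ref{prop:independt2} is an invariant of the algebra.

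Next I would record that these two invariants necessarily coincide for a De Concini-Kac-Procesi algebra $U_q^{w'}$ attached to a Weyl group $W'$ and a word $w'\in W'$. On one hand, $U_q^{w'}$ is a CGL extension, indeed an iterated Ore extension, on $\ell(w')$ generators, namely the PBW root vectors indexed by the inversion set of $w'$; hence $\operatorname{GKdim}(U_q^{w'})=\ell(w')$. On the other hand, Yakimov's Theorem~\ref{thm:Yaki} gives $\spec_H(U_q^{w'})\cong (W')^{\leq w'}$, whose rank is again $\ell(w')$ by the chain property. Hence for any De Concini-Kac-Procesi algebra the Gelfand-Kirillov dimension equals the rank of its $H$-prime poset.

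Finally, suppose toward a contradiction that $Y_n$ is a De Concini-Kac-Procesi algebra, say $Y_n\cong U_q^{w'}$. Comparing Gelfand-Kirillov dimensions gives $\ell(w')=2n$, while comparing poset ranks, using that $\spec_H$ is an invariant so that $\spec_HY_n\cong\spec_HU_q^{w'}\cong (W')^{\leq w'}$, gives $\ell(w')=2n-1$. These are incompatible, and the result follows.

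I expect the only genuinely delicate point to be the bookkeeping needed to treat the rank of $\spec_H$ as an honest algebra invariant: one must invoke Propositions~\ref{prop:independentoftorus} and~\ref{prop:independt2} so that an algebra isomorphism $Y_n\cong U_q^{w'}$ transports $\spec_HY_n$ to $\spec_HU_q^{w'}$ as posets, independently of the chosen maximal tori. Verifying that $w_n$ is reduced, so that $\ell(w_n)=2n-1$, is routine in type $A_n$, and the Gelfand-Kirillov computations are standard for iterated Ore extensions.
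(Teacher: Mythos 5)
Your proposal is correct and follows essentially the same route as the paper: both compare the Gelfand--Kirillov dimension ($2n$ for $Y_n$, $\ell(w')$ for any $U_q^{w'}$) against the height of the $H$-prime poset ($2n-1$ for $Y_n$ via Proposition~\ref{eg:app2weylalg}, $\ell(w')$ for $U_q^{w'}$ via Theorem~\ref{thm:Yaki}), using Propositions~\ref{prop:independentoftorus} and~\ref{prop:independt2} to treat the poset as an algebra invariant. Your write-up is somewhat more explicit than the paper's (checking that $w_n$ is reduced and spelling out the contradiction), but there is no substantive difference.
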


\begin{proof}
Let $U_q^w$ be a De Concini-Kac-Procesi algebra for a word $w$ in a Weyl group $W$.  Let $m$ be the length of $w$.  Then $U_q^w$ is a CGL extension in $m$ variables, and so has Gelfand-Kirilov (GK) dimension $m$.

It follows from Theorem~\ref{thm:Yaki} that the height of $\spec_H(U_q^w)$ is also $m$.  We recall from Proposition~\ref{prop:independt2} that $\spec_H(U_q^w)$ is an invariant of the filtered algebra.

That is, every De Concini-Kac-Procesi algebra has a torus-invariant prime spectrum with height equal to its GK-dimension.

$Y_n$ is a CGL extension in $2n$ variables, and so has GK-dimension $2n$; however, the height of $\spec_H(Y_n)$ is $2n-1$.
\end{proof}

The Horton algebra was first described by Horton in \cite{H01} to capture in one family the algebras quantum euclidean space and quantum symplectic space.  Quantum euclidean space is known to be isomorphic to a De Concini-Kac-Procesi algebra, but quantum symplectic space is not.  Moreover, quantum symplectic space is not isomorphic to a 2-cocycle twist of quantum euclidean space.

\begin{dfn}
Let $P,Q\in (k^\times)^n$, $P=(p_1,\dots,p_n)$
and $Q=(q_1,\dots,q_n)$ such that $p_iq_i\inv\neq\sqrt[\bullet]1$ for all $i$.
Suppose $\Gamma=(\gamma_{ij})\in M_n(k^\times)$ is a multiplicatively
antisymmetric matrix.
The Horton algebra $K_n=K_{n,\Gamma}^{P,Q}(k)$ is an
algebra presented by generators $x_n,\dots,x_1,y_1,\dots,y_n$ and
relations
\begin{align*}
y_iy_j&=\gamma_{ij}y_jy_i&\forall\ i,j\\
x_iy_j&=p_j\gamma_{ji}y_jx_i&(i<j)\\
x_iy_j&=q_j\gamma_{ji}y_jx_i&(i>j)\\
x_ix_j&=q_ip_j\inv\gamma_{ij}x_jx_i&(i<j)\\
x_iy_i&=q_iy_ix_i+\sum_{l<i}(q_l-p_l)y_lx_l&\forall\ i
\end{align*}
\end{dfn}
The torus-invariant prime spectrum of quantum euclidean space, denoted $\O_q(\mathfrak o(k^{2n})$ was described by Oh and Park in \cite{OP98}.  Their work was generalized to the family $K_n$ by Horton \cite{H01}.  In particular, it follows from Horton's work that all members of the family have isomorphic invariant prime spectra.  Quantum euclidean space was realized as a De Concini-Kac-Procesi algebra for $w_n$ by Kamita\cite{K00}.  It then follows from Yakimov \cite{Y09} that $\spec_H(\O_q(\mathfrak o(k^{2n})))\cong W^{\leq w_n}$, and hence that $\spec_HK_n\cong W^{\leq w_n}$.

However, this is also an easy application of Theorem~\ref{thm:main} in a manner identical  to the proof of Proposition~\ref{eg:app2weylalg}.

\begin{prop}Let $K_n$ be as above and $W$ be the Weyl group of type $D_{n+1}$.

Set $w_l=s_l\dots s_2s_1s_0s_2\cdots s_l$ for $l=2\dots n$.

Then $\spec_HK_n\cong W^{\leq w_n}$ for $n\geq 2$.  See Figure~\ref{figure:K3}.\label{prop:specKn}\end{prop}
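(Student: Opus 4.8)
The plan is to run the same inductive machine used for the quantized Weyl algebra in Proposition~\ref{eg:app2weylalg}, since $K_n$ admits an entirely parallel iterated Ore presentation. First I would verify
\[
K_n\cong K_{n-1}[x_n;\sigma_n][y_n;\tau_n,\delta_n],
\]
where $\sigma_n,\tau_n$ are the diagonal automorphisms forced by the commutation relations and $\delta_n$ is the locally nilpotent skew derivation determined by $\delta_n(x_n)=-q_n\inv\sum_{l<n}(q_l-p_l)y_lx_l$ and $\delta_n(z)=0$ for every other generator $z$. The only structural change from the Weyl case is the absence of the constant term $1$ and the replacement of $q_l-1$ by $q_l-p_l$; in particular $\delta_n$ annihilates all of $K_{n-1}$, so $\delta_n^2(x_n)=0$, $\drank(x_n)=1$, and the normal element $\Omega_n:=C(x_n)y_n$ is again a well-defined quadratic with $\langle\Omega_n\rangle\in\spec_HK_n$. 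Because $K_n$ is a CGL extension, each such single-variable step is a Cauchon extension with $\tau_n$ acting diagonalizably on $R:=K_{n-1}[x_n;\sigma_n]$, so the hypotheses of Theorem~\ref{thm:main} are available.

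I would then prove by induction on $n$ that there is an isomorphism $\nabla_n\colon W^{\leq w_n}\to\spec_HK_n$ with $\nabla_n(s_n)=\langle\Omega_n\rangle$ and $\nabla_n(w)\subseteq K_{n-1}$ for all $w\leq w_{n-1}$, exactly as in Proposition~\ref{eg:app2weylalg}. For the base case I would take $K_1=\O_\q(k^2)$, the quantum plane on $x_1,y_1$ (the correction sum in the defining relation is empty), and set $w_1:=s_1s_0$. Since $s_0$ and $s_1$ are the two \emph{non-adjacent} fork nodes of the $D_{n+1}$ diagram they commute, so $W^{\leq s_1s_0}$ is the Boolean lattice on $\{s_0,s_1\}$, which Example~\ref{cor:quantumaffinespec} identifies with $\spec_HK_1$. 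This same commutativity makes $w_{n-1}=s_{n-1}\cdots s_2s_1s_0s_2\cdots s_{n-1}$ a palindromic group element, i.e. $w_{n-1}\inv=w_{n-1}$; I would record this, as it is what lets the variable-reordering step go through.

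For the inductive step, assume $\nabla_{n-1}\colon W^{\leq w_{n-1}}\to\spec_HK_{n-1}$. Since $s_n\not\leq w_{n-1}$ by the subword property, Proposition~\ref{prop:delta0} applied to the derivation-free extension $R=K_{n-1}[x_n;\sigma_n]$ gives $\spec_HR\cong W^{\leq w_{n-1}s_n}$; composing with the Bruhat automorphism $w\mapsto w\inv$ and using $w_{n-1}\inv=w_{n-1}$ produces $\nabla\colon W^{\leq\ov w}\to\spec_HR$ for $\ov w=s_nw_{n-1}$, with $\nabla(w)=\nabla_{n-1}(w)$ and $\nabla(s_nw)=\nabla_{n-1}(w)_{x_n}$ on $w\leq w_{n-2}$. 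Taking $a=s_n$, I would check the partition hypotheses of Theorem~\ref{thm:main} in the notation of \ref{notation:BruhatPartition}: $W_3$ is the up-set of $s_{n-1}$ and $\P_3=(\spec_HR)_{\supseteq\delta(R)}$ is the up-set of $\langle\Omega_{n-1}\rangle=\nabla(s_{n-1})$; $W_2=W^{\leq w_{n-2}}$ lands in the $\delta$-stable primes because $\delta$ kills $K_{n-1}$; and $W_1=m_a(W_2)$ lands in $\P_1$ since $\Omega_{n-1}\in\delta\nabla(s_nw)$ but $\Omega_{n-1}\notin\nabla(s_nw)$. Theorem~\ref{thm:main} then extends $\nabla$ to $\widetilde\nabla\colon W^{\leq w_n}\to\spec_HK_n$, after which one confirms $\widetilde\nabla(s_n)=\langle\Omega_n\rangle$ and $\widetilde\nabla(w)=\iota\nabla(w)\subseteq K_{n-1}$ for $w\leq w_{n-1}$, closing the induction and giving $\spec_HK_n\cong W^{\leq w_n}$.

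The step I expect to be the genuine obstacle is the identification of $\P_3$ as the up-set of $\langle\Omega_{n-1}\rangle$. Since $\delta(R)$ is the two-sided ideal generated by $\Sigma_n=\sum_{l<n}(q_l-p_l)y_lx_l$, this requires showing that the $H$-primes of $R$ containing $\Sigma_n$ are exactly those lying above the quadratic normal element $\Omega_{n-1}$, even though $\Sigma_n$ and $\Omega_{n-1}$ are not scalar multiples of one another. This is precisely the phenomenon already handled in the Weyl-algebra argument, where $\Omega_{n-1}$ likewise controls the vanishing of the corresponding derivation term; I would establish it by the same analysis of the completely-prime $H$-spectrum of $K_{n-1}$ together with the recursion expressing $\Omega_n$ in terms of $\Omega_{n-1}$ and the new degree-two terms $y_nx_n$ and $y_{n-1}x_{n-1}$.
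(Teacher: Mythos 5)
Your overall strategy --- running the induction of Proposition~\ref{eg:app2weylalg} verbatim on the presentation $K_n\cong K_{n-1}[x_n;\sigma_n][y_n;\tau_n,\delta_n]$ and closing each step with Theorem~\ref{thm:main} --- is exactly the paper's route (the paper simply asserts the argument is ``identical'' to the quantized Weyl algebra case). But your proposal has a genuine gap at the bottom of the induction, and it sits precisely where the Horton algebra differs from the Weyl algebra. Your inductive tag includes $\nabla_n(s_n)=\langle\Omega_n\rangle\in\spec_HK_n$, anchored at the base case $n=1$. For $K_1$ (the quantum plane) one has $\delta_1=0$, so $C(x_1)=x_1$ and $\Omega_1=x_1y_1$; the ideal $\langle x_1y_1\rangle$ is \emph{not} prime (it contains the product of the normal elements $x_1$ and $y_1$ but neither factor), so $\langle\Omega_1\rangle\notin\spec_HK_1$ and no isomorphism can send the atom $s_1$ to it. Correspondingly, your description of the step from $n=1$ to $n=2$ fails: for $R=K_1[x_2;\sigma_2]$ one has $\delta_2(x_2)=-q_2\inv(q_1-p_1)y_1x_1$, so $(\spec_HR)_{\supseteq\delta(R)}$ consists of the $H$-primes containing $y_1x_1$, i.e.\ (by complete primeness) those containing $x_1$ \emph{or} $y_1$ --- an upper set with \emph{two} minimal elements, not the upper set of a single prime $\langle\Omega_1\rangle$. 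The Coxeter side mirrors this: with $\ov w=s_2s_1s_0$ and $a=s_2$, the generator $s_0$ does not commute with $s_2$ (they are adjacent in $D_{n+1}$), so $s_0s_2\not\leq\ov w$ and $W_3$ is the upper set generated by $\{s_0,s_1\}$, not by $s_{n-1}=s_1$ alone. Thus $n=2$ must be treated as a separate base case, matching the two minimal elements $s_0,s_1$ of $W_3$ with $\langle x_1\rangle,\langle y_1\rangle$ --- which is exactly why the proposition only defines $w_l$ for $l\geq2$ and only claims the result for $n\geq2$. For $n\geq3$ your description of $W_3$ and $\P_3$ is correct ($s_0$ then commutes with $s_n$, so it drops out of $W_3$) and the induction runs as you say.

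Separately, the ``genuine obstacle'' you flag at the end rests on a false premise: $\Omega_{n-1}$ \emph{is} a scalar multiple of $\Sigma_n$. Writing $q=p_{n-1}q_{n-1}\inv$ for the Cauchon parameter, one computes $C(x_{n-1})=x_{n-1}-\tfrac{q}{q-1}\Sigma_{n-1}y_{n-1}\inv$, hence
\begin{align*}
\Omega_{n-1}&=x_{n-1}y_{n-1}-\tfrac{q}{q-1}\Sigma_{n-1}
=q_{n-1}y_{n-1}x_{n-1}-\tfrac{1}{q-1}\Sigma_{n-1}
=\tfrac{q_{n-1}}{q_{n-1}-p_{n-1}}\,\Sigma_n,
\end{align*}
using $x_{n-1}y_{n-1}=q_{n-1}y_{n-1}x_{n-1}+\Sigma_{n-1}$ and $\Sigma_n=\Sigma_{n-1}+(q_{n-1}-p_{n-1})y_{n-1}x_{n-1}$. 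So the matching of the up-sets is immediate from proportionality; what your argument actually needs to isolate is the \emph{primality} of $\langle\Omega_{n-1}\rangle=\langle\Sigma_n\rangle$ in $K_{n-1}$. That primality holds for $n-1\geq2$ (for instance, $K_{n-1}/\langle\Sigma_n\rangle$ embeds in a localization of a quantum affine space, the noncommutative analogue of the irreducibility of the quadric $\sum_l c_l y_l x_l$ in at least four variables), and it fails exactly at $n-1=1$, where $\Sigma_2$ is the reducible quadric $(q_1-p_1)y_1x_1$. That is the real content separating the Horton case from the Weyl case, and it is invisible in your write-up because the non-proportionality worry points in the wrong direction.
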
\begin{figure}[h]
\begin{tikzpicture}
[vertex/.style={circle,draw,fill,inner sep=0 pt, minimum size=3},scale=.5]
\node(empty) at (0,0)[vertex]{};

\node(1) at (-3,2)[vertex]{};
\node(0) at (-1,2)[vertex]{};
\node(2) at (1,2)[vertex]{};
\node(3) at (3,2)[vertex]{};

\node(12) at (-6,4)[vertex]{};
\node(21) at (-4.5,4)[vertex]{};
\node(10) at (-3,4)[vertex]{};
\node(20) at (-1.5,4)[vertex]{};
\node(13) at (0,4)[vertex]{};
\node(02) at (1.5,4)[vertex]{};
\node(30) at (3,4)[vertex]{};
\node(32) at (4.5,4)[vertex]{};
\node(23) at (6,4)[vertex]{};

\node(212) at (-7,6)[vertex]{};
\node(012) at (-6,6)[vertex]{};
\node(120) at (-5,6)[vertex]{};
\node(130) at (-2,6)[vertex]{};
\node(021) at (-4,6)[vertex]{};
\node(210) at (-3,6)[vertex]{};
\node(132) at (-1,6)[vertex]{};
\node(020) at (1,6)[vertex]{};
\node(213) at (2,6)[vertex]{};
\node(230) at (3,6)[vertex]{};
\node(023) at (4,6)[vertex]{};
\node(320) at (5,6)[vertex]{};
\node(032) at (6,6)[vertex]{};
\node(232) at (7,6)[vertex]{};

\node(0212) at (-6,8)[vertex]{};
\node(2120) at (-5,8)[vertex]{};
\node(0210) at (-4,8)[vertex]{};
\node(0120) at (-3,8)[vertex]{};
\node(1320) at (-2,8)[vertex]{};
\node(2130) at (-1,8)[vertex]{};
\node(0132) at (0,8)[vertex]{};
\node(2132) at (1,8)[vertex]{};
\node(0213) at (2,8)[vertex]{};
\node(0320) at (3,8)[vertex]{};
\node(0230) at (4,8)[vertex]{};
\node(2320) at (5,8)[vertex]{};
\node(0232) at (6,8)[vertex]{};

\node(02120) at (-5,10)[vertex]{};
\node(21320) at (-3,10)[vertex]{};
\node(02130) at (-1,10)[vertex]{};
\node(01320) at (1,10)[vertex]{};
\node(02132) at (3,10)[vertex]{};
\node(02320) at (5,10)[vertex]{};

\node(021320) at (0,12)[vertex]{};

\draw(empty)--(1);
\draw(empty)--(2);
\draw(empty)--(3);
\draw(1)--(12);
\draw(1)--(13);
\draw(1)--(21);
\draw(2)--(21);
\draw(2)--(12);
\draw(2)--(23);
\draw(2)--(32);
\draw(3)--(13);
\draw(3)--(23);
\draw(3)--(32);
\draw(12)--(212);
\draw(12)--(132);
\draw(21)--(212);
\draw(21)--(213);
\draw(13)--(132);
\draw(13)--(213);
\draw(23)--(213);
\draw(23)--(232);
\draw(32)--(232);
\draw(32)--(132);
\draw(212)--(2132);
\draw(132)--(2132);
\draw(213)--(2132);
\draw(232)--(2132);
\draw(0)--(10);
\draw(0)--(20);
\draw(0)--(30);
\draw(10)--(120);
\draw(10)--(130);
\draw(10)--(210);
\draw(20)--(210);
\draw(20)--(120);
\draw(20)--(230);
\draw(20)--(320);
\draw(30)--(130);
\draw(30)--(230);
\draw(30)--(320);
\draw(120)--(2120);
\draw(120)--(1320);
\draw(210)--(2120);
\draw(210)--(2130);
\draw(130)--(1320);
\draw(130)--(2130);
\draw(230)--(2130);
\draw(230)--(2320);
\draw(320)--(2320);
\draw(320)--(1320);
\draw(2120)--(21320);
\draw(1320)--(21320);
\draw(2130)--(21320);
\draw(2320)--(21320);
\draw(empty)--(0);
\draw(1)--(10);
\draw(2)--(20);
\draw(3)--(30);
\draw(21)--(21);
\draw(12)--(12);
\draw(23)--(23);
\draw(32)--(32);
\draw(13)--(13);
\draw(213)--(213);
\draw(212)--(2120);
\draw(232)--(2320);
\draw(132)--(1320);
\draw(2132)--(21320);

\draw(21)--(210);
\draw(32)--(320);
\draw(12)--(120);

\draw(02)--(021);
\draw(02)--(012);
\draw(02)--(023);
\draw(02)--(032);
\draw(012)--(0212);
\draw(012)--(0132);
\draw(021)--(0212);
\draw(021)--(0213);
\draw(023)--(0213);
\draw(023)--(0232);
\draw(032)--(0232);
\draw(032)--(0132);
\draw(0212)--(02132);
\draw(0132)--(02132);
\draw(0213)--(02132);
\draw(0232)--(02132);
\draw(020)--(0210);
\draw(020)--(0120);
\draw(020)--(0230);
\draw(020)--(0320);
\draw(0120)--(02120);
\draw(0120)--(01320);
\draw(0210)--(02120);
\draw(0210)--(02130);
\draw(0230)--(02130);
\draw(0230)--(02320);
\draw(0320)--(02320);
\draw(0320)--(01320);
\draw(02120)--(021320);
\draw(01320)--(021320);
\draw(02130)--(021320);
\draw(02320)--(021320);
\draw(02)--(020);
\draw(021)--(021);
\draw(012)--(012);
\draw(023)--(023);
\draw(032)--(032);
\draw(0213)--(0213);
\draw(0212)--(02120);
\draw(0232)--(02320);
\draw(0132)--(01320);
\draw(02132)--(021320);

\draw(02132)--(2132);
\draw(02320)--(2320);
\draw(01320)--(1320);
\draw(021320)--(21320);
\draw(0212)--(212);
\draw(0213)--(213);
\draw(0232)--(232);
\draw(0320)--(320);
\draw(0230)--(230);
\draw(02130)--(2130);
\draw(02120)--(2120);
\draw(012)--(12);
\draw(02)--(2);
\draw(032)--(32);
\draw(0210)--(210);
\draw(0120)--(120);
\draw(020)--(20);
\draw(021)--(21);
\draw(023)--(23);

\draw(0)--(02);
\draw(10)--(012);
\draw(10)--(021);
\draw(30)--(032);
\draw(30)--(023);
\draw(130)--(0213);
\draw(130)--(0132);

\end{tikzpicture} \caption{$\spec_HK_n$ when $n=3$, which is isomorphic to the Bruhat order interval $[1,s_3s_2s_1s_0s_2s_3]$ in the Coxeter group of type $D_4$, as in Proposition~\ref{prop:specKn}}\label{figure:K3}
\end{figure}
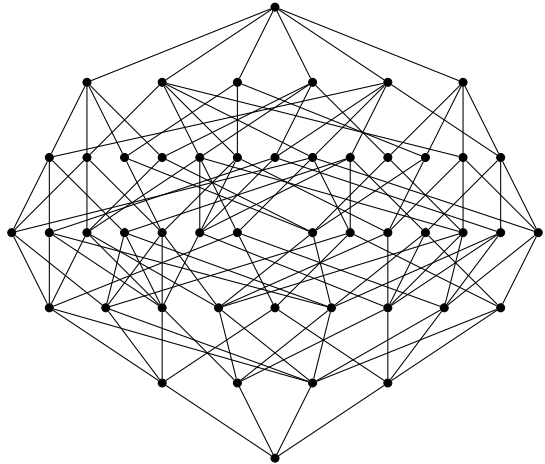

\begin{eg}Set $R=\O_q(M_2(k))$.  The canonical generators of $\O_q(M_2(k))$ are $X_{11},X_{12},X_{21},X_{22}$.  To match CGL notation, we relabel these $x_1,\dots,x_4$ respectively.  We label the quantum determinant $D_q=x_1x_4-qx_2x_3.$

Set $\ov w=s_2s_1s_3s_2$, an element of $W$, the Coxeter group of type $A_3$.  Set $a=s_1$.  We have from Example~\ref{eg:app2qmatrices} a poset isomorphism $\nabla:W^{\leq\ov w}\to\O_q(M_2)$.

Notice $W_3$ is generated as an upper set in $W^{\leq\ov w}$ by $s_3s_2$.  We also have $$W_2=\{1,s_2,s_3,s_2s_3,s_1s_2\}\text{ and }W_1=\{s_1,s_2s_1,s_1s_3,s_2s_1s_3,s_2s_1s_2\}.$$  Let $\gamma\in k^\times$.

There exist an automorphism $\tau$ and $\tau$-derivation $\delta$ on $\O_q(M_2(k))$ such that $$\tau:\begin{cases}
x_1\mapsto qx_1\\
x_2\mapsto qx_2\\
x_3\mapsto q\inv x_3\\
x_4\mapsto q\inv x_4
\end{cases}\text{ and }\delta:\begin{cases}x_1\mapsto \gamma x_3\\
x_2\mapsto \gamma x_4\\
x_3\mapsto 0\\
x_4\mapsto 0
\end{cases}.$$

Thus we have a skew polynomial algebra $S=\O_q(M_2)[x_5;\tau,\delta]$.

There is an action by a torus $H=(k^\times)^3$ such that for $h=(\alpha_1,\alpha_2,\alpha_3)\in H$, $$h.x_i=\alpha_ix_i,\ \text{ for }i=1,\dots,3,\ h.x_4=\alpha_1\inv\alpha_2\alpha_3x_4,\ h.x_5=\alpha_1\inv\alpha_3x_5.$$

$S$ is a Cauchon extension with respect to this torus.

Notice that $D_q$ is central in $S$ and so $\delta(D_q)=0$.

Notice $\delta(R)\subseteq\langle x_3,x_4\rangle=\nabla(s_3s_2)$ and that $s_3s_2$ generates $W_3$.  One easily checks that $\nabla(W_i)=\P_i$ for $i=1,\dots,3$, and thus $W^{\leq\ov wa}\cong\spec_HS$.

See Figure~\ref{figure:21321-A3}.\begin{figure}\begin{tikzpicture}
[vertex/.style={circle,draw,fill,inner sep=0 pt, minimum size=3},scale=.5]
\node(empty) at (0,0)[vertex]{};
\node(1) at (-3,2)[vertex]{};
\node(2) at (0,2)[vertex]{};
\node(3) at (3,2)[vertex]{};
\node(12) at (-2,4)[vertex]{};
\node(21) at (-4,4)[vertex]{};
\node(13) at (0,4)[vertex]{};
\node(32) at (4,4)[vertex]{};
\node(23) at (2,4)[vertex]{};
\node(212) at (-4,6)[vertex]{};
\node(213) at (-2,6)[vertex]{};
\node(132) at (2,6)[vertex]{};
\node(232) at (4,6)[vertex]{};
\node(2132) at (0,8)[vertex]{};
\node(321) at (0,6)[vertex]{};
\node(1321) at (-3,8)[vertex]{};
\node(2321) at (3,8)[vertex]{};
\node(21321) at (0,10)[vertex]{};
\draw(empty)--(1);
\draw(empty)--(2);
\draw(empty)--(3);
\draw(1)--(12);
\draw(1)--(13);
\draw(1)--(21);
\draw(2)--(21);
\draw(2)--(12);
\draw(2)--(23);
\draw(2)--(32);
\draw(3)--(13);
\draw(3)--(23);
\draw(3)--(32);
\draw(12)--(212);
\draw(12)--(132);
\draw(21)--(212);
\draw(21)--(213);
\draw(13)--(132);
\draw(13)--(213);
\draw(23)--(213);
\draw(23)--(232);
\draw(32)--(232);
\draw(32)--(132);
\draw(212)--(2132);
\draw(132)--(2132);
\draw(213)--(2132);
\draw(232)--(2132);
\draw(321)--(2321);
\draw(321)--(1321);
\draw(1321)--(21321);
\draw(2321)--(21321);
\draw(32)--(321);
\draw(132)--(1321);
\draw(232)--(2321);
\draw(2132)--(21321);
\draw(13)--(321);
\draw(21)--(321);
\draw(212)--(1321);
\draw(213)--(2321);
\end{tikzpicture} \caption{The Bruhat order interval $[1,s_2s_1s_3s_2s_1]$ in the Weyl group of type $A_3$, isomorphic to the invariant prime spectrum of $S=\O_q(M_2)[x;\tau,\delta]$ as in Example~\ref{eg:21321-A3}}\label{figure:21321-A3}\end{figure}
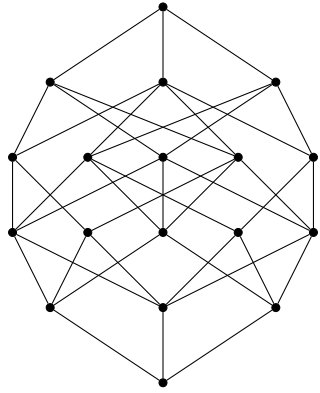
\label{eg:21321-A3}
\end{eg}

\begin{eg}
Set $R=\O_q(M_2(k))$ with generators as in Example~\ref{eg:21321-A3}.  Set $\ov w=s_2s_1s_0s_2$, an element of $W$, the Coxeter group of type $\tilde A_2$, with generators as pictured. \begin{center}\begin{tikzpicture}
[vertex/.style={circle,draw,inner sep=0 pt, minimum size=5}]
\node(1) [vertex][label=above left:$s_1$]{};
\node(2) [vertex][right=of 1,label=above right:$s_2$]{};
\node(0) [vertex] [above right=of 1,label=right:$s_0$]{};


\
\draw(1)--(2);
\draw(0)--(2);
\draw(0)--(1);
\end{tikzpicture} 
\end{center}  Set $a=s_1$.  (Compare to Example~\ref{eg:21321-A3}.  The only difference is the group in which the string of letters is being viewed.)  We have established a poset isomorphism $\nabla:W^{\leq\ov w}\to\O_q(M_2)$.

Notice $W_3$ is generated as in upper set in $W^{\leq\ov w}$ by $s_0$.  We also have $$W_2=\{1,s_2,s_1s_2\}\text{ and }W_1=\{s_1,s_2s_1,s_2s_1s_2\}.$$  Let $\gamma\in k^\times$.

There exist an automorphism $\tau$ and $\tau$-derivation $\delta$ on $\O_q(M_2(k))$ such that $$\tau:\begin{cases}
x_1\mapsto x_1\\
x_2\mapsto qx_2\\
x_3\mapsto q\inv x_3\\
x_4\mapsto x_4
\end{cases}\text{ and }\delta:\begin{cases}x_1\mapsto \gamma x_3^2\\
x_2\mapsto \gamma x_3x_4\\
x_3\mapsto 0\\
x_4\mapsto 0
\end{cases}.$$

Thus we have a skew polynomial algebra $S=\O_q(M_2)[x_5;\tau,\delta]$.

There is an action by a torus $H=(k^\times)^3$ such that for $h=(\alpha_1,\alpha_2,\alpha_3)\in H$, $$h.x_i=\alpha_ix_i,\ \text{ for }i=1,\dots,3,\ h.x_4=\alpha_1\inv\alpha_2\alpha_3x_4,\ h.x_5=\alpha_1\inv\alpha_3^2x_5.$$

$S$ is a Cauchon extension with respect to $H$.

Notice that $D_q$ is central in $S$ and so $\delta(D_q)=0$.

Notice $\delta(R)\subseteq\langle x_3\rangle=\nabla(s_0)$ and that $s_0$ generates $W_3$.  One easily checks that $\nabla(W_i)=\P_i$ for $i=1,\dots,3$, and thus $W^{\leq\ov w}\cong\spec_HS$.

See Figure~\ref{figure:21321-tildeA2}.\begin{figure}\begin{tikzpicture}
[vertex/.style={circle,draw,fill,inner sep=0 pt, minimum size=3},scale=.5]
\node(empty) at (0,0)[vertex]{};
\node(1) at (-3,2)[vertex]{};
\node(2) at (0,2)[vertex]{};
\node(3) at (3,2)[vertex]{};
\node(12) at (-3,4)[vertex]{};
\node(21) at (-5,4)[vertex]{};
\node(13) at (1,4)[vertex]{};
\node(32) at (5,4)[vertex]{};
\node(23) at (3,4)[vertex]{};
\node(212) at (-5,6)[vertex]{};
\node(213) at (-1,6)[vertex]{};
\node(132) at (0,6)[vertex]{};
\node(232) at (3,6)[vertex]{};
\node(2132) at (2,8)[vertex]{};
\node(31) at (-1,4)[vertex]{};
\node(131) at (1,6)[vertex]{};
\node(321) at (5,6)[vertex]{};
\node(231) at (-3,6)[vertex]{};
\node(2131) at (-2,8)[vertex]{};
\node(1321) at (-4,8)[vertex]{};
\node(2321) at (4,8)[vertex]{};
\node(21321) at (0,10)[vertex]{};

\draw(1)--(21);
\draw(21)--(212);

\draw(1)--(12);

\draw(1)--(13);
\draw(21)--(213);
\draw(212)--(2132);

\draw(empty)--(2);
\draw(2)--(12);

\draw(empty)--(1);
\draw(2)--(21);
\draw(12)--(212);

\draw(empty)--(3);
\draw(2)--(23);
\draw(2)--(32);
\draw(12)--(132);

\draw(3)--(13);
\draw(23)--(213);
\draw(32)--(232);
\draw(32)--(132);
\draw(132)--(2132);
\draw(232)--(2132);
\draw(13)--(213);
\draw(3)--(23);
\draw(3)--(32);
\draw(23)--(232);
\draw(13)--(132);
\draw(213)--(2132);

\draw(31)--(131);
\draw(231)--(2131);
\draw(321)--(2321);
\draw(321)--(1321);
\draw(1321)--(21321);
\draw(2321)--(21321);
\draw(131)--(2131);
\draw(31)--(231);
\draw(31)--(321);
\draw(231)--(2321);
\draw(131)--(1321);
\draw(2131)--(21321);

\draw(3)--(31);
\draw(13)--(131);
\draw(32)--(321);
\draw(23)--(231);
\draw(213)--(2131);
\draw(132)--(1321);
\draw(232)--(2321);
\draw(2132)--(21321);

\draw(1)--(31);
\draw(21)--(321);
\draw(21)--(231);
\draw(212)--(1321);

\end{tikzpicture} \caption{The Bruhat order interval $[1,s_2s_1s_0s_2s_1]$ in the affine Weyl group of type $\tilde A_2$, isomorphic to the invariant prime spectrum of $S=\O_q(M_2)[x;\tau,\delta]$ as in Example~\ref{eg:21321-tildeA2}}\label{figure:21321-tildeA2}\end{figure}
\label{eg:21321-tildeA2}
\end{eg}

\bibliographystyle{abbrv}
\bibliography{refs}

\end{document}